\documentclass{amsart}
\newtheorem{theorem}{Theorem}[section]
\newtheorem{lemma}[theorem]{Lemma}
\newtheorem{proposition}[theorem]{Proposition}
\newtheorem{corollary}[theorem]{Corollary}

\newtheorem{criterion}[theorem]{Criterion}
\theoremstyle{definition}

\newtheorem{example}[theorem]{Example}

\usepackage{amscd,amssymb,systeme,bm}
\begin{document}

\title[Varieties of bicommutative algebras]{Varieties of bicommutative algebras\\ with identity of degree three}

\author[Vesselin Drensky, Bekzat Zhakhayev]{Vesselin Drensky, Bekzat Zhakhayev}
\thanks{The second author was supported by the Science Committee of the Ministry of Science and Higher Education of the Republic of Kazakhstan  (Grant No. AP26199089)}
\address{Institute of Mathematics and Informatics, Bulgarian Academy of Sciences, Sofia 1113, Bulgaria}
\email{drensky@math.bas.bg}
\address{SDU University, Abylaikhan Str. 1/1,  040900, Kaskelen, Kazakhstan.\newline
Institute of Mathematics and Mathematical Modeling, Shevchenko Str. 28, 050010, Almaty, Kazakhstan}
\email{bekzat.kopzhasar@gmail.com}

\subjclass[2020]{17A30, 17A50, 20C30.}
\keywords{varieties of bicommutative algebras, lattices of subvarieties, cocharacters.}

\begin{abstract}
The variety of bicommutative algebras is the class of all nonassociative algebras satisfying the polynomial identities
$(x_1x_2)x_3=(x_1x_3)x_2$ and $x_1(x_2x_3)=x_2(x_1x_3)$. In this paper we provide a complete description of varieties of bicommutative algebras
over a field of characteristic zero that satisfy a polynomial identity of degree three.
Furthermore, we establish a sufficient and necessary condition for a variety of bicommutative algebras to have a distributive lattice of subvarieties.
\end{abstract}

\maketitle

\section{Introduction}
The variety of bicommutative algebras $\mathfrak B$ over a field $K$ is a variety of nonassociative algebras defined by the polynomial identities
of right- and left-commutativity
\[
(x_1x_2)x_3=(x_1x_3)x_2,\quad x_1(x_2x_3)=x_2(x_1x_3).
\]
In the sequel we consider algebras over a field $K$ of characteristic 0 only.
The origins of one-sided commutative algebras can be traced back to the paper by Cayley \cite{Ca} in 1857,
see the paper by Dzhumadil'daev, Ismailov, and Tulenbaev \cite{DIT} and our paper \cite{DrZ} for historical details.
The description of the free bicommutative algebra $F({\mathfrak B})$ of countable rank over a field $K$ of arbitrary characteristic
was obtained by Dzhumadil'daev, Ismailov, and Tulenbaev \cite{DIT} and by Dzhumadil'daev and Tulenbaev \cite{DT}.
There the authors constructed an explicit basis of $F({\mathfrak B})$ as a vector space over $K$.
This allowed them to compute the Hilbert (or Poincar\'e) series of the free algebra $F_d({\mathfrak B})$ of finite rank $d$,
the cocharacter (in characteristic 0) and the codimension sequences $\chi_n({\mathfrak B})$ and $c_n({\mathfrak B})$, $n=1,2,\ldots$,
and to show that the exponent $\lim_{n\to\infty}(c_n({\mathfrak B}))^{1/n}$ exists and equals 2.
In \cite{DrZ} we showed that bicommutative algebras have many properties similar to those of commutative algebras
and established that varieties of bicommutative algebras over a field of a characteristic $p\geq 0$ have finite bases of their polynomial identities.
The paper \cite{Dr3} by one of the authors of the present paper studied in detail the subvarieties of $\mathfrak B$ and their numerical invariants in characteristic 0.
In particular, the description of the varieties generated by a two-dimensional bicommutative algebra was given
and it was shown that $\mathfrak B$ is generated by one of these two-dimensional algebras.

The main purpose of our paper is to give a complete description of the varieties of bicommutative algebras satisfying a polynomial identity of degree 3.
Under the natural action of the symmetric group $S_n$ the multilinear component $P_n({\mathfrak B})$ of degree $n$ is a direct sum of the irreducible $S_n$-modules $M(\lambda)$,
where $\lambda$ is a partition of $n$, participating with multiplicity $m(\lambda)$ given in \cite{DIT}:
\[
P_n({\mathfrak B})=\bigoplus_{\lambda\vdash n}m(\lambda)M(\lambda),\,\lambda=(\lambda_1,\lambda_2),
\]
\begin{equation}\label{multiplicities of cocharacters of B}
m(\lambda)=\begin{cases}
1,\lambda=(1);\\
n-1,\lambda=(n),n>1;\\
n-2\lambda_2+1, \lambda=(\lambda_1,\lambda_2), \lambda_2>0.
\end{cases}
\end{equation}
In the early 1980s Berele \cite{B} and Drensky \cite{Dr1} showed that for the needs of PI-algebras representation theory of the general linear group $GL_d(K)$
can be applied and the obtained results can be easily translated in the language of representation theory of the symmetric group.
From this point of view the equation (\ref{multiplicities of cocharacters of B}) for $n=3$
\begin{equation}\label{decomposition degree 3}
P_3({\mathfrak B})=2M(3)\oplus 2M(2,1)
\end{equation}
implies that the irreducible $S_3$-components of $P_3({\mathfrak B})$ can be generated by the linearizations of
\begin{equation}\label{PI (3)}
f_{(3)}(x)=\alpha_1 x(xx)+\alpha_2 (xx)x,\,(\alpha_1,\alpha_2)\not=(0,0),
\end{equation}
\begin{equation}\label{PI (2,1)}
f_{(2,1)}(x,y)=\beta_1x(xy-yx)+\beta_2(xy-yx)x,\,(\beta_1,\beta_2)\not=(0,0).
\end{equation}

We shall describe the varieties which satisfy one of the identities (\ref{PI (3)}) and (\ref{PI (2,1)}).
The results will be given in the language of graph theory. Our approach is similar to the approach of Vladimirova and Drensky \cite{VDr}
where the authors described the varieties of associative algebras satisfying a polynomial identity of third degree.

The set of all subvarieties of a variety $\mathfrak W$ of algebraic systems forms a lattice $L({\mathfrak W})$ with respect to the operations
\[
{\mathfrak W}_1\vee{\mathfrak W}_2=\text{var}({\mathfrak W}_1\cup{\mathfrak W}_2),\;{\mathfrak W}_1\wedge{\mathfrak W}_2={\mathfrak W}_1\cap{\mathfrak W}_2,\;
{\mathfrak W}_1,{\mathfrak W}_2\subseteq {\mathfrak W}.
\]
The lattice $L({\mathfrak W})$ is distributive if
\[
{\mathfrak W}_1\wedge({\mathfrak W}_2\vee{\mathfrak W}_3)=({\mathfrak W}_1\wedge{\mathfrak W}_2)\vee({\mathfrak W}_1\wedge{\mathfrak W}_3),\;
{\mathfrak W}_1,{\mathfrak W}_2,{\mathfrak W}_3\subseteq{\mathfrak W}.
\]
The distributivity of the lattice of subvarieties is equivalent to the condition that in the cocharacter sequence
\[
\chi_n({\mathfrak W})=\sum_{\lambda\vdash n}m(\lambda)\chi_{\lambda},\,n=1,2,\ldots,
\]
all  multiplicities satisfy the condition $m(\lambda)\leq 1$.
Ananin and Kemer \cite{AK} described the varieties of associative algebras with a distributive lattice of subvarieties.
They showed that this happens if and only if  $m(2,1)\leq 1$ in the third cocharacter $\chi_3({\mathfrak W})$ of $\mathfrak W$.
We prove that the similar restriction $m(3),m(2,1)\leq 1$ is a sufficient and necessary condition for the distributivity of the lattice of the subvarieties
of ${\mathfrak W}\subset\mathfrak B$. In the language of polynomial identities this means that $\mathfrak W$ satisfies both identities
(\ref{PI (3)}) and (\ref{PI (2,1)}) for some $(\alpha_1,\alpha_2),(\beta_1,\beta_2)\not=(0,0)$.

\section{Preliminaries}
\subsection{Representations of the symmetric and the general linear groups}
For a background on representation theory of the symmetric group $S_n$ and general linear group $GL_d(K)$ we refer e.g. to the books by James and Kerber \cite{JK} and Weyl \cite{W}.
The irreducible representations of the symmetric group $S_n$ are described by partitions $\lambda=(\lambda_1,\ldots,\lambda_n)$ of $n$ (notation $\lambda\vdash n$).
Similarly, the irreducible polynomial representations of the general linear group $GL_d(K)$ are described by partitions in not more than $d$ parts.
We shall denote by $M(\lambda)$ and $W_d(\lambda)$ the corresponding to $\lambda$ modules of $S_n$ and $GL_d(K)$, respectively.
The group $S_n$ acts from the left on the vector space $P_n$ of the multilinear polynomials of degree $n$ in the free associative algebra $K\langle X\rangle$ by
\[
\sigma(f(x_1,\ldots,x_n))=f(x_{\sigma(1)},\ldots, x_{\sigma(n)}),\;\sigma\in S_n,f(x_1,\ldots,x_n)\in P_n.
\]
The group $GL_d(K)$ acts canonically on the vector space $KX_d$ with basis $X_d=\{x_1,\ldots,x_d\}$
and this action is extended diagonally on the homogeneous component $K\langle X_d\rangle^{(n)}$ of degree $n$ of $K\langle X_d\rangle$ by
\[
g(v(x_1,\ldots,x_d))=v(g(x_1),\ldots,g(x_d)),\;g\in GL_d(K),v(x_1,\ldots,x_d)\in K\langle X_d\rangle^{(n)}.
\]
The symmetric group $S_n$ acts from the right on $K\langle X_d\rangle^{(n)}$ by
\[
(x_{i_1}\cdots x_{i_n})^{\tau^{-1}}=x_{i_{\tau(1)}}\cdots x_{i_{\tau(n)}},\;x_{i_1}\cdots x_{i_n}\in K\langle X_d\rangle^{(n)}, \tau\in S_n.
\]
The irreducible $GL_d(K)$-submodule $W_d(\lambda)=W_d(\lambda_1,\ldots,\lambda_d)$ of $K\langle X_d\rangle^{(n)}$ contains an element $w_{\lambda}(x_1,\ldots,x_d)$
which is called the highest weight vector of $W_d(\lambda)$ and is multihomogeneous of degree $(\lambda_1,\ldots,\lambda_d)$ and can be obtained in the following way.
Let $d_1,\ldots,d_{\lambda_1}$ be the lengths of the columns of the Young diagram of $\lambda$. Then
\[
w_{\lambda}(x_1,\ldots,x_d)=\sum_{\tau\in S_n}\alpha_{\tau}\big(\prod_{k=1}^{\lambda_1}S_{d_k}(x_1,\ldots,x_{d_k})\big)^{\tau},\;\alpha_{\tau}\in K,
\]
where
\[
S_d(x_1,\ldots,x_d)=\sum_{\rho\in S_d}(-1)^{\rho}x_{\rho(1)}\cdots x_{\rho(d)}
\]
is the standard polynomial (and $(-1)^{\rho}$ is the sign of $\rho$).
The linearization of $w_{\lambda}(x_1,\ldots,x_d)$ generates an irreducible $S_n$-submodule $M(\lambda)$ of $P_n$ and every $M(\lambda)$ with $\lambda_{d+1}=0$
can be obtained in such a way.

There is a simple criterion for a multihomogeneous polynomial  $w(x_1,\ldots,x_d)$ of degree $(\lambda_1,\ldots,\lambda_d)$
to be a highest weight vector. Recall that a derivation of an algebra $R$ is a linear map $\Delta:R\to R$ such that
\[
\Delta(uv)=\Delta(u)v+u\Delta(v),\, u,v\in R.
\]
Define a derivation $\Delta_{ij}$, $i,j=1,\ldots,d$, $i\not=j$, of $K\langle X_d\rangle$ by
\[
\Delta_{ij}(x_i)=x_j,\,\Delta_{ij}(x_k)=0,\,k\not=i.
\]
More generally, we define the derivation $\Delta_i(u,\ast)$, $u\in K\langle X_d\rangle$, by
\[
\Delta_i(u,x_i)=u,\,\Delta_i(u,x_k)=0,\,k\not=i.
\]

\begin{criterion}\label{criterion hwv}
Let $w(x_1,\ldots,x_d)\in K\langle X_d\rangle$ be a nonzero multihomogeneous polynomial of degree $(\lambda_1,\ldots,\lambda_d)$.
Then $w(x_1,\ldots,x_d)\not=0$ is a highest weight vector of $W_d(\lambda)\subset K\langle X_d\rangle$ if and only if
$\Delta_{ij}(w)=0$ for all $1\leq j<i\leq d$.
\end{criterion}

The following proposition is similar to \cite[Lemma 2.5]{Dr2} and \cite[Proposition 1]{DrK}.

\begin{proposition}\label{consequences of degree n+1}
Let $M$ be a submodule of $P_n$. Then all consequences of degree $n+1$ of the polynomial identities from $M$ can be obtained in the following way:

{\rm (i)} The $S_{n+1}$-submodule of $P_{n+1}$ generated by the products $Mx_{n+1}$ is a homomorphic image of the $S_{n+1}$-module $(M\otimes M(1))\uparrow S_{n+1}$
induced by the tensor product of the $S_n$-module $M$ and $S_1$-module $M(1)$ and similarly from the consequences generated by the products $x_{n+1}M$.
Here $S_1$ acts on the one-element set $\{n+1\}$ and $M\otimes M(1)$ is considered as a module of $S_n\times S_1\subset S_{n+1}$.

{\rm (ii)} The $S_{n+1}$-submodule of $P_{n+1}$ generated by the substitutions
\[
f(x_1,\ldots,x_{n-1},x_nx_{n+1}), f(x_1,\ldots,x_{n-1},x_n)\in M
\]
is a homomorphic image of $((M\downarrow S_{n-1})\otimes P_2)\uparrow S_{n+1}$. Here $M\downarrow S_{n-1}$ is considered as an $S_{n-1}$-module, $S_2$ acts on $\{n,n+1\}$ and
$(M\downarrow S_{n-1})\otimes P_2$ is an $S_{n-1}\times S_2$-module, $S_{n-1}\times S_2\subset S_{n+1}$.
\end{proposition}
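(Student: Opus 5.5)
We have to prove Proposition~\ref{consequences of degree n+1} in the setting of the free nonassociative algebra; the paper's notation $P_n$ there means the multilinear nonassociative polynomials. The plan is first to list all consequences of degree $n+1$ of the identities in $M$, and then to recognise each family as the image of an explicit module homomorphism. The $T$-ideal generated by $M$ is spanned by the elements obtained from some $f\in M$ by substitution of polynomials for the variables and by multiplication by elements on either side. In degree $n+1$ with multilinear output, only two operations can occur, since $f$ has degree $n$ and the total degree rises by exactly one.
\begin{itemize}
\item One multiplies a multilinear $f\in M$, in variables drawn from $n$ of the $n+1$ indices, by the remaining variable on the right or on the left.
\item One substitutes in $f$ a product of two variables for one of its variables, and renames the variables.
\end{itemize}
Since $M$ is $S_n$-stable, renaming variables amounts to acting by $S_{n+1}$. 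So the consequences of degree $n+1$ are the sum of the $S_{n+1}$-modules generated by $Mx_{n+1}$, by $x_{n+1}M$, and by the substitutions $f(x_1,\ldots,x_{n-1},x_nx_{n+1})$. For the last family we first use the $S_n$-action to move the substituted variable to the last position.

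For (i), we view $M\otimes M(1)$ as an $S_n\times S_1$-module and define $\varphi: M\otimes M(1)\to P_{n+1}$ by $f\otimes 1\mapsto f x_{n+1}$. This map is $S_n\times S_1$-equivariant, because permutations of $\{1,\ldots,n\}$ act on $f$ and fix $x_{n+1}$. By Frobenius reciprocity, equivalently the universal property of the induced module $K S_{n+1}\otimes_{K(S_n\times S_1)}(M\otimes M(1))$, the map $\varphi$ extends to an $S_{n+1}$-homomorphism from the induced module to $P_{n+1}$. Its image is the $S_{n+1}$-span of $Mx_{n+1}$. The same argument applies verbatim to $x_{n+1}M$.

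For (ii), we restrict $M$ to $S_{n-1}$, which acts on $x_1,\ldots,x_{n-1}$. Here $P_2$ is spanned by $x_nx_{n+1}$ and $x_{n+1}x_n$, with $S_2$ permuting $n$ and $n+1$. Define
\[
\psi\colon (M\downarrow S_{n-1})\otimes P_2\to P_{n+1},\quad f\otimes u(x_n,x_{n+1})\mapsto f(x_1,\ldots,x_{n-1},u).
\]
We check that $\psi$ is $S_{n-1}\times S_2$-equivariant. The $S_{n-1}$ part commutes with substituting for $x_n$, because it does not touch $x_n$. The $S_2$ part acts only on $u$, and substitution is linear in $u$. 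Inducing up to $S_{n+1}$ gives a homomorphism whose image is the $S_{n+1}$-span of all $f(x_1,\ldots,x_{n-1},x_nx_{n+1})$. Together with the first step, this shows that all consequences of degree $n+1$ are obtained.

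The one point that needs care is the completeness claim in the first step. Substitutions of higher degree, or combined multiplications, cannot arise, because they would produce degree greater than $n+1$. The only other possibility is substituting a variable already present, which destroys multilinearity; the consequences that remain multilinear are exactly the ones listed. Also, substituting a product for any variable $x_i$ reduces to substituting for $x_n$ after applying a suitable $\sigma\in S_n$ to $f$, and $\sigma f\in M$.
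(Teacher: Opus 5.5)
Your argument is correct and is essentially the standard proof that the paper defers to. You enumerate the multilinear consequences of degree $n+1$ as one-sided multiplication by the new variable or as substitution of a product of two variables, and you realize each family as the image of the induced module via the universal property of induction, after checking $S_n\times S_1$- and $S_{n-1}\times S_2$-equivariance. The one step you might state explicitly is that the $T$-ideal is spanned by multihomogeneous elements, namely the $f(g_1,\ldots,g_n)$ with monomials $g_i$ together with their iterated left and right multiples by monomials; this is what makes your restriction to multilinear spanning elements in the completeness step rigorous.
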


The proof repeats verbatim the proofs in \cite{Dr2} and \cite{DrK}.
Similar considerations hold for any variety of algebras over a field of characteristic 0.

\begin{example}\label{consequences in partitions}
Below we give the decomposition of the $S_{n+1}$-modules in Proposition \ref{consequences of degree n+1}.
The calculations are easy exercises on the Branching theorem and the Littlewood-Richardson rule.

(i) $(M\otimes M(1))\uparrow S_{n+1}$:

\noindent $\lambda=(\lambda_1,\lambda_2)\vdash n$, $\lambda_1>\lambda_2\geq 0$:
\[
(M(\lambda)\otimes M(1))\uparrow S_{n+1}=M(\lambda_1+1,\lambda_2)\oplus M(\lambda_1,\lambda_2+1);
\]
$\lambda=(\lambda_1,\lambda_1)\vdash n\geq 2$:
\[
(M(\lambda)\otimes M(1))\uparrow S_{n+1}=M(\lambda_1+1,\lambda_1).
\]

(ii) $((M\downarrow S_{n-1})\otimes M(2))\uparrow S_{n+1}$:

\noindent $\lambda=(n)$, $n\geq 3$:
\[
(M(n)\downarrow S_{n-1})\otimes M(2))\uparrow S_{n+1}=M(n+1)\oplus M(n,1)\oplus M(n-1,2);
\]
$\lambda=(\lambda_1,\lambda_2)$, $\lambda_1\geq \lambda_2+3$, $\lambda_2\geq 2$:
\[
M(\lambda_1,\lambda_2)\downarrow S_{n-1}=M(\lambda_1,\lambda_2-1)+M(\lambda_1-1,\lambda_2),
\]
\[
(M(\lambda_1,\lambda_2-1)\otimes M(2))\uparrow S_{n+1}=M(\lambda_1+2,\lambda_2-1)+M(\lambda_1+1,\lambda_2)
\]
\[
+M(\lambda_1+1,\lambda_2-1,1)+M(\lambda_1,\lambda_2+1)+M(\lambda_1,\lambda_2,1)+M(\lambda_1,\lambda_2-1,2);
\]
\[
(M(\lambda_1-1,\lambda_2)\otimes M(2))\uparrow S_{n+1}=M(\lambda_1+1,\lambda_2)+M(\lambda_1,\lambda_2+1)+M(\lambda_1,\lambda_2,1)
\]
\[
+M(\lambda_1-1,\lambda_2+2)+M(\lambda_1-1,\lambda_2+1,1)+M(\lambda_1-1,\lambda_2+2)
\]

(iii) $((M\downarrow S_{n-1})\otimes M(1^2))\uparrow S_{n+1}$:

\noindent $\lambda=(n)$, $n\geq 2$:
\[
(M(n)\downarrow S_{n-1})\otimes M(1^2))\uparrow S_{n+1}=M(n,1)+M(n-1,1^2);
\]
$\lambda=(\lambda_1,\lambda_2)$, $\lambda_1\geq \lambda_2+2$, $\lambda_2\geq 2$:
\[
M(\lambda_1,\lambda_2)\downarrow S_{n-1}=M(\lambda_1-1,\lambda_2)+M(\lambda_1,\lambda_2-1),
\]
\[
(M(\lambda_1-1,\lambda_2)\otimes M(1^2))\uparrow S_{n+1}=M(\lambda_1,\lambda_2+1)+M(\lambda_1,\lambda_2,1)
\]
\[
+M(\lambda_1-1,\lambda_2+1)+M(\lambda_1-1,\lambda_2,1^2),
\]
\[
(M(\lambda_1,\lambda_2-1)\otimes M(1^2))\uparrow S_{n+1}=M(\lambda_1+1,\lambda_2)+M(\lambda_1+1,\lambda_2-1,1)
\]
\[
+M(\lambda_1,\lambda_2,1)+M(\lambda_1,\lambda_2-1,1^2).
\]
\end{example}

\begin{example}\label{consequences of M(3) in associative case}
The following are the consequences of degree 4 in two variables for
\[
u_{(3)}(x_1)=x_1^3\in W_d(3)\subset K\langle X_d\rangle:
\]
(i) $u^{(1)}_{(4)}=u_{(3)}(x_1)x_1$, $u^{(1)}_{(3,1)}(x_1,x_2)=\Delta_{12}(u_{(3)}(x_1))x_1-3u_{(3)}(x_1)x_2$
and similarly we obtain $u^{(2)}_{(4)}=x_1u_{(3)}(x_1)$ and $u^{(2)}_{(3,1)}=x_1\Delta_{12}(u_{(3)}(x_1))-3x_2u_{(3)}(x_1)$
by multiplication from the left.

(ii) $u^{(3)}_{(4)}=\Delta_1(x_1^2,u_{(3)}(x_1))$. The complete linearization of $u_{(3)}(x_1)$ is
\[
h_{(3)}(x_1,x_2,x_3)=\sum_{\sigma\in S_3}x_{\sigma(1)}x_{\sigma(2)}x_{\sigma(3)},
\]
\[
u^{(3)}_{(3,1)}=\Delta_1(x_1x_2+x_2x_1,u_{(3)}(x_1))-h_{(3)}(x_1,x_2,x_1^2)=2x_1x_2x_1^2-x_1^3x_2-x_1^2x_2x_1,
\]
\[
u^{(1)}_{(2^2)}=\Delta_1(x_2^2,u_{(3)}(x_1))+\Delta_2(x_1^2,u_{(3)}(x_2))-\frac{1}{2}h_{(3)}(x_1,x_2,x_1x_2+x_2x_1).
\]

(iii) $u^{(4)}_{(3,1)}=\Delta_1([x_1,x_2],u_{(3)}(x_1))$, where $[x_1,x_2]=x_1x_2-x_2x_1$ is the commutator of $x_1$ and $x_2$.
\end{example}

\begin{example}\label{consequences of M(2,1) in associative case}
The following are the consequences of degree 4 in two variables for
\[
u_{(2,1)}(x_1,x_2)=\beta_1x_1[x_1,x_2]+\beta_2[x_1,x_2]x_1\in W_d(2,1)\subset K\langle X_d\rangle:
\]
(i) $u^{(1)}_{(3,1)}=u_{(2,1)}(x_1,x_2)x_1$, $u^{(1)}_{(2^2)}=u_{(2,1)}(x_1,x_2)x_2+u_{(2,1)}(x_2,x_1)x_1$
and similarly $u^{(2)}_{(3,1)}=x_1u_{(2,1)}(x_1,x_2)$ and $u^{(2)}_{(2^2)}=x_1u_{(2,1)}(x_1,x_2)+x_1u_{(2,1)}(x_2,x_1)$ by multiplication from the left.

(ii) $u^{(1)}_{(4)}=\Delta_2(x_1^2,u_{(2,1)}(x_1,x_2))$. Let
\[
h_{(2,1)}(x_1,x_2,x_3)=\Delta_{12}(u_{(2,1)}(x_1,x_3))
\]
\[
=\beta_1(x_1[x_2,x_3]+x_2[x_1,x_3])+\beta_2([x_1,x_3]x_2+[x_2,x_3]x_1)
\]
be the linearization of $u_{(2,1)}(x_1,x_2)$. Then
\[
h_{(2)}(x_1,x_2,x_3)=h_{(2,1)}(x_1,x_2,x_3),
\]
\[
h_{(1^2)}(x_1,x_2,x_3)=h_{(2,1)}(x_1,x_2,x_3)+2h_{(2,1)}(x_1,x_3,x_2)
\]
are, respectively, the generators of the $S_2$-submodules $M(2)$ and $M(1^2)$ of the $S_2$-module $M(2,1)\downarrow S_2$.
Applying Criterion \ref{criterion hwv} we obtain that the following polynomials are highest weight vectors:
\[
u^{(3)}_{(3,1)}=u_{(2,1)}(x_1,x_1x_2+x_2x_1),
\]
\[
u^{(3)}_{(2^2)}=h_{(2)}(x_1,x_2,x_1x_2+x_2x_1)=h_{(2,1)}(x_1,x_2,x_1x_2+x_2x_1),
\]
\[
u^{(4)}_{(3,1)}=h_{(1^2)}(x_1,x_2,x_1^2]).
\]

(iii) $u^{(5)}_{(3,1)}=h_{(2)}(x_1,x_1,[x_1,x_2])$, $u^{(4)}_{(2^2)}=h_{(1^2)}(x_1,x_2,[x_1,x_2])$.
\end{example}

\subsection{The free bicommutative algebra}
The free bicommutative algebra $F({\mathfrak B})$ was described in \cite{DT} and \cite{DIT}.
In was shown there that the square $A^2$ of any bicommutative algebra $A$ is a commutative and associative algebra.
The idea for the construction of the basis of $F({\mathfrak B})$ given in \cite{DIT} was used in our paper \cite{DrZ},
where we found the following algebra $G_d$ which is isomorphic to $F_d({\mathfrak B})$.
Let
\[
K[Y_d,Z_d]=K[y_1,\ldots,y_d,z_1,\ldots,z_d]
\]
be the polynomial algebra in $2d$ variables
and let $\omega(K[Y_d])$ and $\omega(K[Z_d])$ be the augmentation ideals of $K[Y_d]$ and $K[Z_d]$, respectively.
Then the algebra $G_d$ has a basis
\[
X_d\cup \{Y_d^{\alpha}Z_d^{\beta}\mid Y_d^{\alpha}Z_d^{\beta}\in \omega(K[Y_d])\omega(K[Z_d])\},
\]
where $Y_d^{\alpha}Z_d^{\beta}=y_1^{\alpha_1}\cdots y_d^{\alpha_d}z_1^{\beta_1}\cdots z_d^{\beta_d}$
and the multiplication in $G_d$ is defined by
\[
x_ix_j=y_iz_j,\,x_i(Y_d^{\alpha}Z_d^{\beta})=y_iY_d^{\alpha}Z_d^{\beta},\,(Y_d^{\alpha}Z_d^{\beta})x_i=Y_d^{\alpha}Z_d^{\beta}z_i,
\]
\[
(Y_d^{\alpha}Z_d^{\beta})(Y_d^{\gamma}Z_d^{\delta})=Y_d^{\alpha+\gamma}Z_d^{\beta+\delta}.
\]
Among the consequences of a homogeneous polynomial $f(X_d)\in F_d({\mathfrak B})$ of degree $\geq 2$
there are such which are obtained by substitutions of a variable by linear combination of variables
 and by products of variables. If the image of $f(X_d)\in F_d({\mathfrak B})$ in $G_d$ is $u(Y_d,Z_d)$
 then the images of $f(x_1,\ldots,x_{i-1},x_j+x_k,x_{i+1},\ldots,x_d)$ and $f(x_1,\ldots,x_{i-1},x_jx_k,x_{i+1},\ldots,x_d)$
 in $G_d$ are, respectively,
 \[
 u(y_1,\ldots,y_{i-1},y_j+y_k,y_{i+1},\ldots,y_d,z_1,\ldots,z_{i-1},z_j+z_k,z_{i+1},\ldots,z_d),
 \]
 \[
 u(y_1,\ldots,y_{i-1},y_jz_k,y_{i+1},\ldots,y_d,z_1,\ldots,z_{i-1},y_jz_k,z_{i+1},\ldots,z_d).
 \]
We shall use the same notation $\Delta_{ij}$ for the derivation of the algebra $G_d$
which is an analogue of the derivation $\Delta_{ij}$, $i,j=1,\ldots,d$, $i\not=j$, of $K\langle X_d\rangle$. It acts on $X_d,Y_d,Z_d$ by
\[
\Delta_{ij}(x_i)=x_j,\,\Delta_{ij}(y_i)=y_j,\,\Delta_{ij}(z_i)=z_j,\,\Delta_{ij}(x_k)=\Delta_{ij}(y_k)=\Delta_{ij}(z_k)=0,\,k\not=i.
\]
Similarly, if $u=u(Y_d,Z_d)\in G_d^2$, then
\[
\Delta_i(u):x_i,y_i,z_i\to u(Y_d,Z_d),\,\Delta_i(u):x_k,y_k,z_k\to 0,\,k\not=i.
\]
In the sequel we shall identify the algebras $F_d({\mathfrak B})$ and $G_d$ and shall speak for example about the polynomial identity
$f(Y_d,Z_d)=0$ instead of its preimage in $F_d({\mathfrak B})$.

The highest weight vectors of the $GL_d(K)$-submodules $W_d(\lambda)$ of $G_d$ were described in \cite[Lemma 3.3]{Dr3}:

\begin{lemma}\label{hwv of G}
The following polynomials $w_{\lambda}^{(k)}$ form a maximal linearly independent system of highest weight vectors of the $GL_d(K)$-submodules $W_d(\lambda)$ in $G_d^2$:
\begin{equation}\label{hwv of W(lambda)}
\begin{split}
w_{(n)}^{(j)}=y_1^jz_1^{n-j},&\quad j=1,2,\ldots,n-1,\\
w_{\lambda}^{(j)}=y_1^j(y_1z_2-y_2z_1)^{\lambda_2}z_1^{\lambda_1-\lambda_2-j}, &\quad j=0,1,\ldots,\lambda_1-\lambda_2,\text{ if }\lambda_2>0.
\end{split}
\end{equation}
\end{lemma}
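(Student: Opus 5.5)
The plan is to verify two properties of the listed polynomials: each is a highest weight vector, and together they realize every multiplicity $m(\lambda)$ in (\ref{multiplicities of cocharacters of B}). Since $G_d\cong F_d({\mathfrak B})$, the homogeneous component $G_d^{(n)}$ of degree $n\geq 2$ lies in $G_d^2$. It is spanned by the monomials $Y_d^{\alpha}Z_d^{\beta}$ with $|\alpha|+|\beta|=n$, $|\alpha|,|\beta|\geq 1$. The number of linearly independent highest weight vectors of multidegree $\lambda$ in $G_d^{(n)}$ equals the multiplicity of $W_d(\lambda)$. By the Berele--Drensky correspondence, this is the multiplicity $m(\lambda)$ of $M(\lambda)$ in $P_n({\mathfrak B})$ when $\lambda$ has at most $d$ parts. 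So it suffices to show three things: the listed elements are highest weight vectors; they are linearly independent within each multidegree; and their number for each $\lambda$ equals $m(\lambda)$.

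For the highest weight property we apply Criterion \ref{criterion hwv} in the algebra $G_d$, with $\Delta_{ij}$ acting on the $y$'s and $z$'s as defined above. Every $w_\lambda^{(j)}$ involves only $y_1,y_2,z_1,z_2$. Hence $\Delta_{ij}$ kills it for $i\geq 3$. The only operator to check is $\Delta_{21}$, which sends $y_2\mapsto y_1$ and $z_2\mapsto z_1$ and is a derivation of the polynomial algebra. It annihilates $y_1$ and $z_1$, and $\Delta_{21}(y_1z_2-y_2z_1)=y_1z_1-y_1z_1=0$. By the Leibniz rule every $w_\lambda^{(j)}$ is therefore annihilated. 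The multidegree of $w_\lambda^{(j)}$ is $(\lambda_1,\lambda_2)$, where the degree in index $i$ is the total degree in $y_i,z_i$. The bidegree condition $|\alpha|,|\beta|\geq1$ singles out the ranges of $j$. For $\lambda=(n)$ we need $1\le j\le n-1$. For $\lambda_2>0$ the factor $(y_1z_2-y_2z_1)^{\lambda_2}$ already contains both $y$ and $z$, so $j$ runs over $0,\dots,\lambda_1-\lambda_2$.

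Linear independence follows because, for fixed $\lambda$, the vectors $w_\lambda^{(j)}$ have different $y$-degrees $j+\lambda_2$, so they lie in distinct homogeneous components. Counting gives $n-1$ vectors for $\lambda=(n)$ and $\lambda_1-\lambda_2+1=n-2\lambda_2+1$ for $\lambda_2>0$, in agreement with (\ref{multiplicities of cocharacters of B}). Partitions with three or more parts have $m(\lambda)=0$, so nothing further is needed.

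The main obstacle is maximality, that is, knowing there are no further highest weight vectors. I would rely on the cocharacter formula from \cite{DIT} quoted in the introduction. If one wants to avoid it, a direct argument is available. Decompose $G_d^{(n)}$ by $y$-degree $p$ as $S^p(V)\otimes S^{n-p}(V)$ with $V=KX_d$. By the Pieri rule this is $\bigoplus_{k=0}^{\min(p,n-p)}W_d(n-k,k)$, with each summand occurring once. Summing over $1\le p\le n-1$ then recovers exactly the listed counts. It also shows that the vectors with $y$-degree $p$ exhaust that component, since $y_1^{p-k}z_1^{n-p-k}(y_1z_2-y_2z_1)^k$ is the standard highest weight vector of $W_d(n-k,k)\subset S^p\otimes S^{n-p}$.
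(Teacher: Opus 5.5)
Your proof is correct. Note that the paper gives no proof of this lemma; it imports it from \cite[Lemma 3.3]{Dr3}. So your proposal supplies an argument the text does not contain, and the useful comparison is between your two routes.

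\textbf{Your main line.} You check three things:
\begin{itemize}
\item annihilation by $\Delta_{21}$, the only relevant raising operator since only indices $1,2$ occur;
\item linear independence, via distinct $y$-degrees;
\item agreement of the counts $n-1$ and $\lambda_1-\lambda_2+1$ with (\ref{multiplicities of cocharacters of B}).
\end{itemize}
This is complete. However, it makes the lemma logically dependent on the cocharacter computation of \cite{DIT}, which was obtained from an explicit basis of $F(\mathfrak B)$.

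\textbf{Your Pieri route.} I would make your last paragraph the main proof. The $GL_d(K)$-action preserves $y$-degree, and $K[Y_d]^{(p)}K[Z_d]^{(n-p)}\cong S^p(V)\otimes S^{n-p}(V)$. By the Pieri rule, $W_d(n-k,k)$ then occurs exactly once in each $y$-degree $p$ with $\max(k,1)\le p\le n-\max(k,1)$, and not otherwise. Hence the highest weight vectors of weight $\lambda=(n-k,k)$ form a direct sum over $p$ of one-dimensional pieces. Each piece is spanned by $y_1^{p-k}z_1^{n-p-k}(y_1z_2-y_2z_1)^k=w_\lambda^{(p-\lambda_2)}$. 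This argument needs only the description of $G_d$, and it recovers (\ref{multiplicities of cocharacters of B}) as a corollary instead of assuming it.

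State one step explicitly there. A highest weight vector of $G_d^{(n)}$ splits into $y$-homogeneous components, and each component is again a highest weight vector (or zero), because $\Delta_{21}$ and the weight both preserve $y$-degree. This is what justifies reducing to a single summand $S^p(V)\otimes S^{n-p}(V)$.
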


In particular, the polynomials in (\ref{PI (3)}) and (\ref{PI (2,1)}) are, respectively,
special cases for $\lambda=(3)$ and $\lambda=(2,1)$ of the highest weight vectors in Lemma \ref{hwv of G}.
Their images in $G_d$ are
\begin{equation}\label{PI (3) in G}
w_{(3)}(y_1,z_1)=(\alpha_1y_1+\alpha_2z_1)(y_1z_1)=\alpha_1w_{(3)}^{(2)}+\alpha_2w_{(3)}^{(1)},
\end{equation}
\begin{equation}\label{PI (2,1) in G}
w_{(2,1)}(y_1,y_2,z_1,z_2)=(\beta_1y_1+\beta_2z_1)(y_1z_2-y_2z_1)=\beta_1w_{(2,1)}^{(1)}+\beta_2w_{(2,1)}^{(0)},
\end{equation}
$(\alpha_1,\alpha_2)\not=(0,0)$, $(\beta_1,\beta_2)\not=(0,0)$.

The following statement from \cite[Lemma 4.2]{Dr3} will allow to restrict our considerations
in the search for consequences of the identities (\ref{PI (3) in G}) and (\ref{PI (2,1) in G}).

\begin{lemma}\label{consequences of w(lambda)}
If $0\not=f\in W(\lambda_1,\lambda_2)\subset F({\mathfrak B})$, then all polynomial identities $w_{(\mu_1,\mu_2)}^{(j)}=0$ with $\mu_2\geq \lambda_1$
are consequences of the polynomial identity $f=0$.
\end{lemma}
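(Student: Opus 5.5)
We work in $G_d$ and need consequences of $f$ only in two variables. Since $f\ne0$ lies in an irreducible $GL$-module $W(\lambda_1,\lambda_2)$, the identity $f=0$ is equivalent to the vanishing of the highest weight vector of that module. By Lemma \ref{hwv of G}, this highest weight vector has the form
\[
w=p(y_1,z_1)\,(y_1z_2-y_2z_1)^{\lambda_2},\qquad p=\sum_{j=0}^{\lambda_1-\lambda_2}\gamma_jy_1^jz_1^{\lambda_1-\lambda_2-j}\ne0
\]
(for $\lambda_2=0$ the sum runs over $1\le j\le n-1$). Set $D=y_1z_2-y_2z_1$. The target is to show that the T-ideal generated by $w$ contains every $w^{(j)}_{(\mu_1,\mu_2)}=y_1^jD^{\mu_2}z_1^{\mu_1-\mu_2-j}$ with $\mu_2\ge\lambda_1$.

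\textbf{Step 1: reduce to $D^{\lambda_1}$ times arbitrary monomials.} We substitute products of variables for $x_1,x_2$ in $w$, using the substitution rule in $G_d$: replacing $x_i$ by $x_jx_k$ sends both $y_i$ and $z_i$ to $y_jz_k$. The aim is to produce an element in which the factor $p$ becomes "symmetric", hence divisible by a power of $D$.

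\textbf{Step 2: produce $D^{\lambda_1}$.} Substitute $x_1\mapsto x_1x_2$ into the linearization of $w$ in a fresh variable, or equivalently use $\Delta_{12}$ and the derivations $\Delta_i(u,\ast)$. The point is that in the images the variable $x_1x_2$ contributes $y_1z_2$ to both $y$ and $z$ slots. So $p(y_1z_2,y_1z_2)$ becomes a scalar multiple $p(1,1)(y_1z_2)^{\lambda_1-\lambda_2}$. Combinations of such substitutions, for instance $x_1\mapsto x_1x_2$ minus $x_1\mapsto x_2x_1$, then produce factors $D$. Concretely, I would consider the polynomial $w$ in variables $x_1,x_3$, i.e. $p(y_1,z_1)(y_1z_3-y_3z_1)^{\lambda_2}$. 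Substituting $x_1\mapsto x_1x_2$ and $x_1\mapsto x_2x_1$ and subtracting, and iterating, should yield $q\cdot D_{12}^{\,\lambda_1-\lambda_2}(\ldots)D^{\lambda_2}$-type elements. Since the factor $p$ has degree $\lambda_1-\lambda_2$ in $x_1$, a suitable alternating sum over $\lambda_1-\lambda_2$ such substitutions annihilates everything except a term proportional to $D^{\lambda_1-\lambda_2}$ times a nonzero coefficient. Here one must be careful: if $p(1,1)=0$ one uses derivatives of $p$ instead. A nonzero polynomial $p$ has some nonzero Taylor coefficient, which governs the surviving term.

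\textbf{Step 3: fill out all $\mu$.} Once one nonzero element of the form $cD^{\mu_2}\cdot m$ with $\mu_2\ge\lambda_1$ lies in the T-ideal, we generate everything. Multiplying by $x_1$ on the left or right multiplies by $y_1$ or $z_1$, and substituting $x_2\mapsto x_2+x_1$ etc.\ keeps $D$ invariant. Together with the multiplicity-free decomposition inside $W(\mu)$ for fixed degrees, this gives all $w^{(j)}_{\mu}$. We would also use that the module spanned by $y_1^jz_1^{k}D^{\mu_2}$ with $j+k$ fixed is reached by Criterion \ref{criterion hwv} arguments and multiplications.

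The main obstacle is Step 2: arranging the substitutions so that the surviving coefficient is provably nonzero for every nonzero $p$. The first attempt would be to expand $p(y_1z_2,y_1z_2)$-type images and reduce to a Vandermonde-type nondegeneracy in the coefficients $\gamma_j$.
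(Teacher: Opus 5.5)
The paper does not prove this lemma; it quotes it from \cite{Dr3}. So I am judging your plan on its own terms. It has a genuine gap, and it is exactly where you suspect: Step~2 is the whole content of the lemma, and the mechanism you propose for it cannot work.

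\textbf{Why Step 2 fails.} Substituting a product or a commutator $u$ for $x_1$ sends both $y_1$ and $z_1$ to $u$. Consequently:
\begin{itemize}
\item On the factor $p(y_1,z_1)$, full substitution gives $p(1,1)u^k$, with $k=\lambda_1-\lambda_2$.
\item Substituting into only some slots of the linearization acts through $u(\partial_s+\partial_t)$, so it replaces $p$ by $(\partial_s+\partial_t)^m p$.
\item Since $\partial_s+\partial_t$ annihilates $(s-t)^k$, for $w=(y_1-z_1)^kD^{\lambda_2}$ every such derivative with $m\geq 1$ vanishes. The simplest instance is $w=(y_1-z_1)(y_1z_2-y_2z_1)$, the case $\beta_1+\beta_2=0$, where the lemma is actually used.
\end{itemize}
So no ``Taylor coefficient'' survives, and no Vandermonde argument can rescue this.

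The same substitution also hits $D^{\lambda_2}$ and raises the degree. For example, $x_1\mapsto[x_1,x_2]$ in $p(y_1,z_1)(y_1z_3-y_3z_1)^{\lambda_2}$ gives $p(1,1)D^{\lambda_1}(z_3-y_3)^{\lambda_2}$, which has degree $2\lambda_1+\lambda_2$. Step~3 cannot repair this. The lemma includes $w^{(0)}_{(\lambda_1,\lambda_1)}=D^{\lambda_1}$ of degree $2\lambda_1$, nothing lowers degree, and so you need a nonzero multiple of $D^{\lambda_1}$ itself.

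\textbf{The missing idea.} No substitutions of products are needed. Left and right multiplication by $x_i$ multiply by $y_i$ and $z_i$. Hence the T-ideal of $f$ contains $u\,r$ for every consequence $u\in G_d^2$ and every polynomial $r\in K[Y_d,Z_d]$. The powers of $D$ come from $GL_2$-symmetrization. This is exactly the computation giving $(\beta_1\gamma_2-\beta_2\gamma_1)(y_1z_2-y_2z_1)^2$ in the proof of Theorem~\ref{lattice case 4}, which is the case $k=1$.

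In general, write $p(s,t)=\sum_j\gamma_js^jt^{k-j}$ and define $p_i$ by
\[
p(ay_1+by_2,\,az_1+bz_2)=\sum_{i=0}^{k}a^{k-i}b^{i}p_i .
\]
Since $w(ax_1+bx_2,x_2)=a^{\lambda_2}p(ay_1+by_2,az_1+bz_2)D^{\lambda_2}$, every $p_iD^{\lambda_2}$ is a consequence of $f$. Choose $j_0$ with $\gamma_{j_0}\neq0$, and define $q_i$ in the same way from $q(s,t)=s^{k-j_0}t^{j_0}$. Then
\[
\sum_{i=0}^{k}(-1)^i\binom{k}{i}^{-1}p_i\,q_{k-i}=(-1)^{k-j_0}\binom{k}{j_0}^{-1}\gamma_{j_0}\,D^{k}.
\]
To see this:
\begin{itemize}
\item Up to a nonzero constant, the left side is the $k$-th transvectant in $(a,b)$ of the two binary forms above.
\item Hence it is a relative invariant of weight $\det^k$ and degree $2k$ in $y_1,y_2,z_1,z_2$, and therefore a multiple of $D^k$.
\item The scalar is read off at $y_1=z_2=1$, $y_2=z_1=0$.
\end{itemize}
Multiplying by $D^{\lambda_2}$ puts a nonzero multiple of $D^{\lambda_1}$ in the T-ideal; each $q_{k-i}$ is realized by $k$ one-sided multiplications. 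Multiplying further by $y_1^j$, by $z_1^{\mu_1-\mu_2-j}$, and by the element $(x_1x_2-x_2x_1)^{\mu_2-\lambda_1}$ then gives all $w^{(j)}_{(\mu_1,\mu_2)}$ with $\mu_2\geq\lambda_1$. This is your Step~3, now starting from the right element.
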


In the sequel we denote, respectively, by ${\mathfrak U}_{\alpha}$ and ${\mathfrak V}_{\beta}$ the varieties of bicommutative algebras
defined by the polynomial identities (\ref{PI (3)}) and (\ref{PI (2,1)}).
Since for the defining identities identities we have, respectively, $\lambda_1=3$ and $\lambda_1=2$.
Since $\chi_n({\mathfrak B})$ is a sum of $S_n$-characters $\chi_{\lambda}$ which $\lambda=(\lambda_1,\lambda_2)$ only,
Lemma \ref{consequences of w(lambda)} gives immediately:

\begin{corollary}\label{bounds for cocharacters}
\[
\chi_n({\mathfrak U}_{\alpha})=m_{\alpha}(n)\chi_{(n)}+m_{\alpha}(n-1,1)\chi_{(n-1,1)}+m_{\alpha}(n-2,2)\chi_{(n-2,2)},
\]
\[
\chi_n({\mathfrak V}_{\alpha})=m_{\beta}(n)\chi_{(n)}+m_{\beta}(n-1,1)\chi_{(n-1,1)}.
\]
\end{corollary}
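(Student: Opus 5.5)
The plan is to read off the shape of the cocharacter directly from Lemma \ref{consequences of w(lambda)}, together with the description of $\chi_n({\mathfrak B})$ in (\ref{multiplicities of cocharacters of B}).

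By Lemma \ref{hwv of G} and (\ref{multiplicities of cocharacters of B}), every irreducible $GL_d(K)$-submodule of $G_d^2$ has the form $W_d(\mu)$ with $\mu=(\mu_1,\mu_2)$ a partition in at most two parts. Hence
\[
\chi_n({\mathfrak W})=\sum_{\mu_2\ge 0} m_{\mathfrak W}(n-\mu_2,\mu_2)\chi_{(n-\mu_2,\mu_2)}
\]
for every subvariety ${\mathfrak W}\subseteq{\mathfrak B}$. Moreover, $m_{\mathfrak W}(\mu)$ equals $m(\mu)$ minus the dimension of the span of those highest weight vectors $w_\mu^{(j)}$ from (\ref{hwv of W(lambda)}) that lie in the T-ideal of ${\mathfrak W}$, modulo the identities of ${\mathfrak B}$. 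So it suffices to show that for each $\mu$ with large $\mu_2$, all the $w_\mu^{(j)}$ are identities of ${\mathfrak W}$.

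Consider first ${\mathfrak U}_{\alpha}$. Its defining identity (\ref{PI (3) in G}) is a nonzero element of $W(3)\subset F({\mathfrak B})$, so $\lambda_1=3$. By Lemma \ref{consequences of w(lambda)}, every $w^{(j)}_{(\mu_1,\mu_2)}$ with $\mu_2\ge 3$ is a consequence of it. Thus the whole isotypic component of $W(\mu)$ in the relatively free algebra of ${\mathfrak U}_{\alpha}$ vanishes for $\mu_2\ge 3$, and $m_\alpha(n-\mu_2,\mu_2)=0$ there. Only $\mu_2\in\{0,1,2\}$ survive, which is exactly the displayed formula.

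Next consider ${\mathfrak V}_{\beta}$. Its identity (\ref{PI (2,1) in G}) is a nonzero element of $W(2,1)$, so $\lambda_1=2$. By Lemma \ref{consequences of w(lambda)}, every $w^{(j)}_\mu$ with $\mu_2\ge 2$ vanishes, leaving only $\chi_{(n)}$ and $\chi_{(n-1,1)}$.

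The only point that needs care is the passage from highest weight vectors to multiplicities. Lemma \ref{hwv of G} says the $w_\mu^{(j)}$ form a maximal linearly independent system of highest weight vectors for $W_d(\mu)$ in $G_d^2$. So if all of them lie in the T-ideal, the multiplicity of $W_d(\mu)$ in the relatively free algebra is zero. By the Berele--Drensky correspondence with $d\ge 2$, this is the same as saying the multiplicity of $\chi_\mu$ in $\chi_n$ is zero. Degree $1$ and partitions with more than two parts contribute nothing beyond what is already written. No real obstacle remains, since the corollary is a direct bookkeeping consequence of the two cited lemmas.
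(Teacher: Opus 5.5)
Your proposal is correct and is essentially the paper's own argument: the paper notes that $\chi_n(\mathfrak B)$ involves only two-row partitions and applies Lemma~\ref{consequences of w(lambda)} with $\lambda_1=3$ for $\mathfrak U_\alpha$ and with $\lambda_1=2$ for $\mathfrak V_\beta$. Your extra paragraph on passing from highest weight vectors to multiplicities only makes explicit what the paper leaves implicit. Strictly, one subtracts the dimension of the space of all weight-$\mu$ highest weight vectors lying in the T-ideal, not the span of those individual $w_\mu^{(j)}$ that happen to lie there; since you only use the case where all of them do, this does not affect your argument.
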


\section{The variety defined by the identity $\alpha_1 x(xx)+\alpha_2 (xx)x=0$}
Translating in the language of $G_d$ the identity (\ref{PI (3)}) becomes by (\ref{PI (3) in G})
\[
w_{(3)}(y_1,z_1)=(\alpha_1y_1+\alpha_2z_1)(y_1z_1)=\alpha_1w_{(3)}^{(2)}+\alpha_2w_{(3)}^{(1)}=0.
\]
We shall denote by $G_d({\mathfrak U}_{\alpha})$ the factor algebra of $G_d$ modulo the T-ideal generated by this identity.
Clearly, $G_d({\mathfrak U}_{\alpha})$ is isomorphic to the relatively free algebra $F_d({\mathfrak U}_{\alpha})$.

Transferring the results in Example \ref{consequences of M(3) in associative case} for the consequences of the identity $u_{(3)}(x_1)=x_1^3$
in the following lemma we obtain all consequences of degree 4 of the identity $w_{(3)}(y_1,z_1)=0$.
The highest weight vectors are expressed as linear combinations of the highest weight vectors in (\ref{hwv of W(lambda)}).
Their explicit form is found with the help of Criterion \ref{criterion hwv}.

\begin{lemma}\label{consequences of M(3) in G}
All consequences of degree $4$ in $G_d({\mathfrak U}_{\alpha})$ of the identity {\rm (\ref{PI (3)})} are equivalent to the identities obtained in the following way:

{\rm (i)} By one side multiplication:
\[
v^{(1)}_{(4)}=w_{(3)}(y_1,z_1)z_1=(\alpha_1y_1+\alpha_2z_1)(y_1z_1)z_1=\alpha_1w_{(4)}^{(2)}+\alpha_2w_{(4)}^{(1)}=0,
\]
\[
v^{(1)}_{(3,1)}(y_1,y_2,z_1,z_2)=\Delta_{12}(w_{(3)}(y_1,z_1))z_1-3w_{(3)}(y_1,z_1)z_2
\]
\[
=-(2\alpha_1y_1+\alpha_2z_1)(y_1z_2-y_2z_1)z_1=-(2\alpha_1w_{(3,1)}^{(1)}+\alpha_2w_{(3,1)}^{(0)})=0.
\]
Similarly we obtain $w^{(2)}_{(4)}=y_1(\alpha_1y_1+\alpha_2z_2)(y_1z_1)=\alpha_1w_{(4)}^{(3)}+\alpha_2w_{(4)}^{(2)}=0$ and
\[
w^{(2)}_{(3,1)}=y_1(\alpha_1y_1+2\alpha_2z_1)(y_1z_2-y_2z_1)=\alpha_1w_{(3,1)}^{(2)}+2\alpha_2w_{(3,1)}^{(1)}=0
\]
by multiplication from the left.

{\rm (ii)} By substitution with elements of $W_d(2)$:
\[
v^{(3)}_{(4)}=\Delta_1(y_1z_1,w_{(3)}(y_1,z_1))=(\alpha_1y_1^2+2(\alpha_1+\alpha_2)y_1z_1+\alpha_2z_1^2)y_1z_1
\]
\[
=\alpha_1w_{(4)}^{(3)}+2(\alpha_1+\alpha_2)w_{(4)}^{(2)}+\alpha_2w_{(4)}^{(1)}=0,
\]
\[
v^{(3)}_{(3,1)}=(\alpha_1y_1^2-\alpha_2z_1^2)(y_1z_2-y_2z_1)=\alpha_1w_{(3,1)}^{(2)}-\alpha_2w_{(3,1)}^{(0)}=0,
\]
\[
v^{(1)}_{(2^2)}=-(\alpha_1+\alpha_2)(y_1z_2-y_2z_1)^2=-(\alpha_1+\alpha_2)w_{(2^2)}^{(0)}=0.
\]

{\rm (iii)} By substitution with elements of $W_d(1^2)$:
\[
v^{(4)}_{(3,1)}=\alpha_1w_{(3,1)}^{(2)}+2(\alpha_1+\alpha_2)w_{(3,1)}^{(1)}+\alpha_2w_{(3,1)}^{(0)}=0.
\]
\end{lemma}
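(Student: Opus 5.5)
Following Proposition \ref{consequences of degree n+1}, the consequences of degree $4$ of the $S_3$-module $M(3)$ generated by the linearization of (\ref{PI (3)}) fall into two groups. The first consists of products $Mx_4$ and $x_4M$. The second consists of substitutions of one variable by a product $x_3x_4$, split according to $P_2=M(2)\oplus M(1^2)$. By Example \ref{consequences in partitions}, $(M(3)\otimes M(1))\uparrow S_4=M(4)\oplus M(3,1)$ for each side. Also $((M(3)\downarrow S_2)\otimes M(2))\uparrow S_4=M(4)\oplus M(3,1)\oplus M(2^2)$ and $((M(3)\downarrow S_2)\otimes M(1^2))\uparrow S_4=M(3,1)\oplus M(2,1^2)$. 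The component $M(2,1^2)$ vanishes in $F({\mathfrak B})$ because all partitions there have at most two parts. Hence every consequence of degree $4$ lies in the sum of the highest weight vectors listed in (i)--(iii), and it suffices to compute, for each irreducible summand, one highest weight vector generating its image. I would mimic Example \ref{consequences of M(3) in associative case} word for word, replacing $K\langle X_d\rangle$ by $G_d$ and $u_{(3)}$ by $w_{(3)}(y_1,z_1)=(\alpha_1y_1+\alpha_2z_1)y_1z_1$.

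Key steps, in order:
\begin{itemize}
\item[(i)] Right multiplication by $x_1$ or $x_2$ multiplies by $z_1$ or $z_2$, and left multiplication multiplies by $y_1$ or $y_2$. The vector $\Delta_{12}(w_{(3)})z_1-3w_{(3)}z_2$ is killed by $\Delta_{12}$ because $\Delta_{12}^2(w_{(3)})$ terms cancel with $3\Delta_{12}(w_{(3)})z_2$ by the usual $\mathfrak{sl}_2$ computation. So it is a highest weight vector of weight $(3,1)$ by Criterion \ref{criterion hwv}. Expanding $\Delta_{12}(w_{(3)})=\alpha_1(2y_1y_2z_1+y_1^2z_2)+\alpha_2(y_2z_1^2+2y_1z_1z_2)$ and collecting gives the factor $(y_1z_2-y_2z_1)$ times $-(2\alpha_1y_1+\alpha_2z_1)z_1$. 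The left-hand analogues are computed the same way.
\item[(ii)] Substitution $x_1\mapsto x_1x_1$ acts as $y_1,z_1\mapsto y_1z_1$ on one copy of the variable. This is done through the derivation $\Delta_1(y_1z_1,\cdot)$, since the consequences of the linearization are exactly these derivation images. For the $(3,1)$ and $(2^2)$ components, I would use the linearization $h_{(3)}$ of $w_{(3)}$ in $G_d$. Into it I substitute the symmetric element $y_1z_2+y_2z_1$, the image of $x_1x_2+x_2x_1$, and then correct by $\Delta_1(\cdot)$ terms as in Example \ref{consequences of M(3) in associative case}(ii). I would verify the highest weight property with Criterion \ref{criterion hwv}, then factor out $(y_1z_2-y_2z_1)$ to express the result in the basis (\ref{hwv of W(lambda)}).
\item[(iii)] Substituting the image $y_1z_2-y_2z_1$ of $[x_1,x_2]$ through $\Delta_1([x_1,x_2],w_{(3)})$ gives $v^{(4)}_{(3,1)}$ after expansion.
\end{itemize}

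The main obstacle is step (ii), for the $(3,1)$ and $(2^2)$ vectors. The correction terms must be chosen so that $\Delta_{12}$ annihilates the result, and the bookkeeping of which copy of $y_1$ or $z_1$ is replaced is error-prone. I would first guess the vector from Lemma \ref{hwv of G}: it must be a combination of $w^{(j)}_{(3,1)}$, respectively a multiple of $w^{(0)}_{(2^2)}$. I would then fix the coefficients by comparing a single monomial, for instance the coefficient of $y_1^3z_1^{0}\cdots$ or of $y_1^2z_2^2$. This avoids explicit verification of $\Delta_{12}=0$. Finally, to justify ``equivalent'', I note that each irreducible summand maps onto a module whose highest weight vector is, up to scalar, the listed one. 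The latter is nonzero, or else it vanishes, as for $(2^2)$ when $\alpha_1+\alpha_2=0$, which is consistent.
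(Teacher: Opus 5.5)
Your proposal is correct and follows the paper's own route. You decompose the degree-$4$ consequences via Proposition~\ref{consequences of degree n+1} and Example~\ref{consequences in partitions}, transfer the highest weight vectors of Example~\ref{consequences of M(3) in associative case} into $G_d$, and expand them in the basis of Lemma~\ref{hwv of G}.

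One slip to fix: by Criterion~\ref{criterion hwv} the highest weight condition is $\Delta_{21}(v)=0$, not $\Delta_{12}(v)=0$. Indeed $\Delta_{12}$ does not annihilate $\Delta_{12}(w_{(3)})z_1-3w_{(3)}z_2$; what cancels is $\Delta_{21}\Delta_{12}(w_{(3)})z_1=3w_{(3)}z_1$. Also, for $\lambda=(3,1)$ you need three monomials, e.g. $y_1^3z_2$, $y_1^2z_1z_2$, $y_1z_1^2z_2$, rather than a single one, to determine the coefficients.
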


\begin{proposition}\label{chi4 for (3)}
The cocharacter $\chi_4({\mathfrak U}_{\alpha})$ of the variety of bicommutative algebras ${\mathfrak U}_{\alpha}$ defined by the identity {\rm (\ref{PI (3)})} is
\[
\chi_4({\mathfrak U}_{\alpha})=\begin{cases}
0,\text{ if }\alpha_1\alpha_2(\alpha_1+\alpha_2)(\alpha_1-\alpha_2)\not=0;\\
\chi_{(4)}+\chi_{(3,1)},\text{ if }\alpha_1\alpha_2=0;\\
\chi_{(4)}+\chi_{(2^2)},\text{ if }\alpha_1+\alpha_2=0;\\
\chi_{(3,1)}, \text{ if }\alpha_1-\alpha_2=0.
\end{cases}
\]
\end{proposition}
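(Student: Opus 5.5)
The plan is to compute $\chi_4({\mathfrak U}_{\alpha})$ as $\chi_4({\mathfrak B})$ minus the character of the degree-4 component of the T-ideal generated by (\ref{PI (3)}). By (\ref{multiplicities of cocharacters of B}), $\chi_4({\mathfrak B})=3\chi_{(4)}+3\chi_{(3,1)}+\chi_{(2^2)}$. By Lemma \ref{hwv of G}, the highest weight vectors are $w_{(4)}^{(1)},w_{(4)}^{(2)},w_{(4)}^{(3)}$ for $W_d(4)$, $w_{(3,1)}^{(0)},w_{(3,1)}^{(1)},w_{(3,1)}^{(2)}$ for $W_d(3,1)$, and $w_{(2^2)}^{(0)}$ for $W_d(2^2)$. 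Proposition \ref{consequences of degree n+1} and Lemma \ref{consequences of M(3) in G} show that the degree-4 consequences are generated, as $GL_d(K)$-modules, by the highest weight vectors listed there. So for each $\lambda\vdash 4$ the multiplicity $m_\alpha(\lambda)$ equals $m(\lambda)$ minus the rank of the coefficient matrix of these consequences in the basis above. The whole proof therefore reduces to three rank computations.

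\emph{For $\lambda=(4)$.} In the basis $(w^{(3)}_{(4)},w^{(2)}_{(4)},w^{(1)}_{(4)})$ the consequences have coordinates
\[
(0,\alpha_1,\alpha_2),\quad(\alpha_1,\alpha_2,0),\quad(\alpha_1,2(\alpha_1+\alpha_2),\alpha_2).
\]
Expanding the determinant, I expect it to be a nonzero multiple of $\alpha_1\alpha_2(\alpha_1+\alpha_2)$, possibly up to a factor I will verify. If $\alpha_1\alpha_2=0$, say $\alpha_2=0$, the vectors become $(0,\alpha_1,0),(\alpha_1,0,0),(\alpha_1,2\alpha_1,0)$, which have rank 2, so $m_\alpha(4)=1$; the case $\alpha_1=0$ is symmetric. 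If $\alpha_1+\alpha_2=0$, the third vector equals the sum of the first two, so the rank is again 2 and $m_\alpha(4)=1$.

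\emph{For $\lambda=(3,1)$.} In the basis $(w^{(2)}_{(3,1)},w^{(1)}_{(3,1)},w^{(0)}_{(3,1)})$ the coordinates are
\[
(0,2\alpha_1,\alpha_2),\quad(\alpha_1,2\alpha_2,0),\quad(\alpha_1,0,-\alpha_2),\quad(\alpha_1,2(\alpha_1+\alpha_2),\alpha_2).
\]
The last vector is the sum of the first two, so it can be dropped. The determinant of the first three is
\[
-2\alpha_1\alpha_2\cdot\alpha_1+\alpha_2(-2\alpha_1\alpha_2)\cdot(\pm 1)\ldots,
\]
which I will compute carefully; I expect it to be a multiple of $\alpha_1\alpha_2(\alpha_1-\alpha_2)$. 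If $\alpha_1=\alpha_2$, the third vector is the first minus the second, so the rank is 2 and $m_\alpha(3,1)=1$. If $\alpha_2=0$, the vectors are $(0,2\alpha_1,0),(\alpha_1,0,0),(\alpha_1,0,0)$, so the rank is 2; the case $\alpha_1=0$ behaves the same way. If $\alpha_1+\alpha_2=0$ but $\alpha_1\alpha_2(\alpha_1-\alpha_2)\neq0$, the rank is 3.

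\emph{For $\lambda=(2^2)$.} The only consequence is $-(\alpha_1+\alpha_2)w^{(0)}_{(2^2)}$, so $m_\alpha(2^2)=1$ exactly when $\alpha_1+\alpha_2=0$.

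Combining the three computations gives the four cases of the statement. The cases overlap only trivially: if $\alpha_1\alpha_2=0$ then $\alpha_1\pm\alpha_2\ne0$, and the conditions $\alpha_1=\pm\alpha_2$ are exclusive for $(\alpha_1,\alpha_2)\ne(0,0)$.

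The main obstacle is the claim that Lemma \ref{consequences of M(3) in G} exhausts all consequences. This requires that the highest weight vectors obtained via Proposition \ref{consequences of degree n+1} generate the full consequence modules. That claim is taken from the lemma, so the remaining work is only to double-check the determinants.
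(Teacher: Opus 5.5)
Your proposal is correct and follows the paper's own proof: the same three rank computations for $\lambda=(4),(3,1),(2^2)$, including dropping the fourth $(3,1)$-row as the sum of the first two. The determinants you left to verify are exactly $\alpha_1\alpha_2(\alpha_1+\alpha_2)$ and $2\alpha_1\alpha_2(\alpha_1-\alpha_2)$, with no extra factors, and your rank-$3$ claims depend on these values: the generic case, $\lambda=(3,1)$ for $\alpha_1+\alpha_2=0$, and $\lambda=(4)$ for $\alpha_1=\alpha_2$. Also, for $\alpha_1=\alpha_2$ the third $(3,1)$-vector is the second minus the first, not the first minus the second; this sign slip does not affect the rank.
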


\begin{proof}
The identities $v^{(i)}_{(4)}=0$, $i=1,2,3$, from Lemma \ref{consequences of M(3) in G} form the homogeneous linear system
\begin{equation}
\text{
\begin{tabular}{|rrrrl}\label{identities from (3) to (4)}
$v^{(1)}_{(4)}=$&&$\alpha_1w_{(4)}^{(2)}$&$+\alpha_2w_{(4)}^{(1)}$&=0\\
$v^{(2)}_{(4)}=$&$\alpha_1w_{(4)}^{(3)}$&$+\alpha_2w_{(4)}^{(2)}$&&=0\\
$v^{(3)}_{(4)}=$&$\alpha_1w_{(4)}^{(3)}$&$+2(\alpha_1+\alpha_2)w_{(4)}^{(2)}$&$+\alpha_2w_{(4)}^{(1)}$&=0\\
\end{tabular}
}
\end{equation}
The determinant of the matrix of the system (\ref{identities from (3) to (4)})
\[
\left(\begin{matrix}
0&\alpha_1&\alpha_2\\
\alpha_1&\alpha_2&0\\
\alpha_1&2(\alpha_1+\alpha_2)&\alpha_2
\end{matrix}\right)
\]
is equal to $\alpha_1\alpha_2(\alpha_1+\alpha_2)$. Hence, if $\alpha_1\alpha_2(\alpha_1+\alpha_2)\not=0$, then the system
(\ref{identities from (3) to (4)}) has the zero solution only. Hence $\chi_{(4)}$ does not participate in $\chi_4({\mathfrak U}_{\alpha})$.
If $\alpha_1\alpha_2(\alpha_1+\alpha_2)=0$, then the rank of the matrix is equal to 2, the system has one nonzero solution only
and $\chi_{(4)}$ participates with multiplicity 1 in $\chi_4({\mathfrak U}_{\alpha})$.

The case with identities $v^{(i)}_{(3,1)}=0$, $i=1,2,3,4$, is similar. We consider the linear system of equations
\begin{equation}
\text{
\begin{tabular}{|rrrrl}\label{identities from (3) to (3,1)}
$v^{(1)}_{(3,1)}=$&&$2\alpha_1w_{(3,1)}^{(1)}$&$+\alpha_2w_{(3,1)}^{(0)}$&=0\\
$v^{(2)}_{(3,1)}=$&$\alpha_1w_{(3,1)}^{(2)}$&$+2\alpha_2w_{(3,1)}^{(1)}$&&=0\\
$v^{(3)}_{(3,1)}=$&$\alpha_1w_{(3,1)}^{(2)}$&&$-\alpha_2w_{(3,1)}^{(0)}$&=0\\
$v^{(4)}_{(3,1)}=$&$\alpha_1w_{(3,1)}^{(2)}$&$+2(\alpha_1+\alpha_2)w_{(3,1)}^{(1)}$&$+\alpha_2w_{(3,1)}^{(0)}$&=0\\
\end{tabular}
}
\end{equation}
and want to determine the rank of its matrix
\[
\left(\begin{matrix}
0&2\alpha_1&\alpha_2\\
\alpha_1&2\alpha_2&0\\
\alpha_1&0&-\alpha_2\\
\alpha_1&2(\alpha_1+\alpha_2)&\alpha_2
\end{matrix}\right).
\]
The fourth row of the matrix is a sum of the first two and can be removed without changing the rank of the matrix.
Then the determinant of the matrix formed by the first three rows is equal to $2\alpha_1\alpha_2(\alpha_1-\alpha_2)$.
If $\alpha_1\alpha_2(\alpha_1-\alpha_2)\not=0$, then $\chi_{(3,1)}$ does not participate in $\chi_4({\mathfrak U}_{\alpha})$.
When $\alpha_1\alpha_2(\alpha_1-\alpha_2)=0$ the rank of the matrix is equal to 2 and
$\chi_{(3,1)}$ participates in $\chi_4({\mathfrak U}_{\alpha})$ with multiplicity 1.

Finally, by (\ref{multiplicities of cocharacters of B}), the multiplicity of $\chi_{(2^2)}$ in $\chi_4({\mathfrak U})_{\alpha}$
is $\leq 1$. The equation
\[
v^{(1)}_{(2^2)}=-(\alpha_1+\alpha_2)w_{(2^2)}^{(0)}=0
\]
shows that $\chi_{(2^2)}$ participates in $\chi_4({\mathfrak U}_{\alpha})$ if and only if $\alpha_1+\alpha_2=0$.
\end{proof}

\begin{corollary}\label{hwv in subvariety}
The highest weight vectors of degree $4$ in the free algebra of the variety ${\mathfrak U}_{\alpha}$ are:

{\rm (i)} $\alpha_1\alpha_2(\alpha_1+\alpha_2)(\alpha_1-\alpha_2)\not=0$: There are no nonzero polynomials of degree $4$;

{\rm (ii)} $\alpha_2=0$: $w_{(4)}^{(1)}$ and $w_{(3,1)}^{(0)}$;

{\rm (iii)} $\alpha_1=0$: $w_{(4)}^{(3)}$ and $w_{(3,1)}^{(2)}$;

{\rm (iv)} $\alpha_1+\alpha_2=0$: $w_{(4)}^{(1)}$ and $w_{(2^2)}^{(0)}$;

{\rm (v)} $\alpha_1-\alpha_2=0$: $w_{(3,1)}^{(2)}$.
\end{corollary}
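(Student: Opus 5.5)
The plan is to read off the surviving highest weight vectors from the linear systems (\ref{identities from (3) to (4)}) and (\ref{identities from (3) to (3,1)}) used in the proof of Proposition \ref{chi4 for (3)}. By Lemma \ref{hwv of G}, the degree-4 highest weight vectors of $G_d$ are the vectors $w_{(4)}^{(j)}$ ($j=1,2,3$), $w_{(3,1)}^{(j)}$ ($j=0,1,2$) and $w_{(2^2)}^{(0)}$. By Lemma \ref{consequences of M(3) in G}, the highest weight vectors in the T-ideal of ${\mathfrak U}_{\alpha}$ in each $\lambda$-component are spanned by the listed $v$'s. In the relatively free algebra the $\lambda$-isotypic highest weight space is therefore the quotient of the span of the $w_\lambda^{(j)}$ by the row space of the corresponding matrix. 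Whenever Proposition \ref{chi4 for (3)} gives multiplicity $1$, it suffices to exhibit one $w_\lambda^{(j)}$ not in the row space; its image is then a nonzero highest weight vector spanning the quotient.

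Case (i) is immediate from Proposition \ref{chi4 for (3)}, since the cocharacter vanishes. The remaining cases are handled one at a time.

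For $\alpha_2=0$ (so $\alpha_1\ne0$), the $(4)$-relations are $w_{(4)}^{(2)}=0$ and $w_{(4)}^{(3)}=0$, with $v^{(3)}$ reducing to a combination of these. Hence $w_{(4)}^{(1)}$ survives. For $(3,1)$ the relations are $w^{(1)}_{(3,1)}=0$ and $w^{(2)}_{(3,1)}=0$, so $w^{(0)}_{(3,1)}$ survives. The $(2^2)$ component is killed because $\alpha_1+\alpha_2\neq 0$.

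The case $\alpha_1=0$ is symmetric. The $(4)$-relations become $w_{(4)}^{(1)}=w_{(4)}^{(2)}=0$ and the $(3,1)$-relations become $w_{(3,1)}^{(1)}=w_{(3,1)}^{(0)}=0$, leaving $w_{(4)}^{(3)}$ and $w_{(3,1)}^{(2)}$.

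For $\alpha_1=-\alpha_2\ne0$, the $(4)$-relations give $w_{(4)}^{(2)}=w_{(4)}^{(1)}$ and $w_{(4)}^{(3)}=w_{(4)}^{(2)}$. All three are thus equal modulo the ideal, and $w_{(4)}^{(1)}$ is a nonzero representative. The coefficient $-(\alpha_1+\alpha_2)$ vanishes, so $w_{(2^2)}^{(0)}$ survives. The $(3,1)$ part is killed since $\alpha_1\alpha_2(\alpha_1-\alpha_2)\ne0$.

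For $\alpha_1=\alpha_2\ne0$, only $(3,1)$ survives. Its relations $2w^{(1)}+w^{(0)}=0$ and $w^{(2)}+2w^{(1)}=0$ give $w^{(0)}=w^{(2)}=-2w^{(1)}$, so $w_{(3,1)}^{(2)}$ is a nonzero representative.

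The only care needed is consistency of indexing. Specifically, one must check that the vector $(1,0,0)$, or the appropriate unit vector, is not in the row space in each degenerate case; this is a routine rank check. I would also double-check the sign and index conventions in $v^{(2)}_{(4)}$ from Lemma \ref{consequences of M(3) in G}, since the statement there contains a typo ($z_2$ for $z_1$).
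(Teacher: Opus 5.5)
Your proposal is correct and follows the paper's own argument: the surviving highest weight vectors are read off case by case from the linear systems for $\lambda=(4)$ and $\lambda=(3,1)$ in the proof of Proposition \ref{chi4 for (3)}, together with the single relation $-(\alpha_1+\alpha_2)w_{(2^2)}^{(0)}=0$ for $\lambda=(2^2)$. Your explicit case-by-case row reductions (for example $w_{(3,1)}^{(0)}=w_{(3,1)}^{(2)}=-2w_{(3,1)}^{(1)}$ when $\alpha_1=\alpha_2$) spell out the details that the paper's short proof leaves to the reader.
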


\begin{proof}
Depending on $\alpha_1$ and $\alpha_2$, it follows from the system (\ref{identities from (3) to (4)}) that all $w_{(4)}^{(i)}$ are proportional
to the highest weight vector given in the statement of the Corollary.
Similarly, the system (\ref{identities from (3) to (4)}) answers the case $\lambda=(3,1)$.
Finally, for $\lambda=(2^2)$ there is only one submodule $W_d(\lambda)$ in $G_d({\mathfrak U}_{\alpha})$ and it is generated by $w_{(2^2)}^{(0)}$.
\end{proof}

\subsection{$S_n$-module structure}
In this subsection we shall describe the $S_n$-module structure of $P_n({\mathfrak U}_{\alpha})$.
It is clear that
\[
P_1({\mathfrak U}_{\alpha})=M(1),\,P_2({\mathfrak U}_{\alpha})=M(2)+M(1^2),\,P_3({\mathfrak U}_{\alpha})=M(3)+2M(2,1)
\]
and it is sufficient to handle the cases $n\geq 3$.

{\bf 1. Case $\bm{\alpha_1\alpha_2(\alpha_1+\alpha_2)(\alpha_1-\alpha_2)\not=0}$.}
By Proposition \ref{chi4 for (3)} $\chi_4({\mathfrak U}_{\alpha})=0$ and this immediately implies that $\chi_n({\mathfrak U}_{\alpha})=0$ for all $n\geq 4$.

{\bf 2. Case $\bm{\alpha_2=0}$.} Then the variety ${\mathfrak U}_{\alpha}$ is defined by the identity $x(xx)=0$, i.e. $y_1(y_1z_1)=0$ in $G_d({\mathfrak U}_{\alpha})$.

\begin{proposition}\label{chi for alpha2=0}
When $\alpha_2=0$ we have
\[
P_n({\mathfrak U}_{\alpha})=M(n)+M(n-1,1),\; n\geq 4.
\]
The corresponding irreducible $GL_d(K)$-submodules $W_d(n)$ and $W_d(n-1,1)$ in $G_d({\mathfrak U}_{\alpha})$ are generated, respectively, by
$w_{(n)}^{(1)}$ and $w_{(n-1,1)}^{(0)}$.
\end{proposition}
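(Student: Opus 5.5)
We will prove the statement by induction on $n\geq 4$, working in $G_d({\mathfrak U}_{\alpha})$ with the identity $y_1(y_1z_1)=0$, i.e. $w_{(3)}^{(2)}=0$. For the base $n=4$ we use Corollary \ref{hwv in subvariety}(ii): the only surviving highest weight vectors of degree 4 are $w_{(4)}^{(1)}=y_1z_1^3$ and $w_{(3,1)}^{(0)}=(y_1z_2-y_2z_1)z_1^2$. Corollary \ref{bounds for cocharacters} shows that only $\lambda=(n)$, $(n-1,1)$, $(n-2,2)$ can occur, and Proposition \ref{chi4 for (3)} shows that $(2^2)$ is already killed in degree 4 when $\alpha_1+\alpha_2=\alpha_1\neq0$.

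For the inductive step we fix a convenient generating set of the T-ideal and check the candidates one by one. The highest weight vectors of degree $n$ are $w_{(n)}^{(j)}=y_1^jz_1^{n-j}$ ($1\le j\le n-1$), $w_{(n-1,1)}^{(j)}=y_1^j(y_1z_2-y_2z_1)z_1^{n-2-j}$ ($0\le j\le n-2$), and $w_{(n-2,2)}^{(j)}$. The key observation is that, since in $G_d$ left multiplication by $x_1$ multiplies by $y_1$ and right multiplication by $x_1$ multiplies by $z_1$, the T-ideal contains $y_1^2z_1\cdot m$ for every monomial $m$ of the appropriate form. Every $w_{(n)}^{(j)}$ with $j\geq 2$ is of the form $y_1^2z_1\cdot y_1^{j-2}z_1^{n-j-1}$, obtained from $w_{(3)}^{(2)}$ by left multiplications by $x_1$ and right multiplications by $x_1$. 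Hence it vanishes, and only $w_{(n)}^{(1)}$ survives.

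For $(n-1,1)$ we treat two ranges of $j$. For $j\geq 2$, $w_{(n-1,1)}^{(j)}$ is $y_1^{j-2}z_1^{n-2-j}$ times $w_{(3,1)}^{(2)}$, which is zero in degree 4; since the T-ideal is closed under multiplication by $y_1$ and $z_1$ (multiplication by $x_1$ from either side), these vanish. For $j=1$ we use $w_{(3,1)}^{(1)}$, which is a multiple of $w_{(3,1)}^{(0)}$ in degree 4 by system (\ref{identities from (3) to (3,1)}) with $\alpha_2=0$ (indeed $2\alpha_1w_{(3,1)}^{(1)}=0$). Multiplying by $z_1^{n-4}$ then kills $w_{(n-1,1)}^{(1)}$. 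The partition $(n-2,2)$ is handled in the same way: for $n\ge5$ each $w_{(n-2,2)}^{(j)}$ equals $(y_1z_2-y_2z_1)^2\cdot y_1^jz_1^{n-4-j}$, a multiple of $w_{(2^2)}^{(0)}=0$.

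It remains to show that $w_{(n)}^{(1)}=y_1z_1^{n-1}$ and $w_{(n-1,1)}^{(0)}=(y_1z_2-y_2z_1)z_1^{n-2}$ are nonzero modulo the T-ideal. This nonvanishing is the main obstacle. Our plan is to exhibit an algebra in ${\mathfrak U}_{\alpha}$ on which these do not vanish. We take the subalgebra of $G_d$ (or a quotient of it) spanned by the $x_i$ and the monomials $Y^{\alpha}Z^{\beta}$ with $|\alpha|=1$, which is the quotient by the ideal of elements of $y$-degree $\geq2$. This ideal is closed under multiplication, because multiplication never lowers the $y$-degree, and it is invariant under the substitutions described earlier, since these send $y_i$ to $y$-degree $\geq1$ elements. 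Hence it is a T-ideal containing $y_1^2z_1$, and the two vectors above are nonzero monomials outside it. Consequently each multiplicity is exactly 1, giving $P_n({\mathfrak U}_{\alpha})=M(n)+M(n-1,1)$ with the stated generators.
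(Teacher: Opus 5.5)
Your proposal is correct and follows essentially the same route as the paper: you kill $w_{(n)}^{(j)}$ ($j\geq 2$), $w_{(n-1,1)}^{(j)}$ ($j\geq 1$) and $w_{(n-2,2)}^{(j)}$ by two-sided multiplication of the identities $w_{(3)}^{(2)}=w_{(3,1)}^{(2)}=w_{(3,1)}^{(1)}=w_{(2^2)}^{(0)}=0$, and you get nonvanishing of $w_{(n)}^{(1)}$ and $w_{(n-1,1)}^{(0)}$ because every consequence of $y_1^2z_1=0$ has $Y$-degree at least $2$; your explicit T-ideal of elements of $Y$-degree $\geq 2$ just makes the paper's one-line argument precise. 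The remaining issues are cosmetic: no induction is actually used, $w_{(3,1)}^{(1)}$ is simply zero (which is what you need, not merely that it is a multiple of $w_{(3,1)}^{(0)}$), the quotient you build is not a subalgebra, and $w_{(n-1,1)}^{(0)}$ is a nonzero combination of $Y$-degree-$1$ monomials rather than a single monomial.
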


\begin{proof}
By Corollary \ref{bounds for cocharacters} it is sufficient to consider the cases $\lambda=(\lambda_1,\lambda_2)$ with $\lambda_2=0,1,2$.
The identities (\ref{identities from (3) to (4)}) imply $w_{(4)}^{(3)}=w_{(4)}^{(2)}=0$
and the $GL_d(K)$-submodule $W_d(4)$ in $G_d({\mathfrak U}_{\alpha})$ is generated by $w_{(4)}^{(1)}=y_1z_1^3$.
Similarly, for $n\geq 5$ and $j\geq 2$ any $w_{(n)}^{(j)}$ can be obtained by multiplication from both sizes of $w_{(3)}^{(2)}$.
Hence $W_d(n)\subset G_d({\mathfrak U}_{\alpha})$, $n\geq 5$, is generated by $w_{(n)}^{(1)}$.
Similarly, by (\ref{identities from (3) to (3,1)}) $w_{(3,1)}^{(2)}=w_{(3,1)}^{(1)}=0$, $w_{(3,1)}^{(0)}\not=0$ and we obtain that
$w_{(n-1,1)}^{(j)}=0$ in $G_d({\mathfrak U}_{\alpha})$ for all $n\geq 5$ and $j\geq 1$. Hence $W_d(n-1,1)\subset G_d({\mathfrak U}_{\alpha})$, $n\geq 5$, is generated by $w_{(n-1,1)}^{(0)}$.
It is easy to see that $w_{(n)}^{(1)}$ and $w_{(n-1,1)}^{(0)}$ are nonzero in $G_d({\mathfrak U}_{\alpha})$ because $\deg_{Y_d}(w_{(n)}^{(1)})=\deg_{Y_d}(w_{(n-1,1)}^{(0)})=1$
and for all consequences of $y_1^2z_1=0$ in $G_d({\mathfrak U}_{\alpha})$ we have $\deg_{Y_d}\geq 2$.
Finally, $w_{(2^2)}^{(0)}=0$ in $G_d({\mathfrak U}_{\alpha})$ implies $w_{(n-2,2)}^{(j)}=0$ for all $n\geq 4$.
\end{proof}

{\bf 3. Case $\bm{\alpha_1=0}$.} This case is similar to the case $\alpha_2=0$.

{\bf 4. Case $\bm{\alpha_1+\alpha_2=0}$.} Then the variety ${\mathfrak U}_{\alpha}$ is defined by the identity
\[
x(xx)-(xx)x=0\;(y_1^2z_1-y_1z_1^2=0\text{ in }G_d({\mathfrak U}_{\alpha})).
\]

\begin{proposition}\label{chi for alpha1+alpha2=0}
When $\alpha_1+\alpha_2=0$ we have
\[
P_4({\mathfrak U}_{\alpha})=M(4)+M(2^2),\; P_n({\mathfrak U}_{\alpha})=M(n),\,n\geq 5.
\]
The corresponding irreducible $GL_d(K)$-submodule $W_d(n)$ in $G_d({\mathfrak U}_{\alpha})$, $n\geq 4$, is generated by $w_{(n)}^{(1)}$.
\end{proposition}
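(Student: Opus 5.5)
By Corollary \ref{bounds for cocharacters} only partitions $\lambda=(\lambda_1,\lambda_2)$ with $\lambda_2\le 2$ can occur, so it suffices to treat $W_d(n)$, $W_d(n-1,1)$ and $W_d(n-2,2)$ in $G_d({\mathfrak U}_{\alpha})$, where the defining identity is $y_1z_1(y_1-z_1)=0$. We use $\alpha_1=1=-\alpha_2$ throughout.

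For degree $4$, Proposition \ref{chi4 for (3)} already gives $P_4({\mathfrak U}_{\alpha})=M(4)+M(2^2)$. By Corollary \ref{hwv in subvariety}(iv), $W_d(4)$ is generated by $w_{(4)}^{(1)}$ and $W_d(2^2)$ by $w_{(2^2)}^{(0)}$. Concretely, the system (\ref{identities from (3) to (4)}) gives $w_{(4)}^{(3)}=w_{(4)}^{(2)}=w_{(4)}^{(1)}$. The system (\ref{identities from (3) to (3,1)}) has rank $3$ here, since its determinant $2\alpha_1\alpha_2(\alpha_1-\alpha_2)=-4\neq 0$, so all $w_{(3,1)}^{(j)}=0$.

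For $n\ge 5$ we argue as follows.

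\emph{Partitions $(n-1,1)$ and $(n-2,2)$.} Every $w_{(n-1,1)}^{(j)}=y_1^j(y_1z_2-y_2z_1)z_1^{n-2-j}$ is obtained from some $w_{(3,1)}^{(k)}$ by multiplying on the left by $y_1$ (i.e. by $x_1$) and on the right by $z_1$. Since all $w_{(3,1)}^{(k)}$ vanish, so do all $w_{(n-1,1)}^{(j)}$. For $(n-2,2)$ we have $w_{(n-2,2)}^{(j)}=y_1^j(y_1z_2-y_2z_1)^2z_1^{n-4-j}$. It is obtained from $w_{(2^2)}^{(0)}$ by such multiplications only when $n-4\ge 0$, so we need to show $w_{(5-2,2)}^{(j)}=w_{(3,2)}^{(j)}=0$ for $j=0,1$. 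The plan is to compute the degree-5 consequences of the $(3,1)$- and $(4)$-level identities landing in $W_d(3,2)$. Substituting $x_1\mapsto x_1x_2$-type elements into the degree-$3$ identity (Proposition \ref{consequences of degree n+1}(ii)) gives $(y_1z_2+y_2z_1)$-type substitutions into $y_1z_1(y_1-z_1)$. A highest weight vector in $(3,2)$ can be produced from $w_{(3)}$ by a substitution $x_1\mapsto$ a $W(2,1)$-element. Alternatively and more simply, $w_{(3,2)}^{(j)}$ can be produced by multiplying the degree-4 relation $w_{(3,1)}^{(k)}=0$ by $y_2$ or $z_2$ and then applying Criterion \ref{criterion hwv} to extract the highest weight component. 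I expect this step, showing that $w_{(2^2)}^{(0)}$ dies after one more multiplication, to be the main obstacle. What I would try first: $(y_1z_2-y_2z_1)\,w_{(2,1)}$-type products, i.e. $w_{(3,1)}^{(1)}$ with $z_1$ replaced suitably, combined with linearizations $\Delta_{12}$ of $w_{(4)}$-relations.

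\emph{Partition $(n)$.} In $G_d({\mathfrak U}_{\alpha})$ we have $y_1^2z_1=y_1z_1^2$, and multiplying on both sides gives $y_1^jz_1^{n-j}=y_1^{j-1}z_1^{n-j+1}$ whenever $j\ge 2$ and $n-j\ge 1$. Hence all $w_{(n)}^{(j)}$ equal $w_{(n)}^{(1)}$, so $W_d(n)$ is generated by $w_{(n)}^{(1)}$. It remains to show $w_{(n)}^{(1)}\ne 0$. For this we exhibit an algebra in ${\mathfrak U}_{\alpha}$ where $x^n\ne 0$ in this bracketing. The natural candidate is the quotient $G_d/(y_i-z_i)$-type, i.e. the commutative associative algebra $\omega(K[t])$ with $x_i\mapsto t$ (so $y_i,z_i\mapsto t$). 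It is bicommutative, satisfies $x(xx)=(xx)x$, and the image of $y_1z_1^{n-1}$ is $t^n\ne0$. This gives $P_n=M(n)$ for $n\ge5$.
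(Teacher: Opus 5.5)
Your treatment of $\lambda=(n)$ and $\lambda=(n-1,1)$ is correct and matches the paper. Your witness $\omega(K[t])$ for $w_{(n)}^{(1)}\neq 0$ works as well as the paper's one-dimensional algebra with $a^2=a$, since every commutative associative algebra lies in ${\mathfrak U}_{\alpha}$ when $\alpha_1+\alpha_2=0$.

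The gap is the partition $(n-2,2)$. Everything depends on showing $w_{(3,2)}^{(0)}=w_{(3,2)}^{(1)}=0$ in $G_d({\mathfrak U}_{\alpha})$. You leave this as an expected obstacle with a list of things you would try, so $P_n({\mathfrak U}_{\alpha})=M(n)$ for $n\geq 5$ is not actually proved. Since $w_{(2^2)}^{(0)}$ itself survives in degree $4$, nothing can be obtained from $w_{(2^2)}^{(0)}$ alone. The degree-$5$ relation has to come from the vanishing of $W_d(3,1)$.

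The missing step is short, and your suggestion of multiplying the $(3,1)$-relations by $x_2$ is the right one. No extraction of highest weight components via Criterion \ref{criterion hwv} is needed. In $G_d^2$ (a polynomial algebra), left and right multiplication by $x_2$ multiply by $y_2$ and $z_2$ respectively. Hence
\[
w_{(3,2)}^{(1)}=y_1(y_1z_2-y_2z_1)^2=w_{(3,1)}^{(2)}z_2-y_2\,w_{(3,1)}^{(1)},\qquad
w_{(3,2)}^{(0)}=(y_1z_2-y_2z_1)^2z_1=w_{(3,1)}^{(1)}z_2-y_2\,w_{(3,1)}^{(0)}.
\]
That is, $w_{(3,2)}^{(1)}=w_{(3,1)}^{(2)}x_2-x_2w_{(3,1)}^{(1)}$, and similarly for $j=0$. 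Both vanish because all $w_{(3,1)}^{(k)}$ vanish. Then every $w_{(n-2,2)}^{(j)}$ with $n\geq 6$ equals $y_1^{a}w_{(3,2)}^{(i)}z_1^{b}$ for suitable $i,a,b$, so it is zero as well. This is exactly the paper's argument, and it is the piece your proposal needs in order to be complete.
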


\begin{proof}
The equality $P_4({\mathfrak U}_{\alpha})=M(4)+M(2^2)$ follows from Proposition \ref{chi4 for (3)}.
The equality $y_1^2z_1=y_1z_1^2$ in $G_d({\mathfrak U}_{\alpha})$ implies that
\[
y_1^pz_1^{n-p}=y_1^qz_1^{n-q},\,n\geq 3,\,p,q=1,\ldots,n-1.
\]
On the other hand, the one-dimensional algebra $A$ with basis $\{a\}$ and multiplication $a^2=a$ belongs to ${\mathfrak U}_{\alpha}$ and
the image of $\varphi(y_1z_1^{n-1})=a\not=0$ under the homomorphism $\varphi:G_d({\mathfrak U}_{\alpha})\to A$ which sends the monomials of $G_d({\mathfrak U}_{\alpha})$ to $a$ shows that
$\chi_{(n)}$ participates with multiplicity 1 in $\chi_n({\mathfrak U}_{\alpha})$.
Since $\chi_{(3,1)}$ does not participate in $\chi_4({\mathfrak U}_{\alpha})$, this means that
$y_1^pz_1^{2-p}(y_1z_2-y_2z_1)=0$ in $G_d({\mathfrak U}_{\alpha})$ for $p=0,1,2$, we have $y_1^qz_1^r(y_1z_2-y_2z_1)=0$ for all $q+r\geq 2$
and $\chi_{(n-1,1)}$ does not participate in $\chi_p({\mathfrak U}_{\alpha})$ for $n\geq 4$. To complete the proof it is sufficient to show that
$w_{(3,2)}^{(j)}=0$ in $G_d({\mathfrak U}_{\alpha})$ for $j=0,1$, i.e. $y_1(y_1z_2-y_2z_1)^2=(y_1z_2-y_2z_1)^2z_1=0$.
Since $w_{(3,1)}^{(i)}=y_1^iz_1^{2-i}(y_1z_2-y_2z_1)=0$, we obtain
\[
w_{(3,2)}^{(1)}=y_1(y_1z_2-y_2z_1)^2=y_1(y_1z_2-y_2z_1)y_1\cdot z_2-y_1(y_1z_2-y_2z_1)z_1\cdot y_2=0
\]
and similarly $w_{(3,2)}^{(0)}=0$.
\end{proof}

{\bf 5. Case $\bm{\alpha_1-\alpha_2=0}$.} In this case $y_1^2z_1=-y_1z_1^2$ and
$\chi_4({\mathfrak U)}_{\alpha}=\chi_{(3,1)}$.

\begin{proposition}\label{chi for alpha1-alpha2=0}
When $\alpha_1-\alpha_2=0$ we have $\chi_n({\mathfrak U}_{\alpha})=0$ for $n\geq 5$.
\end{proposition}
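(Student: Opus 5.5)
The plan is to show that $G_d({\mathfrak U}_{\alpha})$ has no nonzero elements of degree $5$. Since every element of degree $n\geq 5$ of $G_d$ is a linear combination of products of elements of degree $n-1$ with variables (equivalently, of monomials $Y_d^{\alpha}Z_d^{\beta}$ divisible by a monomial of total degree $5$ lying in $G_d^2$), the vanishing in degree $5$ then gives $\chi_n({\mathfrak U}_{\alpha})=0$ for all $n\geq 5$.

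By Corollary \ref{bounds for cocharacters} it is enough to kill the highest weight vectors for $\lambda=(5),(4,1),(3,2)$.

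\emph{The case $\lambda=(5)$.} By Proposition \ref{chi4 for (3)}, all $w_{(4)}^{(j)}$ vanish in $G_d({\mathfrak U}_{\alpha})$. Every $w_{(5)}^{(j)}$ can be written as one of the products
\[
y_1\cdot w_{(4)}^{(j-1)}\quad (j\geq 2),\qquad w_{(4)}^{(1)}z_1\quad (j=1).
\]
Hence $w_{(5)}^{(j)}=0$ for all $j$.

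\emph{The case $\lambda=(3,2)$.} Since $w_{(2^2)}^{(0)}=0$ by Proposition \ref{chi4 for (3)}, we get
\[
w_{(3,2)}^{(1)}=y_1w_{(2^2)}^{(0)}=0,\qquad w_{(3,2)}^{(0)}=w_{(2^2)}^{(0)}z_1=0.
\]

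\emph{The case $\lambda=(4,1)$.} This is the real point, because in degree $4$ the module $W_d(3,1)$ survives. I would first solve the system (\ref{identities from (3) to (3,1)}) with $\alpha_1=\alpha_2\neq 0$. It gives $2w_{(3,1)}^{(1)}+w_{(3,1)}^{(0)}=0$ and $w_{(3,1)}^{(2)}=w_{(3,1)}^{(0)}$. Thus, writing $c=w_{(3,1)}^{(2)}$, we have
\[
w_{(3,1)}^{(0)}=c,\qquad w_{(3,1)}^{(1)}=-\tfrac12 c,\qquad w_{(3,1)}^{(2)}=c
\]
in $G_d({\mathfrak U}_{\alpha})$.

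Put $a_j=w_{(4,1)}^{(j)}=y_1^j(y_1z_2-y_2z_1)z_1^{3-j}$, $j=0,1,2,3$. Left multiplication gives $a_{j+1}=y_1w_{(3,1)}^{(j)}$, so
\[
a_1=y_1c=a_3,\qquad a_2=-\tfrac12 a_1.
\]
Right multiplication gives $a_j=w_{(3,1)}^{(j)}z_1$, so
\[
a_0=a_2,\qquad a_1=-\tfrac12 a_2.
\]
Combining the two relations between $a_1$ and $a_2$ yields $a_1=\tfrac14 a_1$, hence $a_1=0$ in characteristic $0$. Then $a_2=a_0=a_3=0$ as well, so $W_d(4,1)$ vanishes in $G_d({\mathfrak U}_{\alpha})$.

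The only delicate step is this $(4,1)$ computation. There the degree-$4$ vectors are proportional but nonzero, and the proof relies on the left and right multiplications imposing incompatible proportionality constants. I would double-check the signs in (\ref{identities from (3) to (3,1)}) at that step.
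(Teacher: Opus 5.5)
Your proposal is correct and follows essentially the same route as the paper. The modules for $(n)$ and $(n-2,2)$ vanish because $W_d(4)$ and $W_d(2^2)$ already vanish in degree $4$; for $(4,1)$ the paper multiplies $2w_{(3,1)}^{(1)}+w_{(3,1)}^{(0)}=0$ and $w_{(3,1)}^{(2)}+2w_{(3,1)}^{(1)}=0$ by $y_1$ and by $z_1$ and checks that the resulting $4\times 4$ system in the $w_{(4,1)}^{(j)}$ has nonzero determinant, which is exactly what your elimination $a_1=\tfrac14 a_1$ does by hand (your remark that vanishing in degree $5$ propagates to all higher degrees just makes explicit a step the paper leaves implicit).
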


\begin{proof}
Since $w_{(4)}^{(i)}=0$, $i=1,2,3$, and $w_{(2^2)}^{(0)}=0$ we derive for $n\geq 5$ that
$w_{(n)}^{(i)}=0$, $i=1,\ldots,n-1$, and $w_{(n-2,2)}^{(j)}=0$, $j=0,1,\ldots,n-4$, i.e.
$\chi_{(n)}$ and $\chi_{(n-2,2)}$ do not participate in $\chi_n({\mathfrak U}_{\alpha})$.
By (\ref{identities from (3) to (3,1)}) we have
\[
2w_{(3,1)}^{(1)}+w_{(3,1)}^{(0)}=w_{(3,1)}^{(2)}+2w_{(3,1)}^{(1)}=0
\]
which after multiplication by $y_1$ and $z_1$ implies the linear system
\[
\begin{split}
2w_{(4,1)}^{(2)}+w_{(4,1)}^{(1)}=w_{(4,1)}^{(3)}+2w_{(4,1)}^{(2)}=0\\
2w_{(4,1)}^{(1)}+w_{(4,1)}^{(0)}=w_{(4,1)}^{(2)}+2w_{(4,1)}^{(1)}=0.
\end{split}
\]
The determinant of the matrix of the system
\[
\left(
\begin{matrix}
0&2&1&0\\
1&2&0&0\\
0&0&2&1\\
0&1&2&0
\end{matrix}
\right)
\]
is nonzero and hence $\chi_{(n-1,1)}$ does not participate in $\chi_n({\mathfrak U}_{\alpha})$.
Therefore $\chi_n({\mathfrak U}_{\alpha})=0$ for $n\geq 5$.
\end{proof}

\subsection{The lattice of subvarieties}
As in \cite{VDr} we shall describe the lattice of subvarieties of the variety ${\mathfrak U}_{\alpha}$ in the language of graph theory.
We shall define an oriented graph $\Gamma({\mathfrak U}_{\alpha})$.
The vertices of the graph are the irreducible $S_n$-submodules $M(\lambda)$, $\lambda\vdash n$.
Two vertices $M(\lambda)$, $\lambda\vdash n$, and $M(\mu)$, $\mu\vdash n+1$ are connected with an oriented edge
with beginning $M(\lambda)$ and ending $M(\mu)$ if the identities in $M(\mu)$ are consequences of the identities of $M(\lambda)$.
If $\mathfrak M$ is a subvariety of ${\mathfrak U}_{\alpha}$ with T-ideal $T({\mathfrak M})\subset F({\mathfrak U}_{\alpha})$,
then the graph $\Gamma_0({\mathfrak M})$ is the subgraph of $\Gamma({\mathfrak U}_{\alpha})$ with vertices corresponding to the $S_n$-submodules $M(\lambda)\subset T({\mathfrak M})$.
Then the subvarieties of ${\mathfrak U}_{\alpha}$ are in one-to-one correspondence with the subgraphs of $\Gamma({\mathfrak U}_{\alpha})$ and
\[
\Gamma_0(\text{\rm var}({\mathfrak M}_1\cup{\mathfrak M}_2))=\Gamma_0({\mathfrak M}_1)\cap \Gamma_0({\mathfrak M}_2),\;
\Gamma_0({\mathfrak M}_1\cap{\mathfrak M}_2)=\Gamma_0({\mathfrak M}_1)\cup \Gamma_0({\mathfrak M}_2).
\]

In the description of the subvarieties of ${\mathfrak U}_{\alpha}$ we use the highest weight vectors given in Corollary \ref{hwv in subvariety}
and the propositions in the previous subsection for the $S_n$-module structure of $P_n({\mathfrak U}_{\alpha})$.

{\bf 1. Case $\bm{\alpha_1\alpha_2(\alpha_1+\alpha_2)(\alpha_1-\alpha_2)\not=0}$.}

\begin{theorem}\label{lattice case 1}
When $\alpha_1\alpha_2(\alpha_1+\alpha_2)(\alpha_1-\alpha_2)\not=0$ the consequences in $G_d({\mathfrak U}_{\alpha})$ of degree $n+1$ of the polynomial $w_{\lambda}^{(n)}$, $\lambda\vdash n$,
are equivalent to the following polynomials:

{\rm (i)} For $w_{(1)}=x_1$: $w_{(2)}^{(1)}=y_1z_1$, $w_{(1^2)}^{(0)}=y_1z_2-y_2z_1$;

{\rm (ii)} For $w_{(2)}^{(1)}=y_1z_1$: $w_{(3)}^{(2)}=y_1^2z_1$, $w_{(2,1)}^{(1)}$, $w_{(2,1)}^{(0)}$;

{\rm (iii)} For $w_{(1^2)}^{(0)}=y_1z_2-y_2z_1$: $w_{(3)}^{(2)}=y_1^2z_1$, $w_{(2,1)}^{(1)}$, $w_{(2,1)}^{(0)}$.

Hence an identity of degree $1$ or $2$ implies, respectively, all polynomials of degree $2$ and $3$ in $G_d({\mathfrak U}_{\alpha})$.

{\rm (iv)} The polynomials $w_{(3)}^{(2)}$ and $\beta_1w_{(2,1)}^{(1)}+\beta_2w_{(2,1)}^{(0)}$, $(\beta_1,\beta_2)\not=(0,0)$, do not have consequences of degree $4$.
\end{theorem}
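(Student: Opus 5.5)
The plan is to work entirely in $G_d({\mathfrak U}_{\alpha})$ and to compute, for each highest weight vector of degree $n\le 3$, the highest weight vectors of degree $n+1$ produced by the two mechanisms of Proposition \ref{consequences of degree n+1}. These are one-sided multiplication by $y_i$ or $z_i$ (realized in $G_d$ through $x_i(\cdot)$ and $(\cdot)x_i$), and substitution of a variable by a product. Each result is then reduced modulo the defining identity $\alpha_1w_{(3)}^{(2)}+\alpha_2w_{(3)}^{(1)}=0$. Example \ref{consequences in partitions} tells us which $\lambda\vdash n+1$ can occur. Criterion \ref{criterion hwv} produces the explicit highest weight vectors, exactly as in Lemma \ref{consequences of M(3) in G}. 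These are expressed in the basis (\ref{hwv of W(lambda)}) of Lemma \ref{hwv of G}.

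For (i), $x_1$ multiplied by $x_1$ gives $y_1z_1=w_{(2)}^{(1)}$. The element $x_1x_2$ decomposes into symmetric and skew parts, giving $w_{(2)}^{(1)}$ and $w_{(1^2)}^{(0)}$. These are all the highest weight vectors of degree 2, since $P_2=M(2)+M(1^2)$. For (ii), multiplying $y_1z_1$ by $x_1$ on the left and right gives $y_1^2z_1$ and $y_1z_1^2$. Modulo the defining identity they are proportional, and $y_1^2z_1$ is nonzero because $\chi_3({\mathfrak U}_{\alpha})$ contains $\chi_{(3)}$. Next, $x_1$-multiplication applied to $\Delta_{12}(y_1z_1)$, corrected as in Lemma \ref{consequences of M(3) in G}(i), yields $y_1(y_1z_2-y_2z_1)$ and $(y_1z_2-y_2z_1)z_1$. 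These are $w_{(2,1)}^{(1)}$ and $w_{(2,1)}^{(0)}$, which are linearly independent, and together they span the two-dimensional space of $(2,1)$ highest weight vectors. Case (iii) is handled the same way: $x_1(y_1z_2-y_2z_1)=y_1(y_1z_2-y_2z_1)$ and $(y_1z_2-y_2z_1)x_1$ give both $(2,1)$-vectors. To get $w_{(3)}^{(2)}$, substitute $x_2\mapsto x_1x_1$ into $y_1z_2-y_2z_1$, which gives $y_1\cdot y_1z_1-y_1z_1\cdot z_1=w_{(3)}^{(2)}-w_{(3)}^{(1)}$. Modulo the identity this equals $(1+\alpha_1/\alpha_2)w_{(3)}^{(2)}$, and this is nonzero precisely because $\alpha_1+\alpha_2\neq 0$. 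This is one place where the hypothesis on $\alpha$ enters. The concluding sentence of the statement follows because all degree 2 and degree 3 highest weight vectors of $G_d({\mathfrak U}_{\alpha})$ are then obtained.

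For (iv), Proposition \ref{chi4 for (3)} with $\alpha_1\alpha_2(\alpha_1+\alpha_2)(\alpha_1-\alpha_2)\neq0$ gives $\chi_4({\mathfrak U}_{\alpha})=0$, so $G_d({\mathfrak U}_{\alpha})$ has no nonzero elements of degree 4. Every consequence of degree 4 therefore vanishes automatically, which is the meaning of ``no consequences''. Since $\chi_n=0$ for $n\ge4$, nothing further is needed.

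The main obstacle is the bookkeeping in (ii) and (iii). We must check that the two obtained $(2,1)$ vectors are genuinely independent in $G_d({\mathfrak U}_{\alpha})$, and that the $(3)$ component is not killed. Independence holds because the defining identity lives only in $W(3)$, so $P_3({\mathfrak U}_{\alpha})=M(3)+2M(2,1)$ as noted earlier. I expect the only delicate point to be getting the correct nonvanishing coefficient in the substitution step, and tracing it to the condition $\alpha_1+\alpha_2\neq0$ (or $\alpha_2\neq 0$ when the left-multiplication vector is used).
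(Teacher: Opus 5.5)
Your proposal is correct and follows essentially the same route as the paper. The $(2,1)$ vectors come from one-sided multiplication of $y_1z_1$ (with the $\Delta_{12}$ correction) and of $y_1z_2-y_2z_1$. In (iii), the substitution $x_2\mapsto x_1x_1$ gives $w_{(3)}^{(2)}-w_{(3)}^{(1)}$, which is a nonzero multiple of $w_{(3)}^{(2)}$ exactly because $\alpha_1+\alpha_2\neq 0$, with $\alpha_2\neq 0$ ensuring that $w_{(3)}^{(2)}$ generates $W_d(3)$. Finally, (iv) is read off from $\chi_4({\mathfrak U}_{\alpha})=0$ in Proposition \ref{chi4 for (3)}.
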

\vskip-2truecm
\[
\begin{picture}(80,110)
\put(0,0){\circle*{3}} \put(0,20){\circle*{3}}
\put(0,40){\circle*{3}}

\put(30,20){\circle*{3}} \put(30,40){\circle*{3}}
\put(60,40){\circle*{3}}

\put(0,0){\vector(0,2){20}}\put(-12,-3){\tiny{$f_1$}}
\put(0,20){\vector(0,2){20}}\put(-12,17){\tiny{$f_2$}}
\put(-12,45){\tiny{$f_3$}}

\put(0,0){\vector(3,2){30}}\put(35,19){\tiny{$g_2$}}
\put(30,45){\tiny{$g_3$}}
\put(60,45){\tiny{$h_3$}}

\put(0,20){\vector(3,2){30}}
\put(0,20){\line(3,1){30}} \put(30,30){\vector(3,1){30}}
\put(30,20){\vector(0,2){20}} \put(30,20){\vector(-3,2){30}}
\put(30,20){\vector(3,2){30}}
\put(30,40){\line(3,0){30}}

\put(-90,-20){\tiny{$f_1=\omega_{(1)}, f_2=\omega_{(2)}^{(1)},
f_3=\omega_{(3)}^{(2)}, g_2=\omega_{(1^2)}^{(0)}, g_3=\omega_{(2,1)}^{(1)}, h_3=\omega_{(2,1)}^{(0)}$}}

\put(-10,-35){\tiny{$\lambda=(3)$, Case 1}}
\end{picture}
\]
\vskip1.5truecm
\begin{proof}
By Lemma \ref{hwv of G} the highest weight vectors of the $GL_d(K)$-submodules $W_d(\lambda)$ of $G_d({\mathfrak U}_{\alpha})$, $\lambda\vdash n$, $n=1,2,3$, are generated by:

$n=1$: $w_{(1)}$;

$n=2$: $w_{(2)}^{(1)}$, $w_{(1^2)}^{(0)}$;

$n=3$: $w_{(3)}^{(2)}$ (and $w_{(3)}^{(1)}$ is proportional to $w_{(3)}^{(2)}$ because $\alpha_2\not=0$), $\beta_1w_{(2,1)}^{(1)}+\beta_2w_{(2,1)}^{(0)}$, $(\beta_1,\beta_2)\not=(0,0)$.

It is obvious that all homogeneous polynomials of degree 2 in $G_d({\mathfrak U}_{\alpha})$ follow from $w_{(1)}$.
Proposition \ref{chi4 for (3)} shows that the homogeneous polynomials of degree 3 do not have nonzero consequences of degree 4.

For the consequences of $w_{(2)}^{(1)}=y_1z_1$:
\[
w_{(3)}^{(2)}=y_1w_{(2)}^{(1)}(y_1,z_1),\, w_{(3)}^{(1)}=w_{(2)}^{(1)}(y_1,z_1)z_1,
\]
\[
w_{(2,1)}^{(1)}=y_1\Delta_{12}(w_{(2)}^{(1)}(y_1,z_1)-2y_2w_{(2)}^{(1)}(y_1,z_1)),
\]
\[
w_{(2,1)}^{(0)}=-\Delta_{12}(w_{(2)}^{(1)}(y_1,z_1)z_1+2w_{(2)}^{(1)}(y_1,z_1))z_2.
\]

For the consequences of $w_{(1^2)}^{(0)}=y_1z_2-y_2z_1$:
\[
w_{(1^2)}^{(0)}(y_1,y_1z_1,z_1,y_1z_1)=w_{(3)}^{(2)}-w_{(3)}^{(1)}.
\]
Since $\alpha_1w_{(3)}^{(2)}+\alpha_2w_{(3)}^{(1)}=0$ in $G_d({\mathfrak U}_{\alpha})$ and $\alpha_1+\alpha_2\not=0$, we obtain that both
$w_{(3)}^{(2)}$ and $w_{(3)}^{(1)}$ follow from $w_{(1^2)}^{(0)}$.
\[
w_{(2,1)}^{(1)}=y_1w_{(1^2)}^{(0)}, w_{(2,1)}^{(0)}=w_{(1^2)}^{(0)}z_1
\]
and $w_{(2,1)}^{(1)}$ and $w_{(2,1)}^{(0)}$ also follow from $w_{(1^2)}^{(0)}$.
\end{proof}

{\bf 2. Case $\bm{\alpha_2=0}$.} Then $w_{(3)}^{(2)}=y_1(y_1z_1)=0$ in $G_d({\mathfrak U}_{\alpha})$.

\begin{theorem}\label{lattice case 2}
When $\lambda_2=0$ the consequences in $G_d({\mathfrak U}_{\alpha})$ of degree $n+1$ of the polynomial $w_{\lambda}^{(n)}$, $\lambda\vdash n$, are equivalent to the following polynomials:

{\rm (i)} An identity of degree $1$ or $2$ implies, respectively, all polynomials of degree $2$ and $3$ in $G_d({\mathfrak U}_{\alpha})$.

{\rm (ii)} The identities $w_{(3)}^{(1)}$ and $\beta_1w_{(2,1)}^{(1)}+\beta_2w_{(2,1)}^{(0)}$, $\beta_2\not=0$, imply $w_{(4)}^{(1)}$ and $w_{(3,1)}^{(0)}$, i.e. all polynomials of degree $4$.

{\rm (iii)} For $n\geq 4$ the identities $w_{(n)}^{(1)}$ and $w_{(n-1,1)}^{(0)}$ imply $w_{(n+1)}^{(1)}$ and $w_{(n,1)}^{(0)}$, i.e. all polynomials of degree $n+1$.

{\rm (iv)} The polynomial $w_{(2,1)}^{(1)}$ does not have consequences of degree $4$.
\end{theorem}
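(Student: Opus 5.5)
The approach mirrors the proof of Theorem~\ref{lattice case 1}. In $G_d({\mathfrak U}_{\alpha})$ with $\alpha_2=0$, Lemma~\ref{hwv of G}, Corollary~\ref{hwv in subvariety}(ii) and Proposition~\ref{chi for alpha2=0} give the surviving highest weight vectors in each degree:
\[
n=1:\ w_{(1)};\qquad n=2:\ w_{(2)}^{(1)},\ w_{(1^2)}^{(0)};\qquad n=3:\ w_{(3)}^{(1)},\ \beta_1w_{(2,1)}^{(1)}+\beta_2w_{(2,1)}^{(0)};\qquad n\ge 4:\ w_{(n)}^{(1)},\ w_{(n-1,1)}^{(0)}.
\]
For each of these we write down its degree $n+1$ consequences using the recipes of Proposition~\ref{consequences of degree n+1} and Lemma~\ref{consequences of M(3) in G}: left and right multiplication by $y_1$ or $z_1$, combined with $\Delta_{12}$ to produce highest weight vectors, and substitution of products. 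We then compare the results with this list.

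\emph{Part (i).} Degree 2 is obvious from $w_{(1)}$. For degree 3 we reuse the formulas from the proof of Theorem~\ref{lattice case 1}. From $y_1z_1$, right multiplication gives $w_{(3)}^{(1)}=w_{(2)}^{(1)}z_1$, and the $\Delta_{12}$ combinations give $w_{(2,1)}^{(1)}$ and $w_{(2,1)}^{(0)}$. From $w_{(1^2)}^{(0)}$, one-sided multiplication gives $y_1w_{(1^2)}^{(0)}=w_{(2,1)}^{(1)}$ and $w_{(1^2)}^{(0)}z_1=w_{(2,1)}^{(0)}$. The substitution $w_{(1^2)}^{(0)}(y_1,y_1z_1,z_1,y_1z_1)=w_{(3)}^{(2)}-w_{(3)}^{(1)}=-w_{(3)}^{(1)}$ holds because $w_{(3)}^{(2)}=0$.

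\emph{Parts (ii) and (iii).} The key observation is that right multiplication by $z_1$ preserves the shape $y_1(\cdots)$ with $\deg_{Y}=1$. Concretely,
\[
w_{(n)}^{(1)}z_1=w_{(n+1)}^{(1)},\qquad w_{(n-1,1)}^{(0)}z_1=w_{(n,1)}^{(0)}.
\]
The formula $\Delta_{12}(w_{(n)}^{(1)})z_1-c\,w_{(n)}^{(1)}z_2$ with suitable $c$ yields a multiple of $w_{(n,1)}^{(0)}$, as in Lemma~\ref{consequences of M(3) in G}(i). I expect the coefficient to be nonzero, since it equals $-(n-1)$ up to sign, e.g.
\[
\Delta_{12}(y_1z_1^{n-1})z_1-n\,y_1z_1^{n-1}z_2=-(n-1)\,y_1z_1^{n-1}(y_1z_2-y_2z_1)/y_1,
\]
to be rewritten as $-(n-1)w_{(n,1)}^{(0)}$ after checking. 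Thus $w_{(n)}^{(1)}$ alone implies both degree $n+1$ vectors, and $w_{(n-1,1)}^{(0)}$ implies $w_{(n,1)}^{(0)}$. It remains to get $w_{(n+1)}^{(1)}$ from $w_{(n-1,1)}^{(0)}$. For this I would substitute products, e.g. $x_2\mapsto x_1x_1$, i.e. $y_2,z_2\mapsto y_1z_1$. This gives
\[
(y_1\cdot y_1z_1-y_1z_1\cdot z_1)z_1^{n-2}=w_{(n+1)}^{(2)}-w_{(n+1)}^{(1)}=-w_{(n+1)}^{(1)}.
\]
For (ii), the element $\beta_1w_{(2,1)}^{(1)}+\beta_2w_{(2,1)}^{(0)}$ multiplied by $z_1$ gives $\beta_1w_{(3,1)}^{(1)}+\beta_2w_{(3,1)}^{(0)}=\beta_2w_{(3,1)}^{(0)}$, which is nonzero since $\beta_2\ne0$. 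The same substitution $x_2\mapsto x_1x_1$ gives $-\beta_2w_{(4)}^{(1)}$, the $\beta_1$ term vanishing because it has $Y$-degree at least 2.

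\emph{Part (iv).} The point is a degree count in $Y_d$. Every degree 4 consequence of $w_{(2,1)}^{(1)}=y_1(y_1z_2-y_2z_1)$ arises from multiplications or substitutions. Left multiplication raises $\deg_{Y}$, right multiplication keeps it, and substitution of a product for a variable replaces $y_i$ by a monomial containing a $y$. Hence every consequence has $\deg_Y\ge2$ in every monomial. The only nonzero degree 4 elements, $w_{(4)}^{(1)}$ and $w_{(3,1)}^{(0)}$, have $\deg_Y=1$, so all these consequences vanish in $G_d({\mathfrak U}_{\alpha})$. This is the same argument as in the proof of Proposition~\ref{chi for alpha2=0}.

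The main obstacle is the bookkeeping in (iii): verifying that the $\Delta_{12}$ combination and the product substitution give nonzero multiples of the target highest weight vectors in $G_d({\mathfrak U}_{\alpha})$. Those targets are nonzero by Proposition~\ref{chi for alpha2=0}, so only the coefficients have to be checked to be nonzero.
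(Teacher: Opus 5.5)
Your proposal is correct and follows essentially the same route as the paper. Both use right multiplication by $z_1$, the combination $\Delta_{12}(w_{(n)}^{(1)})z_1-n\,w_{(n)}^{(1)}z_2$, the substitution $x_2\mapsto x_1x_1$ (where $w_{(n+1)}^{(2)}=0$), and the fact that $\deg_{Y_d}$ never decreases under consequences, which handles (iv) and discards the $\beta_1$-term in (ii). One harmless slip: that combination equals $y_2z_1^n-y_1z_1^{n-1}z_2=-w_{(n,1)}^{(0)}$ exactly, so the coefficient is $-1$ rather than $-(n-1)$; you only need it to be nonzero.
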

\vskip-2truecm
\[
\begin{picture}(80,130)
\put(0,0){\circle*{3}} \put(0,20){\circle*{3}}
\put(0,40){\circle*{3}} \put(0,60){\circle*{3}}
\put(0,65){\circle*{1}} \put(0,70){\circle*{1}}
\put(0,75){\circle*{1}}

\put(30,20){\circle*{3}} \put(30,40){\circle*{3}}
\put(30,60){\circle*{3}}
\put(30,65){\circle*{1}} \put(30,70){\circle*{1}}
\put(30,75){\circle*{1}}

\put(60,40){\circle*{3}}

\put(0,0){\vector(0,2){20}}\put(-12,-3){\tiny{$f_1$}}
\put(0,20){\vector(0,2){20}}\put(-12,17){\tiny{$f_2$}}
\put(0,40){\vector(0,2){20}}\put(-12,35){\tiny{$f_3$}}
\put(-12,57){\tiny{$f_4$}}

\put(0,0){\vector(3,2){30}}\put(35,19){\tiny{$g_2$}}
\put(30,20){\vector(0,2){20}}\put(35,35){\tiny{$g_3$}}
\put(30,40){\vector(0,2){20}}\put(35,59){\tiny{$g_4$}}

\put(30,40){\line(3,0){30}}

\put(30,20){\vector(3,2){30}}\put(65,35){\tiny{$h_3$}}
\put(0,20){\vector(3,2){30}} \put(0,40){\vector(3,2){30}}
\put(30,20){\vector(-3,2){30}} \put(30,40){\vector(-3,2){30}}

\put(50,40){\vector(-1,1){20}}
\put(50,40){\line(-5,2){50}}

\put(0,20){\line(3,1){30}} \put(30,30){\vector(3,1){30}}

\put(-90,-20){\tiny{$f_1=\omega_{(1)}, f_n=\omega_{(n)}^{(1)}, g_n=\omega_{(n-1,1)}^{(0)}, n\geq 2, h_3=\omega_{(2,1)}^{(1)}$}}

\put(-10,-35){\tiny{$\lambda=(3)$, Case 2}}
\end{picture}
\]
\vskip1.5truecm
\begin{proof}
The proofs of the following cases repeat verbatim the proofs in Theorem \ref{lattice case 1}:

The consequences of $w_{(1)}$ and $w_{(2)}^{(1)}$;

The fact that $w_{(1^2)}^{(0)}$ implies $w_{(2,1)}^{(1)}$ and $w_{(2,1)}^{(1)}$.

By multiplication from the right by $z_1$ we obtain that $w_{(n)}^{(1)}$ and $w_{(n-1,1)}^{(0)}$, $n\geq 4$,
imply, respectively, $w_{(n+1)}^{(1)}$ and $w_{(n,1)}^{(0)}$.

Since $y_1^2z_1=0$ we have $\Delta_1(y_1z_1,w_{(1^2)}^{(0)})=y_1^2z_1-y_1z_1^2=-y_1z_1^2=-w_{(3)}^{(1)}$
which completes the case $w_{(1^2)}^{(0)}$.

Let $w_{(n)}^{(1)}=y_1z_1^{n-1}=0$, $n\geq 3$. Then
\[
0=\Delta_{12}(w_{(n)}^{(1)})z_1=(y_2z_1^{n-1}+(n-1)y_1z_1^{n-2}z_2)z_1=y_2z_1^n-(n-1)y_1z_1^{n-1}z_2,
\]
$y_2z_1^n=0$, and hence $w_{(n,1)}^{(0)}=(y_1z_2-y_2z_1)z_1^{n-1}=y_1z_1^{n-1}z_2-y_2z_1^n=0$.

Similarly $w_{(n-1,1)}^{(0)}=0$, $n\geq 3$, implies
\[
0=\Delta_2(y_1z_1,w_{(n-1,1)}^{(0)})=y_1^2z_1^{n-1}-y_1z_1^n=-w_{(n+1)}^{(1)}.
\]
Since $w_{(2,1)}^{(1)})$ does not contain summands which are linear in $Y_d$, all consequences of higher degree have the same property.
Hence $\beta_1w_{(2,1)}^{(1)}+\beta_2w_{(2,1)}^{(0)}$ and $\beta_2w_{(2,1)}^{(0)}$ have the same consequences of degree 4 and this completes the proof.
\end{proof}

{\bf 3. Case $\bm{\alpha_1=0}$.} Then $w_{(3)}^{(1)}=(y_1z_1)z_1=0$ in $G_d({\mathfrak U}_{\alpha})$.
This case is similar to the case $\alpha_2=0$.

{\bf 4. Case $\bm{\alpha_1+\alpha_2=0}$.} Then $w_{(3)}^{(2)}-w_{(3)}^{(1)}=y_1^2z_1-y_1z_1^2=0$ in $G_d({\mathfrak U}_{\alpha})$.

\begin{theorem}\label{lattice case 4}
When $\alpha_1+\alpha_2=0$ the consequences in $G_d({\mathfrak U}_{\alpha})$ of degree $n+1$ of the polynomial $w_{\lambda}^{(n)}$, $\lambda\vdash n$, are equivalent to the following polynomials:

{\rm (i)} The identity $w_{(n)}^{(1)}$, $n \geq 1$ implies all polynomials of degree $n+1$.

{\rm (ii)} The identities $w_{(1^2)}^{(0)}$ and $\beta_1w_{(2,1)}^{(1)}+\beta_2w_{(2,1)}^{(0)}$, $(\beta_1,\beta_2)\not=(0,0)$, imply, respectively,
$w_{(2,1)}^{(1)},w_{(2,1)}^{(0)}$ and $w_{(2^2)}^{(0)}$.

{\rm (iii)} $w_{(2^2)}^{(0)}$ does not have consequences of higher degree.
\end{theorem}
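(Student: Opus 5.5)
Throughout we work in $G_d({\mathfrak U}_{\alpha})$ with $\alpha_1+\alpha_2=0$, i.e. modulo $y_1^2z_1=y_1z_1^2$. By Propositions \ref{chi4 for (3)} and \ref{chi for alpha1+alpha2=0}, the nonzero highest weight vectors are: $w_{(1)}$; $w_{(2)}^{(1)}$ and $w_{(1^2)}^{(0)}$ in degree $2$; $w_{(3)}^{(1)}=w_{(3)}^{(2)}$ and $\beta_1w_{(2,1)}^{(1)}+\beta_2w_{(2,1)}^{(0)}$ in degree $3$; $w_{(4)}^{(1)}$ and $w_{(2^2)}^{(0)}$ in degree $4$; and only $w_{(n)}^{(1)}$ for $n\geq 5$. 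Since all $y_1^pz_1^{n-p}$ coincide, $W_d(n)$ occurs once in each degree. Hence for each claimed edge it suffices to exhibit one consequence that is a nonzero multiple of the target highest weight vector. For a claimed non-edge, we use Proposition \ref{consequences of degree n+1} together with the list of degree $n+1$ modules present.

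For (i), the case $n=1$ is trivial, since $w_{(1)}$ generates everything. For $n\geq 2$, right multiplication by $z_1$ gives $w_{(n)}^{(1)}z_1=w_{(n+1)}^{(1)}$. When $n=2$ we also need $w_{(2,1)}^{(1)}$ and $w_{(2,1)}^{(0)}$; these come from the same computations as in the proof of Theorem \ref{lattice case 1}, namely $y_1\Delta_{12}(y_1z_1)-2y_2(y_1z_1)$ and its right-hand analogue. When $n=3$ we also need $w_{(2^2)}^{(0)}$. I would obtain it by substitution, following Lemma \ref{consequences of M(3) in G}(ii): apply the linearization $\Delta_{12}$ to $y_1z_1^2$ and substitute $x_1x_2+x_2x_1$, i.e. take $\Delta_1(y_1z_2+y_2z_1,\ast)$ applied to a linear combination of $y_1z_1^2$ and its $y_1^2z_1$ form, and extract the $(2^2)$-component. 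In the degree-$4$ computation $(\alpha_1,\alpha_2)=(1,0)$ gave $v_{(2^2)}^{(1)}=-w_{(2^2)}^{(0)}\neq 0$ in the free algebra. Here $w_{(3)}^{(1)}$ itself is an identity, so that substitution still yields $w_{(2^2)}^{(0)}$, which is nonzero in $G_d({\mathfrak U}_\alpha)$. This gives all of degree $4$.

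For (ii), first $y_1w_{(1^2)}^{(0)}=w_{(2,1)}^{(1)}$ and $w_{(1^2)}^{(0)}z_1=w_{(2,1)}^{(0)}$. Next, $w_{(1^2)}^{(0)}$ does not imply $w_{(3)}$. I would argue this with the algebra $A$, $a^2=a$: it satisfies $w_{(1^2)}^{(0)}$ because it is commutative, yet $y_1z_1^2\mapsto a\neq 0$. For $u=\beta_1w_{(2,1)}^{(1)}+\beta_2w_{(2,1)}^{(0)}$, the computation of Lemma \ref{consequences of M(3) in G}(iii) style (the substitution $h_{(1^2)}(x_1,x_2,[x_1,x_2])$, i.e. Example \ref{consequences of M(2,1) in associative case}(iii)) should produce $(\beta_1\pm\beta_2)$-type multiples of $(y_1z_2-y_2z_1)^2$. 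I would compute both the substitution $x_2\mapsto x_1x_2-x_2x_1$, which gives $(\beta_1y_1+\beta_2z_1)$ replaced by the corresponding square of $y_1z_2-y_2z_1$ terms, and $u_{(2^2)}^{(3)}$. I expect their coefficients to be, for example, $\beta_1+\beta_2$ and $\beta_1-\beta_2$ up to scalars, which cannot vanish simultaneously, so $w_{(2^2)}^{(0)}$ follows. Again $A$ shows that $W(4)$ is not implied, because $u$ vanishes on commutative algebras.

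For (iii), $w_{(2^2)}^{(0)}$ has no higher consequences in degree $5$, since only $W(5)$ survives there. I would show $w_{(5)}$ is not implied, again via the algebra $A$, which is commutative so that $w_{(2^2)}^{(0)}$ vanishes on it. In degrees $\geq 6$ everything surviving is $w_{(n)}^{(1)}$, which is also nonzero on $A$.

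The main obstacle is the explicit $(2^2)$-extraction in (ii): I must verify that the coefficients obtained from the two independent substitutions do not both vanish for any $(\beta_1,\beta_2)\neq(0,0)$. This is done via Criterion \ref{criterion hwv} and a $2\times 1$ rank check.
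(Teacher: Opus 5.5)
Your proposal is correct. It matches the paper on the routine edges and on every non-implication (via the algebra $A$ with $a^2=a$), but it reaches the two substantive edges by a different route.

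\textbf{First edge, $w_{(3)}^{(1)}\Rightarrow w_{(2^2)}^{(0)}$.} The paper argues that $w_{(3)}^{(1)}=0$ together with $\alpha_1+\alpha_2=0$ forces $w_{(3)}^{(2)}=0$. A generic combination of degree three then vanishes, and Proposition~\ref{chi4 for (3)} kills everything in degree four. You instead read off $v^{(1)}_{(2^2)}=-(\alpha_1+\alpha_2)w_{(2^2)}^{(0)}$ from Lemma~\ref{consequences of M(3) in G} at $(\alpha_1,\alpha_2)=(1,0)$. This is more direct and checks out.

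\textbf{Second edge, $u\Rightarrow w_{(2^2)}^{(0)}$.} Here $u(x_1,x_2)=\beta_1x_1[x_1,x_2]+\beta_2[x_1,x_2]x_1$. You left the decisive computation as an expectation; it does come out exactly as you guessed, with one caveat.

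Taken literally, $h_{(2,1)}(x_1,x_2,x_1x_2+x_2x_1)$ from Example~\ref{consequences of M(2,1) in associative case} is not a highest weight vector. In $G_d$ it equals $(y_1z_2+y_2z_1)$ times a nonzero quadratic, so it is not a multiple of $(y_1z_2-y_2z_1)^2$. The vector annihilated by $\Delta_{21}$ is $2u(x_1,x_2^2)-h_{(2,1)}(x_1,x_2,x_1x_2+x_2x_1)+2u(x_2,x_1^2)$, and it equals $(\beta_1-\beta_2)(y_1z_2-y_2z_1)^2$.

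For the other substitution, $h_{(1^2)}(x_1,x_2,[x_1,x_2])=(\beta_1+\beta_2)(y_1z_2-y_2z_1)^2$. The commutator must go into the third slot of $h_{(1^2)}$. Substituting it for $x_2$ in $u$ itself would land in $W_d(3,1)$. Since $\beta_1\pm\beta_2$ cannot both vanish, your route works.

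The paper's route is shorter because it uses one-sided multiplication rather than substitution. It takes $\gamma_1x_2u(x_1,x_2)+\gamma_2u(x_1,x_2)x_2$ plus the same expression with $x_1\leftrightarrow x_2$. In $G_d$ this is $(\beta_1\gamma_2-\beta_2\gamma_1)(y_1z_2-y_2z_1)^2$, so there is no highest-weight extraction and no case split.

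This edge is in fact immediate from Lemma~\ref{consequences of w(lambda)}, since $\lambda_1=2\le\mu_2=2$.
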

\vskip-1.5truecm
\[
\begin{picture}(80,140)
\put(0,0){\circle*{3}} \put(0,20){\circle*{3}}
\put(0,40){\circle*{3}} \put(0,60){\circle*{3}}
\put(0,80){\circle*{3}}
\put(0,85){\circle*{1}} \put(0,90){\circle*{1}}
\put(0,95){\circle*{1}}

\put(30,20){\circle*{3}} \put(30,40){\circle*{3}}
\put(30,60){\circle*{3}}

\put(60,40){\circle*{3}}

\put(0,0){\vector(0,2){20}}\put(-12,-3){\tiny{$f_1$}}
\put(0,20){\vector(0,2){20}}\put(-12,17){\tiny{$f_2$}}
\put(0,40){\vector(0,2){20}}\put(-12,35){\tiny{$f_3$}}
\put(0,60){\vector(0,2){20}}\put(-12,57){\tiny{$f_4$}}
\put(-12,77){\tiny{$f_5$}}

\put(0,0){\vector(3,2){30}}\put(35,19){\tiny{$g_2$}}
\put(30,20){\vector(0,2){20}}\put(34,35){\tiny{$g_3$}}
\put(30,40){\vector(0,2){20}}\put(35,59){\tiny{$p_4$}}

\put(30,40){\line(3,0){30}}

\put(40,40){\vector(-1,2){10}}

\put(30,20){\vector(3,2){30}}\put(65,35){\tiny{$h_3$}}
\put(0,20){\vector(3,2){30}} \put(0,40){\vector(3,2){30}}
\put(60,40){\vector(-3,2){30}}

\put(0,20){\line(3,1){30}} \put(30,30){\vector(3,1){30}}

\put(-90,-20){\tiny{$f_1=\omega_{(1)}, f_n=\omega_{(n)}^{(1)}, g_2=\omega_{(1^2)}^{(0)}, g_3=\omega_{(2,1)}^{(1)},  h_3=\omega_{(2,1)}^{(0)}, p_4=\omega_{(2^2)}^{(0)}$}}

\put(-10,-35){\tiny{$\lambda= (3)$, Case 4}}
\end{picture}
\]
\vskip1.5truecm
\begin{proof}
The proof that $w_{(1)}$ and $w_{(2)}^{(1)}$ imply all polynomials of degree 2 and 3, respectively, is the same as in the previous cases.
It is also clear that $w_{(1^2)}^{(0)}$ implies $w_{(2,1)}^{(1)}$ and $w_{(2,1)}^{(0)}$.
The one-dimensional algebra $A$ with basis $\{a\}$ and multiplication $a^2=a$ satisfies the identity (\ref{PI (3)})
for $\alpha_1+\alpha_2=0$ and all identities $w_{(\lambda_1,\lambda_2)}^{(i)}=0$
with $\lambda_2>0$ and $w_{(n)}^{(1)}$, $n>1$ is different from 0 in $A$. Hence $w_{(1^2)}^{(0)}$, $w_{(2,1)}^{(1)}$, $w_{(2,1)}^{(0)}$ and $w_{(2^2)}^{(0)}$
do not imply any $w_{(n)}^{(1)}$. Clearly, $w_{(n)}^{(1)}$ implies $w_{(n+1)}^{(1)}$ and the only proof we need is that
$w_{(3)}^{(1)}$ and $\beta_1w_{(2,1)}^{(1)}+\beta_2w_{(2,1)}^{(0)}$ imply $w_{(2^2)}^{(0)}$.

If $w_{(3)}^{(1)}=0$ the condition $\alpha_1+\alpha_2=0$ in (\ref{PI (3)}) implies that $w_{(3)}^{(2)}=0$ and
as in Theorem \ref{lattice case 1} we obtain that all polynomials of degree 4 follow from $w_{(3)}^{(1)}=0$.

Direct computation shows that
\[
(\beta_1w_{(2,1)}^{(1)}(y_1,y_2,z_1,z_2)+\beta_2w_{(2,1)}^{(0)}(y_1,y_2,z_1,z_2))(\gamma_1y_2+\gamma_2z_2)
\]
\[
+(\beta_1w_{(2,1)}^{(1)}(y_2,y_1,z_2,z_1)+\beta_2w_{(2,1)}^{(0)}(y_2,y_1,z_2,z_1))(\gamma_1y_1+\gamma_2z_1)
\]
\[
=(\beta_1\gamma_2-\beta_2\gamma_1)(y_1z_2-y_2z_1)^2.
\]
For any $(\beta_1,\beta_2)\not=(0,0)$ we can choose $(\gamma_1,\gamma_2)$ such that $\beta_1\gamma_2-\beta_2\gamma_1\not=0$
and this implies that $w_{(2^2)}^{(0)}$ follows from $\beta_1w_{(2,1)}^{(1)}+\beta_2w_{(2,1)}^{(0)}$, $(\beta_1,\beta_2)\not=(0,0)$.
\end{proof}

{\bf 5. Case $\bm{\alpha_1-\alpha_2=0}$.} Hence $w_{(3)}^{(2)}+w_{(3)}^{(1)}=y_1^2z_1+y_1z_1^2=0$ in $G_d({\mathfrak U}_{\alpha})$.

\begin{theorem}\label{lattice case 5}
When $\alpha_1=\alpha_2$ the consequences in $G_d({\mathfrak U}_{\alpha})$ of degree $n+1$ of the polynomial $w_{\lambda}^{(n)}$, $\lambda\vdash n$, are equivalent to the following polynomials:

{\rm (i)} Any identity of degree $n=1,2,3$, implies all polynomials of degree $n+1$.

{\rm (ii)} $w_{(3,1)}^{(2)}$ does not have consequences of higher degree.
\end{theorem}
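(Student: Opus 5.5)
The plan is to go degree by degree, using the module structure of $G_d({\mathfrak U}_{\alpha})$ already established for $\alpha_1=\alpha_2$. We may assume $\alpha_1=\alpha_2\neq 0$, so the defining identity is $w_{(3)}^{(1)}=-w_{(3)}^{(2)}$.

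\textbf{Highest weight vectors available.} By Lemma \ref{hwv of G} and Proposition \ref{chi4 for (3)} they are:
\begin{itemize}
\item degree $1$: $w_{(1)}$;
\item degree $2$: $w_{(2)}^{(1)}$ and $w_{(1^2)}^{(0)}$;
\item degree $3$: $w_{(3)}^{(2)}$ (since $w_{(3)}^{(1)}=-w_{(3)}^{(2)}$), together with $\beta_1w_{(2,1)}^{(1)}+\beta_2w_{(2,1)}^{(0)}$;
\item degree $4$: only $w_{(3,1)}^{(2)}$, by Corollary \ref{hwv in subvariety}(v).
\end{itemize}
Part (ii) then follows immediately from Proposition \ref{chi for alpha1-alpha2=0}: $\chi_n({\mathfrak U}_{\alpha})=0$ for $n\geq 5$, so there is nothing of degree $5$ for $w_{(3,1)}^{(2)}$ to imply. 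It is also nonzero, by Proposition \ref{chi4 for (3)}.

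\textbf{Degrees $1$ and $2$.} We repeat the argument of Theorem \ref{lattice case 1}. Its formulas show that $w_{(2)}^{(1)}$ implies $w_{(3)}^{(2)}$, $w_{(2,1)}^{(1)}$ and $w_{(2,1)}^{(0)}$. For $w_{(1^2)}^{(0)}$, the substitution
\[
w_{(1^2)}^{(0)}(y_1,y_1z_1,z_1,y_1z_1)=w_{(3)}^{(2)}-w_{(3)}^{(1)}=2w_{(3)}^{(2)}
\]
gives $w_{(3)}^{(2)}$. Here the point is that $\alpha_1+\alpha_2=2\alpha_1\neq 0$. Left and right multiplication of $w_{(1^2)}^{(0)}$ give $w_{(2,1)}^{(1)}$ and $w_{(2,1)}^{(0)}$.

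\textbf{Degree $3$.} This is the step needing a small computation. The key input is the system (\ref{identities from (3) to (3,1)}) with $\alpha_1=\alpha_2$. It yields in $G_d({\mathfrak U}_{\alpha})$:
\[
w_{(3,1)}^{(1)}=-\tfrac12 w_{(3,1)}^{(2)},\qquad w_{(3,1)}^{(0)}=w_{(3,1)}^{(2)}.
\]
\begin{itemize}
\item For $w_{(3)}^{(2)}=y_1^2z_1$, apply the construction of $v^{(1)}_{(3,1)}$ in Lemma \ref{consequences of M(3) in G} (formally with $\alpha_2=0$):
\[
\Delta_{12}(y_1^2z_1)z_1-3y_1^2z_1z_2=-2w_{(3,1)}^{(1)}=w_{(3,1)}^{(2)}.
\]
So $w_{(3)}^{(2)}$ implies $w_{(3,1)}^{(2)}$.
\item For $u=\beta_1w_{(2,1)}^{(1)}+\beta_2w_{(2,1)}^{(0)}$, multiply by $y_1$ on the left and by $z_1$ on the right:
\[
y_1u=\beta_1w_{(3,1)}^{(2)}+\beta_2w_{(3,1)}^{(1)}=(\beta_1-\tfrac12\beta_2)\,w_{(3,1)}^{(2)},
\]
\[
uz_1=\beta_1w_{(3,1)}^{(1)}+\beta_2w_{(3,1)}^{(0)}=(\beta_2-\tfrac12\beta_1)\,w_{(3,1)}^{(2)}.
\]
Both coefficients vanish only when $\beta_1=\beta_2/2$ and $\beta_2=\beta_1/2$, that is, $\beta_1=\beta_2=0$. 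Hence every nonzero degree-$3$ highest weight vector implies $w_{(3,1)}^{(2)}$, which spans all of degree $4$.
\end{itemize}
This proves (i).

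The main obstacle is keeping the normalizations of $w_{(3,1)}^{(j)}$ consistent with Lemma \ref{consequences of M(3) in G}. If any sign there is off, the argument still goes through. Each degree-$3$ vector produces a one-dimensional family of degree-$4$ multiples of $w_{(3,1)}^{(2)}$, and we only need the two coefficient linear forms in $(\beta_1,\beta_2)$ to be linearly independent. That is a $2\times 2$ determinant check.
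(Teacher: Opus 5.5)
Your proof is correct. For degrees $1$ and $2$ and for part (ii) it matches the paper's. The paper uses exactly your substitution $x_2\mapsto x_1x_1$ in $w_{(1^2)}^{(0)}$ to get $w_{(3)}^{(2)}-w_{(3)}^{(1)}$, combines it with $w_{(3)}^{(2)}+w_{(3)}^{(1)}=0$, and part (ii) is immediate from Proposition \ref{chi for alpha1-alpha2=0}.

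Degree $3$ is where you differ. The paper only says the case $w_{(3)}^{(2)}$ is ``the same as in the previous cases''. The argument meant, as in Theorem \ref{lattice case 4}, is this: $w_{(3)}^{(2)}=0$ together with the defining identity forces $w_{(3)}^{(1)}=0$. One is then inside ${\mathfrak U}_{\alpha'}$ for a generic $\alpha'$, where Proposition \ref{chi4 for (3)} gives $\chi_4=0$. The paper's proof says nothing about the vectors $\beta_1w_{(2,1)}^{(1)}+\beta_2w_{(2,1)}^{(0)}$.

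You instead solve the system (\ref{identities from (3) to (3,1)}) at $\alpha_1=\alpha_2$, obtaining $w_{(3,1)}^{(1)}=-\frac12 w_{(3,1)}^{(2)}$ and $w_{(3,1)}^{(0)}=w_{(3,1)}^{(2)}$. You then read off the degree-$4$ consequences by linearization and one-sided multiplication. This treats $w_{(3)}^{(2)}$ and every $(2,1)$-vector uniformly and fills a step the paper omits. The paper's reduction is slicker for $w_{(3)}^{(2)}$ but yields nothing for the $(2,1)$-vectors. Your route costs one small computation and covers all of (i).

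The relations you use are right. A direct check of the six-dimensional multidegree-$(3,1)$ component of $G_d$ modulo the degree-$4$ consequences gives a one-dimensional quotient, with $w_{(3,1)}^{(2)}:w_{(3,1)}^{(1)}:w_{(3,1)}^{(0)}=2:-1:2$. So your closing hedge is unnecessary. It is also overstated: the argument would not survive an arbitrary sign error. With, say, $w_{(3,1)}^{(1)}=-w_{(3,1)}^{(2)}$, the two linear forms would be $\beta_1-\beta_2$ and $\beta_2-\beta_1$, which are dependent. So the $2\times 2$ determinant check you mention is a real requirement, not a formality.
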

\vskip-2.5truecm
\[
\begin{picture}(80,140)
\put(0,0){\circle*{3}} \put(0,20){\circle*{3}}
\put(0,40){\circle*{3}} 

\put(30,20){\circle*{3}} \put(30,40){\circle*{3}}
\put(30,60){\circle*{3}}

\put(60,40){\circle*{3}}

\put(0,0){\vector(0,2){20}}\put(-12,-3){\tiny{$f_1$}}
\put(0,20){\vector(0,2){20}}\put(-12,17){\tiny{$f_2$}}
\put(-12,35){\tiny{$f_3$}}

\put(0,0){\vector(3,2){30}}\put(35,19){\tiny{$g_2$}}
\put(30,20){\vector(0,2){20}}\put(34,35){\tiny{$g_3$}}
\put(30,40){\vector(0,2){20}}\put(35,59){\tiny{$g_4$}}

\put(30,40){\vector(3,0){30}}

\put(30,20){\vector(3,2){30}}\put(65,35){\tiny{$h_3$}}
\put(0,20){\vector(3,2){30}} \put(0,40){\vector(3,2){30}}
\put(30,20){\vector(-3,2){30}} 
\put(60,40){\vector(-3,2){30}}

\put(40,40){\vector(-1,2){10}}

\put(0,20){\line(3,1){30}} \put(30,30){\vector(3,1){30}}

\put(-90,-20){\tiny{$f_1=\omega_{(1)}, f_2=\omega_{(2)}^{(1)}, g_2=\omega_{(1^2)}^{(0)}, f_3=\omega_{(3)}^{(2)},  g_3=\omega_{(2,1)}^{(1)},  h_3=\omega_{(2,1)}^{(0)}, g_4=\omega_{(3,1)}^{(2)}$}}

\put(-10,-35){\tiny{$\lambda= (3)$, Case 5}}
\end{picture}
\]
\vskip1.5truecm
\begin{proof}
The proof for the identities $w_{(1)}, w_{(2)}^{(1)}, w_{(3)}^{(2)}$ is the same as in the previous cases.
If $w_{(1^2)}^{(0)}=0$, then
\[
\Delta_2(y_1z_1,w_{(1^2)}^{(0)})=y_1^2z_1-y_1z_1^2=w_{(3)}^{(2)}-w_{(3)}^{(1)}=0
\]
and this together with the identity $\alpha_1(w_{(3)}^{(2)}+w_{(3)}^{(1)})=0$
implies $w_{(3)}^{(2)}=0$.
\end{proof}

\section{The variety defined by the identity $\beta_1x(xy-yx)+\beta_2(xy-yx)x=0$}

In this section we shall describe all subvarieties of the variety of bicommutative algebras ${\mathfrak V}_{\beta}$ determined by the polynomial identity (\ref{PI (2,1)}).
As in the previous section we shall work in the algebra $G_d({\mathfrak V}_{\beta})$ which is isomorphic to the relatively free algebra $F_d({\mathfrak V}_{\beta})$.
By (\ref{PI (2,1) in G}) the identity (\ref{PI (2,1)}) becomes
\[
w_{(2,1)}(y_1,y_2,z_1,z_2)=(\beta_1y_1+\beta_2z_1)(y_1z_2-y_2z_1)=\beta_1w_{(2,1)}^{(1)}+\beta_2w_{(2,1)}^{(0)}=0.
\]
We shall apply the results in Example \ref{consequences of M(2,1) in associative case} to find its consequences.
The linearization of $w_{(2,1)}$ is
\[
p(y_1,y_2,y_3,z_1,z_2,z_3)=(\beta_1y_1+\beta_2z_1)(y_2z_3-y_3z_2)+(\beta_1y_2+\beta_2z_2)(y_1z_3-y_3z_1)=0.
\]
The generators of the $S_2$-submodules $M(2)$ and $M(1^2)$ of $M(2,1)\downarrow S_2$ are, respectively,
$p_{(2)}(y_1,y_2,y_3,z_1,z_2,z_3)=p(y_1,y_2,y_3,z_1,z_2,z_3)$ and
\[
p_{(1^2)}(y_1,y_2,y_3,z_1,z_2,z_3)=\frac{1}{3}(p(y_1,y_2,y_3,z_1,z_2,z_3)+2p(y_1,y_3,y_2,z_1,z_3,z_2))
\]
\[
=(\beta_1y_3+\beta_2z_3)(y_1z_2-y_2z_1).
\]
Again, in the proof of the following lemma we use Criterion \ref{criterion hwv}.
By Corollary \ref{bounds for cocharacters} it is sufficient to consider the consequences in $M(\mu)$ for $\mu=(n),(n-1,1)$ only.

\begin{lemma}\label{consequences of M(2,1) in G}
All consequences of the identity {\rm (\ref{PI (2,1)})} which generate $GL_d(K)$-modules $W_d(4)$ and $W_d(3,1)$ in $G_d({\mathfrak V}_{\beta})$
are equivalent to the identities obtained in the following way:

{\rm (i)} By one side multiplication (first from the right):
\[
t^{(1)}_{(3,1)}(y_1,y_2,z_1,z_2)=w_{(2,1)}(y_1,y_2,z_1,z_2)z_1
\]
\[
=(\beta_1y_1+\beta_2z_1)(y_1z_2-y_2z_1)z_1=\beta_1w_{(3,1)}^{(1)}+\beta_2w_{(3,1)}^{(0)}=0,
\]
and similarly by multiplication from the left
\[
t^{(2)}_{(3,1)}(y_1,y_2,z_1,z_2)=y_1w_{(2,1)}(y_1,y_2,z_1,z_2)=\beta_1w_{(3,1)}^{(2)}+\beta_2w_{(3,1)}^{(1)}=0.
\]

{\rm (ii)} By substitution with elements of $W_d(2)$:
\[
t^{(4)}_{(1)}=w_{(2,1)}(y_1,y_1z_1,z_1,y_1z_1)=\beta_1w_{(4)}^{(3)}-(\beta_1-\beta_2)w_{(4)}^{(2)}-\beta_2w_{(4)}^{(1)}=0,
\]
\[
t^{(3)}_{(3,1)}=\frac{1}{2}p_{(2)}(y_1,y_1,y_1z_2+y_2z_1,z_1,z_1,y_1z_2+y_2z_1)+p_{(2)}(y_1,y_2,y_1z_1,z_1,z_2,y_1z_1)
\]
\[
=(\beta_1y_1^2+\beta_2z_1^2)(y_1z_2-y_2z_1)=\beta_1w^{(2)}_{(3,1)}+\beta_2w^{(0)}_{(3,1)}=0,
\]
\[
t^{(4)}_{(3,1)}=p_{(1^2)}(y_1,y_2,y_1z_1,z_1,z_2,y_1z_1)
\]
\[
=(\beta_1+\beta_2)y_1z_1(y_1z_2-y_2z_1)=(\beta_1+\beta_2)w^{(1)}_{(3,1)}=0.
\]

{\rm (iii)} By substitution with elements of $W_d(1^2)$:
\[
t^{(5)}_{(3,1)}=\frac{1}{2}p_{(2)}(y_1,y_2,y_1z_2-y_2z_1,z_1,z_2,y_1z_2-y_2z_1)
\]
\[
=(\beta_1y_1+\beta_2z_1)(y_1-z_1)(y_1z_2-y_2z_1)=\beta_1w^{(2)}_{(3,1)}-(\beta_1-\beta_2)w^{(1)}_{(3,1)}-\beta_2w^{(0)}_{(3,1)}=0.
\]
\end{lemma}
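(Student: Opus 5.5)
The plan mirrors the proof of Lemma \ref{consequences of M(3) in G}. By Proposition \ref{consequences of degree n+1} with $n=3$ and $M=M(2,1)$, every consequence of degree $4$ of (\ref{PI (2,1)}) lies in the sum of the images of three induced modules. The first is $(M(2,1)\otimes M(1))\uparrow S_4$, coming from multiplication by a new variable on either side. The second is $((M(2,1)\downarrow S_2)\otimes P_2)\uparrow S_4$, coming from substituting a product for a variable. By the Branching theorem $M(2,1)\downarrow S_2=M(2)\oplus M(1^2)$, generated by $p_{(2)}$ and $p_{(1^2)}$. Also $P_2$ in $G_d$ splits as $W_d(2)\oplus W_d(1^2)$, with highest weight vectors $y_1z_1$ and $y_1z_2-y_2z_1$. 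Example \ref{consequences in partitions} then lists how many copies of $M(4)$ and $M(3,1)$ occur in each induced piece. Because of Corollary \ref{bounds for cocharacters}, only these two partitions matter. So it suffices to write down one highest weight vector for each occurrence of $(4)$ and $(3,1)$ and reduce it in $G_d$ to a combination of the basis vectors $w^{(j)}_{(4)}$ and $w^{(j)}_{(3,1)}$ of Lemma \ref{hwv of G}.

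\emph{Step (i).} For multiplication, $(M(2,1)\otimes M(1))\uparrow S_4=M(3,1)\oplus M(2^2)\oplus M(2,1,1)$, so each side contributes exactly one $(3,1)$ and no $(4)$. The highest weight vectors are obtained by multiplying $w_{(2,1)}$ by $z_1$ on the right and by $y_1$ on the left. Criterion \ref{criterion hwv} holds at once, since $\Delta_{21}$ annihilates $w_{(2,1)}$, $y_1$ and $z_1$. Expanding gives $t^{(1)}_{(3,1)}$ and $t^{(2)}_{(3,1)}$.

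\emph{Step (ii).} For substitutions of $W_d(2)$, the piece $(M(2)\otimes M(2))\uparrow S_4$ contains one $(4)$ and one $(3,1)$. The piece $(M(1^2)\otimes M(2))\uparrow S_4$ contains one $(3,1)$ and no $(4)$.
\begin{itemize}
\item The $(4)$-vector comes from substituting $x_2\mapsto x_1x_1$, that is, $y_2,z_2\mapsto y_1z_1$, into $w_{(2,1)}$. Expanding gives $t^{(4)}_{(1)}$.
\item For the $(3,1)$-vector coming from $p_{(2)}$, take a general combination of $p_{(2)}(y_1,y_1,y_1z_2+y_2z_1,\dots)$ and $p_{(2)}(y_1,y_2,y_1z_1,\dots)$. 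The coefficients $\tfrac12$ and $1$ are fixed by imposing $\Delta_{21}=0$.
\item For the $(3,1)$-vector coming from $p_{(1^2)}$, substitute $y_1z_1$ into its third slot.
\end{itemize}
Reducing each vector in the commutative algebra $G_d^2$ and expressing it through $y_1^jz_1^{2-j}(y_1z_2-y_2z_1)$ gives $t^{(3)}_{(3,1)}$ and $t^{(4)}_{(3,1)}$.

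\emph{Step (iii).} For substitutions of $W_d(1^2)$, Example \ref{consequences in partitions}(iii) shows that $(M(2)\otimes M(1^2))\uparrow S_4$ contributes a $(3,1)$ and no $(4)$. The $M(1^2)$ part contributes neither $(4)$ nor $(3,1)$ with $d=2$. Substituting $y_1z_2-y_2z_1$ into the third slot of $p_{(2)}(y_1,y_2,\cdot)$ is already a highest weight vector, and expanding gives $t^{(5)}_{(3,1)}$.

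\emph{Main obstacle.} The difficult step is (ii) for $(3,1)$. We must confirm that the chosen combination really is annihilated by $\Delta_{21}$. We must also account for every occurrence in the multiplicity count, including the possibility that the two $(3,1)$ copies give distinct identities rather than dependent ones. I would first check $\Delta_{21}=0$ directly using the derivation rules on $Y_d,Z_d$, and then carry out the polynomial expansion carefully.
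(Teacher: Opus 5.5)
Your strategy is the one the paper uses. Proposition \ref{consequences of degree n+1}, together with the branching and Littlewood--Richardson rules, shows that at most one $(4)$ and five $(3,1)$ components arise in degree $4$. Each highest weight vector is then found in $G_d$ with Criterion \ref{criterion hwv}. Your bookkeeping, step (i), and the vectors $t^{(4)}_{(1)}$ and $t^{(4)}_{(3,1)}$ are fine.

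However, two of the vectors you commit to are wrong as written. They reproduce misprints in the displayed formulas, so those steps do not go through literally.

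\emph{Step (ii).} Put $U=y_1z_2+y_2z_1$, $V=y_1z_1$, $A=p_{(2)}(y_1,y_1,U,z_1,z_1,U)$ and $B=p_{(2)}(y_1,y_2,V,z_1,z_2,V)$. We have $\Delta_{21}(U)=2V$, $\Delta_{21}(V)=0$, and $p_{(2)}$ is multilinear in its three slots. Hence
\[
\Delta_{21}(\tfrac12A+cB)=(1+c)\,p_{(2)}(y_1,y_1,V,z_1,z_1,V)=2(1+c)(\beta_1y_1+\beta_2z_1)y_1z_1(y_1-z_1).
\]
So the condition $\Delta_{21}=0$ forces $c=-1$, not $c=1$. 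In fact $\tfrac12A+B=\Delta_{12}(t^{(4)}_{(1)})$. It lies in the copy of $W_d(4)$ generated by $t^{(4)}_{(1)}$ and carries no $(3,1)$ information. By contrast, $\tfrac12A-B$ expands to $(\beta_1y_1^2+\beta_2z_1^2)(y_1z_2-y_2z_1)$, which is the stated $t^{(3)}_{(3,1)}$.

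\emph{Step (iii).} With $D=y_1z_2-y_2z_1$, the vector you propose is
\[
p_{(2)}(y_1,y_2,D,z_1,z_2,D)=D\,((\beta_1y_1+\beta_2z_1)(y_2-z_2)+(\beta_1y_2+\beta_2z_2)(y_1-z_1)).
\]
It has multidegree $(2,2)$, so it cannot give a $(3,1)$ identity. It is not even a highest weight vector: $\Delta_{21}$ sends it to $2(\beta_1y_1+\beta_2z_1)(y_1-z_1)D\neq 0$. The $(3,1)$ component of $(M(2)\otimes M(1^2))\uparrow S_4$ has as highest weight vector the image of $x_1^2\otimes[x_1,x_2]$. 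That image is $\tfrac12p_{(2)}(y_1,y_1,D,z_1,z_1,D)=(\beta_1y_1+\beta_2z_1)(y_1-z_1)D$, which is exactly $t^{(5)}_{(3,1)}$.

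With these two corrections your argument is complete. Your worry about the $(3,1)$ copies being dependent is moot, since the lemma only asserts a spanning set of consequences.
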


\begin{proposition}\label{chi4 for (2,1)}
The cocharacter $\chi_4({\mathfrak V}_{\beta})$ of the variety of bicommutative algebras ${\mathfrak V}_{\beta}$ defined by the identity {\rm (\ref{PI (2,1)})} is
\[
\chi_4({\mathfrak V}_{\beta})=\begin{cases}
2\chi_{(4)},\text{ if }\beta_1\beta_2(\beta_1+\beta_2)(\beta_1-\beta_2)\not=0\text{ or }\beta_1-\beta_2=0;\\
2\chi_{(4)}+\chi_{(3,1)},\text{ if }\beta_1\beta_2(\beta_1+\beta_2)=0.
\end{cases}
\]
\end{proposition}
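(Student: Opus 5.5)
By Corollary \ref{bounds for cocharacters}, $\chi_4({\mathfrak V}_{\beta})$ contains only $\chi_{(4)}$ and $\chi_{(3,1)}$; the component $\chi_{(2^2)}$ is killed by Lemma \ref{consequences of w(lambda)}, since $\lambda_1=2$. The plan is to compute the two multiplicities separately, as in the proof of Proposition \ref{chi4 for (3)}. By (\ref{multiplicities of cocharacters of B}) we have $m(4)=m(3,1)=3$ in $P_4({\mathfrak B})$. The corresponding highest weight vectors are $w^{(3)}_{(4)},w^{(2)}_{(4)},w^{(1)}_{(4)}$ and $w^{(2)}_{(3,1)},w^{(1)}_{(3,1)},w^{(0)}_{(3,1)}$ (Lemma \ref{hwv of G}). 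I take Lemma \ref{consequences of M(2,1) in G} as the complete list of consequences of degree $4$ lying in $W_d(4)$ and $W_d(3,1)$. The multiplicity of each $\chi_\mu$ is then $3$ minus the rank of the linear system in these three highest weight vectors.

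For $\lambda=(4)$: by Example \ref{consequences in partitions}(i), one-sided multiplication of $M(2,1)$ produces no component $M(4)$. So the only relation is $t^{(4)}_{(1)}=\beta_1w_{(4)}^{(3)}-(\beta_1-\beta_2)w_{(4)}^{(2)}-\beta_2w_{(4)}^{(1)}=0$. Its coefficient vector is nonzero because $(\beta_1,\beta_2)\neq(0,0)$, so the rank is $1$ and $\chi_{(4)}$ has multiplicity exactly $2$ for every $\beta$.

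For $\lambda=(3,1)$, write the $5\times 3$ matrix of $t^{(1)},\ldots,t^{(5)}_{(3,1)}$ in the basis $w^{(2)}_{(3,1)},w^{(1)}_{(3,1)},w^{(0)}_{(3,1)}$. Its rows are $(0,\beta_1,\beta_2)$, $(\beta_1,\beta_2,0)$, $(\beta_1,0,\beta_2)$, $(0,\beta_1+\beta_2,0)$, $(\beta_1,\beta_2-\beta_1,-\beta_2)$. The minor formed by the first three rows is $-\beta_1\beta_2(\beta_1+\beta_2)$. Hence, if $\beta_1\beta_2(\beta_1+\beta_2)\neq0$, the rank is $3$ and $\chi_{(3,1)}$ is absent. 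This covers both the generic case and the case $\beta_1=\beta_2\neq 0$, which explains why $\beta_1-\beta_2=0$ is grouped with the generic case in the statement.

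It remains to check that the rank is exactly $2$ in the three degenerate cases. The rank is at least $2$ whenever $(\beta_1,\beta_2)\neq(0,0)$, so we only need every row to lie in a fixed two-dimensional space.
\begin{itemize}
\item If $\beta_2=0$, all rows lie in the span of the first two coordinates.
\item If $\beta_1=0$, all rows lie in the span of the last two coordinates.
\item If $\beta_1=-\beta_2$, the fourth row vanishes, the third row equals the sum of the first two, and the fifth equals the second minus the first.
\end{itemize}
In each case the multiplicity is $3-2=1$, which gives $2\chi_{(4)}+\chi_{(3,1)}$.

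The main point needing care is not the linear algebra but the completeness of the list in Lemma \ref{consequences of M(2,1) in G}. One must know that every consequence of degree $4$ in the $(4)$- and $(3,1)$-isotypic components is a combination of the listed $t$'s, via Proposition \ref{consequences of degree n+1} and Example \ref{consequences in partitions}. I would take that lemma as given, so the proof reduces to the rank computations above.
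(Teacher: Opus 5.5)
Your proposal is correct and follows essentially the same route as the paper. Like the paper, you take Lemma~\ref{consequences of M(2,1) in G} as the complete list of degree-$4$ consequences, observe that the only relation in $W_d(4)$ is $t^{(4)}_{(1)}$, and read off the $(3,1)$-multiplicity from the rank of the $5\times 3$ matrix; your explicit minor $-\beta_1\beta_2(\beta_1+\beta_2)$ and the row dependencies in the three degenerate cases simply spell out the paper's ``direct computations'' (and you correctly use $m(3,1)=3$ in $P_4({\mathfrak B})$, where the paper's proof misprints $2\chi_{(3,1)}$).
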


\begin{proof}
By Lemma \ref{consequences of M(2,1) in G} the only consequence in $W_d(4)$ is
\[
t^{(4)}_{(1)}=\beta_1w_{(4)}^{(3)}-(\beta_1-\beta_2)w_{(4)}^{(2)}-\beta_2w_{(4)}^{(1)}=0.
\]
Since $\chi_4({\mathfrak B})=3\chi_{(4)}+2\chi_{(3,1)}$ we derive that $\chi_{(4)}$ participates with multiplicity 2 in $\chi_4({\mathfrak V}_{\beta})$.

We shall handle the case with identities $t^{(i)}_{(3,1)}=0$, $i=1,2,3,4,5$, as in the proof of Proposition \ref{chi4 for (3)}. We consider the linear system of equations
\begin{equation}
\text{
\begin{tabular}{|rrrrl}\label{identities from (2,1) to (3,1)}
$t^{(1)}_{(3,1)}=$&&$\beta_1w_{(3,1)}^{(1)}$&$+\beta_2w_{(3,1)}^{(0)}$&=0\\
$t^{(2)}_{(3,1)}=$&$\beta_1w_{(3,1)}^{(2)}$&$+\beta_2w_{(3,1)}^{(1)}$&&=0\\
$t^{(3)}_{(3,1)}=$&$\beta_1w_{(3,1)}^{(2)}$&&$+\beta_2w_{(3,1)}^{(0)}$&=0\\
$t^{(4)}_{(3,1)}=$&$$&$+(\beta_1+\beta_2)w_{(3,1)}^{(1)}$&$$&=0\\
$t^{(5)}_{(3,1)}=$&$\beta_1w_{(3,1)}^{(2)}$&$-(\beta_1-\beta_2)w_{(3,1)}^{(1)}$&$-\beta_2w_{(3,1)}^{(0)}$&=0\\
\end{tabular}
}
\end{equation}
Direct computations show that the rank of its matrix
\[
\left(\begin{matrix}
0&\beta_1&\beta_2\\
\beta_1&\beta_2&0\\
\beta_1&0&\beta_2\\
0&\beta_1+\beta_2&0\\
\beta_1&-(\beta_1-\beta_2)&-\beta_2
\end{matrix}\right)
\]
is equal to 3 if $\beta_1\beta_2(\beta_1+\beta_2)(\beta_1-\beta_2)\not=0$ or $\beta_1-\beta_2=0$ and
is equal to 2 if $\beta_1\beta_2(\beta_1+\beta_2)=0$ and this gives the multiplicity of $\chi_{(3,1)}$ in $\chi_4({\mathfrak V}_{\beta})$.
\end{proof}

\subsection{$S_n$-module structure}
We shall describe the $S_n$-module structure of $P_n({\mathfrak V}_{\beta})$.
The cases $n=1,2$ are clear and we shall consider the cases $n\geq 3$.

{\bf 1. Case $\bm{\beta_1\beta_2(\beta_1+\beta_2)(\beta_1-\beta_2)\not=0}$.}

\begin{proposition}\label{chi for general beta}
When $\beta_1\beta_2(\beta_1+\beta_2)(\beta_1-\beta_2)\not=0$ we have
\[
P_4({\mathfrak V}_{\beta})=2M(4),\;P_n({\mathfrak V}_{\beta})=M(n),\; n> 4.
\]
The corresponding irreducible $GL_d(K)$-submodules $W_d(2,1)$ and $W_d(n)$ in $G_d({\mathfrak V}_{\beta})$ are generated by
$w_{(2,1)}^{(1)}$ for $\lambda=(2,1)$, $w_{(n)}^{(n-1)}$ and $w_{(n)}^{(n-2)}$ for $n=3,4$ and of $w_{(n)}^{(n-1)}$ for $n>4$.
\end{proposition}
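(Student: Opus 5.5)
By Corollary \ref{bounds for cocharacters}, only the components $W_d(n)$ and $W_d(n-1,1)$ can occur in $G_d({\mathfrak V}_{\beta})$ for $n\geq 4$. The plan is to handle these two types separately. Degree $3$ is immediate: the single relation $\beta_1w_{(2,1)}^{(1)}+\beta_2w_{(2,1)}^{(0)}=0$ with $\beta_1\neq 0$ leaves $W_d(2,1)$ generated by $w_{(2,1)}^{(1)}$, while $W_d(3)$ is untouched and has basis $w_{(3)}^{(2)},w_{(3)}^{(1)}$. Degree $4$ is Proposition \ref{chi4 for (2,1)}. In the present case the $5\times 3$ matrix of (\ref{identities from (2,1) to (3,1)}) has rank $3$, so all $w_{(3,1)}^{(j)}=0$. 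The only relation in $W_d(4)$ is $t^{(4)}_{(1)}$, which has nonzero coefficient $-\beta_2$ at $w_{(4)}^{(1)}$. Hence $w_{(4)}^{(3)},w_{(4)}^{(2)}$ span, which gives $P_4=2M(4)$.

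\emph{Vanishing of $(n-1,1)$ for $n\geq 5$.} Every $w_{(n-1,1)}^{(j)}$ with $n\geq 5$ is obtained from some $w_{(3,1)}^{(i)}$ by multiplying by powers of $y_1$ and $z_1$. Recall that $G_d^2$ is commutative and associative, and in it left multiplication by $x_1$ acts as multiplication by $y_1$, right multiplication as multiplication by $z_1$. Since all $w_{(3,1)}^{(i)}=0$, all $w_{(n-1,1)}^{(j)}=0$ for $n\geq 5$.

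\emph{The $(n)$ components for $n\geq 5$.} I would first derive relations in degree $5$. Multiplying $t^{(4)}_{(1)}$ by $y_1$ and by $z_1$ gives two relations among the four vectors $w_{(5)}^{(4)},\dots,w_{(5)}^{(1)}$:
\[
\beta_1w^{(4)}-(\beta_1-\beta_2)w^{(3)}-\beta_2w^{(2)}=0,\qquad \beta_1w^{(3)}-(\beta_1-\beta_2)w^{(2)}-\beta_2w^{(1)}=0 .
\]
A third, independent relation must come from elsewhere. The natural source is the substitution $\Delta_1(y_1z_1,\ast)$ applied to the degree-$4$ identity $t^{(4)}_{(1)}$, which by Proposition \ref{consequences of degree n+1}(ii) is a consequence. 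Alternatively, one can substitute $x_2\mapsto x_1^2$ or $x_2\mapsto x_1x_1x_1$ into the linearization $p$, or use the vanishing of $(n-1,1)$ via $\Delta_2(y_1z_1,w_{(4,1)}^{(j)})$. For $\Delta_1(u,\ast)$ with $u=y_1z_1$, substituting $y_1\to y_1+\varepsilon y_1z_1$, $z_1\to z_1+\varepsilon y_1z_1$ and taking the linear term in $\varepsilon$ turns $y_1^az_1^b$ into $y_1z_1(a y_1^{a-1}z_1^b+b y_1^az_1^{b-1})$. This yields a relation of the form
\[
\beta_1(3w^{(4)}+w^{(3)})-(\beta_1-\beta_2)(2w^{(3)}+2w^{(2)})-\beta_2(w^{(2)}+3w^{(1)})=0 .
\]
The remaining step is to check that the $3\times 4$ system has rank $3$ exactly when $\beta_1\beta_2(\beta_1+\beta_2)(\beta_1-\beta_2)\neq 0$. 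That gives $\dim=1$ in degree $5$, and since $\beta_2\neq 0$ the coefficient structure shows $w_{(5)}^{(4)}$ survives as the generator. For $n>5$, the same three types of relation shifted by $y_1$ and $z_1$ should again leave a one-dimensional space. Alternatively, observe that once all $w_{(5)}^{(j)}$ are proportional, multiplying by $y_1$ and $z_1$ forces all $w_{(n)}^{(j)}$ to be proportional for $n\geq 5$.

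\emph{Nonvanishing.} To show the module in degree $n$ is nonzero, I would exhibit an algebra in ${\mathfrak V}_{\beta}$ on which $w_{(n)}^{(n-1)}\neq 0$. A one-dimensional algebra $a^2=a$ satisfies $xy=yx$ on its span, so it kills (\ref{PI (2,1)}) and all $w_{\lambda}$ with $\lambda_2>0$, while $w_{(n)}^{(j)}\mapsto a\neq 0$. This gives multiplicity exactly $1$. The main obstacle is producing and verifying the third independent relation in degree $5$, i.e. computing the rank of the resulting system. I expect the determinant to factor as a nonzero multiple of $\beta_1\beta_2(\beta_1+\beta_2)(\beta_1-\beta_2)$ times a power of $\beta$.
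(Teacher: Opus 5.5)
\textbf{Gap: the degree-$5$ rank computation.} Your plan follows the paper's route:
\begin{itemize}
\item degree $4$ from Proposition~\ref{chi4 for (2,1)};
\item the three degree-$5$ relations $y_1t^{(4)}_{(1)}$, $t^{(4)}_{(1)}z_1$ and $\Delta_1(y_1z_1,t^{(4)}_{(1)})$ (the paper writes the last as $s_{(4)}(y_1,y_1z_1,z_1,y_1z_1)$ with $s_{(4)}=\Delta_{12}(t^{(4)}_{(1)})$, which is the same polynomial);
\item propagation by multiplication with $y_1,z_1$;
\item the algebra $a^2=a$ for nonvanishing;
\item vanishing of the $w_{(n-1,1)}^{(j)}$ from that of the $w_{(3,1)}^{(i)}$.
\end{itemize}
The gap is that you stop at the one step that carries the content for $n\geq 5$. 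You only \emph{expect} the degree-$5$ system to have rank $3$, and the expectation you state is not correct.

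Your displayed third relation also has its outer coefficients interchanged relative to your own rule $y_1^az_1^b\mapsto a\,y_1^az_1^{b+1}+b\,y_1^{a+1}z_1^b$. The correct relation $R_3$ is $\beta_1w_{(5)}^{(4)}+(\beta_1+2\beta_2)w_{(5)}^{(3)}-(2\beta_1+\beta_2)w_{(5)}^{(2)}-\beta_2w_{(5)}^{(1)}=0$. Let $R_1,R_2$ be the relations obtained from $y_1t^{(4)}_{(1)}$ and $t^{(4)}_{(1)}z_1$. By luck your row equals $4(R_1+R_2)-R_3$, so the span is unaffected.

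\textbf{How to close it.} The computation is short. First, $R_3-R_1-R_2=(\beta_1+\beta_2)(w_{(5)}^{(3)}-w_{(5)}^{(2)})$. Using $w_{(5)}^{(3)}=w_{(5)}^{(2)}$, $R_1$ becomes $\beta_1(w_{(5)}^{(4)}-w_{(5)}^{(3)})=0$ and $R_2$ becomes $\beta_2(w_{(5)}^{(3)}-w_{(5)}^{(1)})=0$. Hence all $w_{(5)}^{(i)}$ are equal as soon as $\beta_1\beta_2(\beta_1+\beta_2)\neq 0$.

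So the relevant $3\times 3$ minor is $\beta_1\beta_2(\beta_1+\beta_2)$, with no factor $\beta_1-\beta_2$, and the rank is not $3$ ``exactly when'' $\beta_1\beta_2(\beta_1+\beta_2)(\beta_1-\beta_2)\neq0$. The rank is still $3$ for $\beta_1=\beta_2$, which agrees with Case~5 of the paper. Equality of the $w_{(5)}^{(i)}$ then passes to every $n\geq 5$ by multiplying with $y_1$ and $z_1$.

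\textbf{A shorter route.} Your alternative works with no linear algebra if you put it in the right degree. $\Delta_2(y_1z_1,w_{(4,1)}^{(j)})$ lands in degree $6$. Instead, since all $w_{(3,1)}^{(j)}=0$, the substitution $x_2\mapsto x_1x_1$ gives $w_{(3,1)}^{(j)}(y_1,y_1z_1,z_1,y_1z_1)=w_{(5)}^{(j+2)}-w_{(5)}^{(j+1)}=0$ for $j=0,1,2$. This is exactly the argument the paper uses in the case $\beta_1=\beta_2$.
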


\begin{proof}
The statement of the proposition is clear for $n<4$. The identity $t^{(1)}_{(4)}=0$ from Lemma \ref{consequences of M(2,1) in G}  gives that
$w_{(4)}^{(1)}$ can be presented as a linear combination of $w_{(4)}^{(2)}$ and $w_{(4)}^{(3)}$ in $G_d({\mathfrak V}_{\beta})$.
Since there are no other identities $w_{(4)}=0$ in $G_d({\mathfrak V}_{\beta})$,
we obtain the case $n=4$. From the partial linearization $s_{(4)}(y_1,y_2,z_1,z_2)=\Delta_{12}(t^{(1)}_{(4)}(y_1,z_1))=0$ we obtain
$s_{(4)}(y_1,y_1z_1,z_1,y_1z_1)=0$. Together with $y_1t^{(1)}_{(4)}(y_1,z_1)=t^{(1)}_{(4)}(y_1,z_1)z_1=0$
we obtain a system of three equations in $w^{(i)}_{(5)}$, $i=1,2,3,4$:
\[
\text{
\begin{tabular}{|rrrrl}
$\beta_1w_{(4)}^{(4)}$&$-(\beta_1-\beta_2)w_{(4)}^{(3)}$&$-w_{(4)}^{(2)}$&&=0\\
&$\beta_1w_{(4)}^{(3)}$&$-(\beta_1-\beta_2)w_{(4)}^{(2)}$&$-w_{(4)}^{(1)}$&=0\\
$\beta_1w_{(4)}^{(4)}$&$+(\beta_1+2\beta_2)w_{(4)}^{(3)}$&$-(2\beta_1+\beta_2)w_{(4)}^{(2)}$&$-\beta_2w_{(4)}^{(1)}$&=0\\
\end{tabular}
}
\]
This system has a unique solution
\[
w^{(i)}_{(5)}=w^{(1)}_{(5)},\,i=2,3,4.
\]
From here we obtain that $w^{(i)}_{(n)}=w^{(n-1)}_{(n)}$ for all $n\geq 5$ and $i=1,2,\ldots,n-2$.
Since $w^{(n-1)}_{(n)}\not=0$ in the one-dimensional algebra $A$ with basis $\{a\}$ which we used before
and $A\in{\mathfrak V}_{\beta}$, we obtain that $w^{(n-1)}_{(n)}\not=0$ in $G_d({\mathfrak V}_{\beta})$.

Finally, by Proposition \ref{chi4 for (2,1)} $w^{(1)}_{(3,1)}=w^{(0)}_{(3,1)}=0$ in $G_d({\mathfrak V}_{\beta})$
and hence $w^{(i)}_{(n-1,1)}=0$ for all $n\geq 4$, $i=0,1,\ldots,n-2$.
\end{proof}

{\bf 2. Case $\bm{\beta_2=0}$.} The variety ${\mathfrak V}_{\beta}$ satisfies the identity $x_1[x_1,x_2]=0$ which in $G_d({\mathfrak V}_{\beta})$ becomes
\[
w_{(2,1)}(y_1,y_2,z_1,z_2)=y_1(y_1z_2-y_2z_1)=0.
\]

\begin{proposition}\label{chi for beta2=0}
When $\beta_2=0$ we have
\[
P_n({\mathfrak V}_{\beta})=2M(n)+M(n-1,1),\; n\geq 3.
\]
The corresponding irreducible $GL_d(K)$-submodules $W_d(n)$ and $W_d(n-1,1)$ in $G_d{\mathfrak V}_{\beta})$ in $G_d$ are generated by
$w_{(n)}^{(1)}$ and $w_{(n)}^{(2)}$ for $\lambda=(n)$ and of $w_{(n-1,1)}^{(0)}$ for $\lambda=(n-1,1)$.
\end{proposition}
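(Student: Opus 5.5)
The plan is to handle the two kinds of components separately, since by Corollary~\ref{bounds for cocharacters} only $\chi_{(n)}$ and $\chi_{(n-1,1)}$ can occur in $\chi_n({\mathfrak V}_{\beta})$. For each of them I will first prove an upper bound in $G_d({\mathfrak V}_{\beta})$: most of the highest weight vectors $w_{\lambda}^{(j)}$ of Lemma~\ref{hwv of G} vanish or coincide. Then I will prove a lower bound: the claimed generators are nonzero and linearly independent. Throughout, the defining identity is $w_{(2,1)}^{(1)}=y_1(y_1z_2-y_2z_1)=0$.

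\emph{Upper bound.} For $\lambda=(n-1,1)$ and $j\geq 1$ the vector $w_{(n-1,1)}^{(j)}=y_1^{j}(y_1z_2-y_2z_1)z_1^{n-2-j}$ is obtained from $w_{(2,1)}^{(1)}$ by left multiplications by $x_1$ and right multiplications by $x_1$. In $G_d$ these act as multiplication by $y_1$ and by $z_1$, respectively. Hence $w_{(n-1,1)}^{(j)}=0$ for $j\geq 1$, and $W_d(n-1,1)$ is generated by $w_{(n-1,1)}^{(0)}$.

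For $\lambda=(n)$, Lemma~\ref{consequences of M(2,1) in G}(ii) with $\beta_2=0$ gives $\beta_1(w_{(4)}^{(3)}-w_{(4)}^{(2)})=0$, i.e.\ $y_1^3z_1=y_1^2z_1^2$. Multiplying on the left by $y_1^a$ and on the right by $z_1^b$, I will show by induction on $n$ that $w_{(n)}^{(j)}=w_{(n)}^{(2)}$ for all $2\leq j\leq n-1$ and $n\geq 3$. This leaves at most the two generators $w_{(n)}^{(1)}$ and $w_{(n)}^{(2)}$. The case $n=3$ is trivial, since there are only two vectors.

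\emph{Lower bound.} The key observation will be a $Y$-degree argument, as in the proof of Proposition~\ref{chi for alpha2=0}. Every monomial of $w_{(2,1)}^{(1)}$ has $Y$-degree $2$. Substituting an element of $G_d$ for a variable replaces $y_i$ by an element of $Y$-degree $\geq 1$. Multiplication by $x_i$ on either side does not lower the $Y$-degree. Therefore every element of the T-ideal generated by $w_{(2,1)}^{(1)}$ is a combination of monomials of $Y$-degree $\geq 2$.

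Now $w_{(n)}^{(1)}$ and $w_{(n-1,1)}^{(0)}$ have $Y$-degree $1$, so they are nonzero in $G_d({\mathfrak V}_{\beta})$. Suppose $c_1w_{(n)}^{(1)}+c_2w_{(n)}^{(2)}$ lies in the T-ideal. Its $Y$-degree-one part $c_1w_{(n)}^{(1)}$ must vanish, so $c_1=0$. It then remains to show $w_{(n)}^{(2)}\neq 0$. For this I will use the one-dimensional algebra $A=Ka$, $a^2=a$. It is commutative, hence satisfies $x[x,y]=0$ and belongs to ${\mathfrak V}_{\beta}$, and the substitution $x_1\mapsto a$ sends $w_{(n)}^{(2)}$ to $a\neq 0$. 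This gives multiplicity exactly $2$ for $M(n)$ and $1$ for $M(n-1,1)$.

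The main obstacle I expect is making the $Y$-degree claim rigorous for all consequences, including those coming from substitutions $x_i\mapsto x_jx_k$ and from linearizations. The point to check is that the T-ideal is spanned by images of $w_{(2,1)}^{(1)}$ under endomorphisms of $G_d$ followed by multiplications. Each endomorphism maps $y_i$ to an element whose $Y$-degree is at least $1$, because the $Y$-part of the image of any element of $X_d\cup G_d^2$ is of positive degree. So $Y$-degree can only grow, and this is the step I would verify first.
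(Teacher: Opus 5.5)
Your proof is correct. For $\lambda=(n-1,1)$ it is the paper's argument: the $w_{(n-1,1)}^{(j)}$ with $j\geq 1$ are two-sided multiples of $w_{(2,1)}^{(1)}$, and $w_{(n-1,1)}^{(0)}$ survives because the $Y$-degree cannot drop. The upper bound for $\lambda=(n)$ is also essentially the paper's, since the relations $w_{(n)}^{(k+3)}=w_{(n)}^{(k+2)}$ come from two-sided multiples of $y_1^3z_1-y_1^2z_1^2$.

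You differ from the paper in the lower bound for the multiplicity of $M(n)$. The paper substitutes monomials in $y_1,z_1$ into the linearization $s_{(2,1)}$, computes one substitution type and asserts the others. From this it concludes that every one-variable consequence is divisible by $y_1^2z_1(z_1-y_1)$, and reads off that $w_{(n)}^{(1)}$ and $w_{(n)}^{(2)}$ are independent. You replace that case analysis by two invariants of the T-ideal:
\begin{itemize}
\item it lies in the span of monomials of $Y$-degree $\geq 2$;
\item it is killed by the evaluation $x_1\mapsto a$ in $A=Ka$, $a^2=a$, which on $\sum_j c_jw_{(n)}^{(j)}$ just sums the coefficients.
\end{itemize}
Within the span of $w_{(n)}^{(1)},\ldots,w_{(n)}^{(n-1)}$, these two conditions cut out exactly the multiples of $y_1^2z_1(z_1-y_1)$. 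So your argument is in effect a conceptual proof of the paper's divisibility claim, with no substitution types left to check. The paper's computation, in turn, exhibits the relevant consequences explicitly.

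The step you flagged is fine. An endomorphism with $x_i\mapsto \ell_i+u_i$ ($\ell_i$ linear, $u_i\in G_d^2$) acts on $G_d^2$ by $y_i\mapsto \ell_i(Y)+u_i$ and $z_i\mapsto \ell_i(Z)+u_i$. Multiplying by $\ell+u$ on the left or right multiplies by $\ell(Y)+u$ or $\ell(Z)+u$. Hence the $Y$-degree $\geq 2$ part of $G_d^2$ is a T-ideal containing $w_{(2,1)}^{(1)}$.
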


\begin{proof}
Let
\[
s_{(2,1)}(y_1,y_2,y_3,z_1,z_2,z_3)=y_1(y_2z_3-y_3z_2)+y_2(y_1z_3-y_3z_1)=0
\]
be the partial linearization of $w_{(2,1)}=0$. All consequences of the form $t(y_1,z_1)=0$ are linear combinations of
polynomials obtained from $s_{(2,1)}=0$ after substitution of the variables with monomials and multiplication by monomials from both sides.
They are of the following form:
\[
y_1^ps_{(2,1)}(y_1^{a_1}z_1^{a_2},y_1,y_1,y_1^{a_1}z_1^{a_2},z_1,z_1)z_1^q,\; y_1^ps_{(2,1)}(y_1,y_1,y_1^{c_1}z_1^{c_2},z_1,z_1,y_1^{c_1}z_1^{c_2})z_1^q,
\]
\[
y_1^ps_{(2,1)}(y_1^{a_1}z_1^{a_2},y_1,y_1^{c_1}z_1^{c_2},y_1^{a_1}z_1^{a_2},z_1,y_1^{c_1}z_1^{c_2})z_1^q,
\]
\[
y_1^ps_{(2,1)}(y_1^{a_1}z_1^{a_2},y_1^{b_1}z_1^{b_2},y_1^{c_1}z_1^{c_2},y_1^{a_1}z_1^{a_2},y_1^{b_1}z_1^{b_2},y_1^{c_1}z_1^{c_2})z_1^q,
\]
$p,q\geq 0$, $a_1,a_2,b_1,b_2,c_1,c_2\geq 1$.
Direct computations show that
\[
y_1^ps_{(2,1)}(y_1^{a_1}z_1^{a_2},y_1,y_1,y_1^{a_1}z_1^{a_2},z_1,z_1)z_1^q=y_1^{p+a_1+1}z_1^{q+a_2}(z_1-y_1)=0.
\]
Since $a_1,a_2\geq 1$ we obtain that this expression is divisible by $y_1^2(z_1-y_1)z_1$ and we obtain the identities
\[
w_{(n)}^{(k+3)}(y_1,z_1)=w_{(n)}^{(k+2)}(y_1,z_1),\;k=0,1,\ldots,n-1.
\]
The same holds for the other consequences of the form $t(y_1,z_1)=0$. This means that the identities under considerations are equivalent to
\[
w_{(n)}^{(n-1)}=w_{(n)}^{(n-2)}=\cdots=w_{(n)}^{(2)}
\]
and the highest weight vectors of $W_d(n)\subset F_d({\mathfrak V}_{\beta})$ are linear combinations of the linearly independent elements
corresponding to $w_{(n)}^{(2)}$ and $w_{(n)}^{(1)}$.

Since $w_{(n-1,1)}^{(k)}=y_1^{k-1}w_{(2,1)}^{(1)}z_1^{n-k-2}$ for $k\geq 1$ we obtain that for $k\geq 1$ all $w_{(n-1,1)}^{(k)}$ follow from $w_{(2,1)}^{(1)}=0$.
Clearly, $w_{(n-1,1)}^{(0)}$ does not follow from $w_{(2,1)}^{(1)}=0$ because $\deg_y(w_{(n-1,1)}^{(0)})=1$, $\deg_y(w_{(2,1)}^{(1)})=2$
and the degree in $y$ does not decrease in the consequences. Hence $W_d(n-1,1)\subset F_d({\mathfrak V}_{\beta}$ has for a highest weight vector $w_{(n-1,1)}^{(0)}$.
\end{proof}

{\bf 3. Case $\bm{\beta_1=0}$.} This case is similar to the previous case $\beta_2=0$.

{\bf 4. Case $\bm{\beta_1+\beta_2=0}$.} The algebra $G_d({\mathfrak V}_{\beta})$ satisfies the identity
\[
w_{(2,1)}(y_1,y_2,z_1,z_2)=(y_1-z_1)(y_1z_2-y_2z_1)=0.
\]

\begin{proposition}\label{chi for beta1=beta2}
When $\beta_1+\beta_2=0$ we have
\[
P_n({\mathfrak V}_{\beta})=2M(n)+M(n-1,1),\; n\geq 3.
\]
The two corresponding irreducible $GL_d(K)$-submodules $W_d(n)$ and the $GL_d(K)$-submodule $W_d(n-1,1)$, $n\geq 3$, are generated, respectively, by
$w_{(n)}^{(n-2)}$ and $w_{(n)}^{(n-1)}$ and by $w_{(n-1,1)}^{(n-2)}$.
\end{proposition}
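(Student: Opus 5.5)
By Corollary \ref{bounds for cocharacters}, only $\lambda=(n)$ and $\lambda=(n-1,1)$ can occur in $\chi_n({\mathfrak V}_{\beta})$. I plan to treat the two shapes separately, following the proof of Proposition \ref{chi for beta2=0}. Take $\beta_1=1$, $\beta_2=-1$, so the defining identity is $(y_1-z_1)(y_1z_2-y_2z_1)=0$.

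\emph{Shape $(n)$.} Let $s(y_1,y_2,y_3,z_1,z_2,z_3)=(y_1-z_1)(y_2z_3-y_3z_2)+(y_2-z_2)(y_1z_3-y_3z_1)$ be the partial linearization. Every consequence in $K[y_1,z_1]$ is a linear combination of terms $y_1^p\,s(u_1,u_2,u_3)\,z_1^q$. Here each $u_i$ is either $x_1$ (giving the pair $y_1,z_1$) or a monomial $y_1^{a}z_1^{b}$ with $a,b\ge1$, which enters in both the $y$- and $z$-slot. I would evaluate $s$ on these substitutions.
\begin{itemize}
\item If $u_2=u_3$ is the same monomial, the brackets vanish.
\item If $u_1=u_2=x_1$ and $u_3=m$, we get $s=2(y_1-z_1)(y_1m-mz_1)=2m(y_1-z_1)^2$.
\item If $u_1=m$ and $u_2=u_3=x_1$, we get $s=(y_1-z_1)(m z_1-m y_1)=-m(y_1-z_1)^2$.
\item In general each term is divisible by $y_1z_1(y_1-z_1)^2$.
\end{itemize}
Conversely, $y_1z_1(y_1-z_1)^2$ itself occurs; this is $t^{(4)}_{(1)}$ with $\beta_1=-\beta_2$. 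So the identities in $W_d(n)$ form exactly the span of $y_1^{i}z_1^{j}(y_1-z_1)^2$ with $i,j\ge1$, $i+j=n-2$. This space has dimension $n-3$ inside the $(n-1)$-dimensional span of $w^{(1)}_{(n)},\dots,w^{(n-1)}_{(n)}$, so the multiplicity is $2$.

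Modulo the ideal generated by $(y_1-z_1)^2$ in $y_1z_1K[y_1,z_1]$, the quotient is spanned by the top two monomials $w^{(n-1)}_{(n)}=y_1^{n-1}z_1$ and $w^{(n-2)}_{(n)}=y_1^{n-2}z_1^2$; these are independent because reduction substitutes $z_1=y_1+\varepsilon$ with $\varepsilon^2=0$. This gives the stated generators.

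\emph{Shape $(n-1,1)$.} Put $c=y_1z_2-y_2z_1$. I would check by a direct computation that $c\,(y_1-z_1)$ generates all consequences. Multiplying by $y_1$ and $z_1$ gives $y_1^{k}z_1^{l}(y_1-z_1)c=0$, so all $w^{(k)}_{(n-1,1)}$ become equal, and hence equal to $w^{(n-2)}_{(n-1,1)}$. Proposition \ref{chi4 for (2,1)} confirms that for $n=4$ there is exactly one copy, and the substitution identities $t^{(3)},t^{(5)}$ reduce to this relation when $\beta_1=-\beta_2$. For general $n$ I need that substitutions produce nothing new. Substituting monomials into $s$ yields expressions divisible by $(y_1-z_1)c$ or by $c(y_1-z_1)^2$; the latter terms come from $y_1m-mz_1=m(y_1-z_1)$ being linear combinations. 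So the relations are contained in the ideal generated by $(y_1-z_1)c$.

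\emph{Nonvanishing.} This is the main obstacle: I must show $w^{(n-2)}_{(n-1,1)}\neq0$ and that $w^{(n-1)}_{(n)}$ and $w^{(n-2)}_{(n)}$ remain independent in $G_d({\mathfrak V}_\beta)$, that is, that the ideal computation above is complete. To settle it, I would exhibit an algebra in ${\mathfrak V}_\beta$, guided by the ideal description: take the subalgebra of $G_d$ modulo the ideal generated by $(y_1-z_1)$-differences of all variables, i.e.\ set $z_i=y_i+\varepsilon_i$ with $\varepsilon_i\varepsilon_j=0$. This algebra satisfies the identity, since $(y_1-z_1)c=-\varepsilon_1(y_1\varepsilon_2-y_2\varepsilon_1)=0$. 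In it, $y_1^{n-1}(y_1+\varepsilon_1)$, $y_1^{n-2}(y_1+\varepsilon_1)^2$ and $y_1^{n-1}\varepsilon_2-y_1^{n-2}y_2\varepsilon_1$ are nonzero and independent, which establishes the lower bounds.
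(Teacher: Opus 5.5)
Your proof is correct. Its upper-bound half is the paper's: $t^{(4)}_{(1)}=y_1z_1(y_1-z_1)^2$ multiplied by $y_1^i$, $z_1^j$, together with $y_1^k(y_1-z_1)c\,z_1^l=0$, cut the multiplicities down to $2$ and $1$.

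The lower bounds are where you differ. The paper simply asserts that every one-variable consequence is divisible by $y_1z_1(y_1-z_1)^2$. It then gets $w^{(n-2)}_{(n-1,1)}\neq0$ indirectly: the substitution $x_2\mapsto x_1^2$ turns it into $w^{(n)}_{(n+1)}-w^{(n-1)}_{(n+1)}$, which is nonzero by the independence already established in degree $n+1$. You do two things instead.
\begin{enumerate}
\item You actually carry out the divisibility computation. Your observation that $s$ collapses to $2m(y_1-z_1)^2$ or $-m(y_1-z_1)^2$ when one argument lies in $G_d^2$, and to $0$ when two do, is exactly the verification the paper omits.
\item You certify all three lower bounds at once with the model $A=G_d/J$, where $J=G_d^2\cap\left((z_i-y_i)(z_j-y_j)\right)_{i,j}$.
\end{enumerate}
The paper's route is shorter and recycles the $(n)$ case. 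Yours is self-contained and gives more. Since $T(\mathfrak V_\beta)\cap G_d\subseteq J$ and the multiplicities of the two quotients agree, $J$ is exactly the T-ideal. So you obtain an explicit description $G_d(\mathfrak V_\beta)\cong G_d/J$ of the relatively free algebra.

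Two points need tightening.

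First, you check only that the generic element $(y_1-z_1)c$ maps to $0$ in $A$. That alone does not put $A$ in $\mathfrak V_\beta$, since $A$ is not yet known to be relatively free. Add the one-line remark that an endomorphism $x_i\mapsto g_i$ of $G_d$ sends $z_i-y_i$ to a linear combination of the $z_k-y_k$, because the $G_d^2$-part of $g_i$ cancels. Hence $J$ is stable under all substitutions, i.e.\ it is a T-ideal, and the generic check suffices.

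Second, your sentence that substituting monomials into $s$ gives expressions divisible by $(y_1-z_1)c$ or $c(y_1-z_1)^2$ is false as stated. For example, $s(x_1,x_2,x_1^2)=2y_1z_1(y_1-z_1)(y_2-z_2)$ is a consequence. At $y_2=y_1$, $z_2=z_1$ it equals $2y_1z_1(y_1-z_1)^2\neq0$, while $c$ vanishes there, so it is not divisible by $c$. The claim could only hold after projecting to highest weight vectors. It is not needed once you have $A$, so drop it.
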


\begin{proof}
The case $n=4$ was handles in Proposition \ref{chi4 for (2,1)}. The explicit form of the highest weight vectors for $\lambda=(4)$ follows immediately
from Lemma \ref{consequences of M(2,1) in G}.
Since $\beta_2=-\beta_1$ the only relation between $w_{(4)}^{(i)}$, $i=1,2,3$, becomes
\[
t^{(4)}_{(1)}=w_{(4)}^{(3)}-2w_{(4)}^{(2)}+w_{(4)}^{(1)}=(y_1-z_1)^2y_1z_1=0
\]
and this implies that the highest weight vectors $w_{(n)}^{(k)}$, $k=1,2,\ldots,n-3$,
can be expressed as linear combinations of $w_{(n)}^{(n-1)}$ and $w_{(n)}^{(n-2)}$.
The consequences of higher degree of the form $w_{(n)}(y_1,z_1)$ are obtained by replacement of some of the the variables in the linearization of $w_{(2,1)}$
by $y_1^pz_1^q$ and multiplication by $y_1$ from the left and by $z_1$ from the right.
It turns out that as elements in $G_d({\mathfrak V}_{\beta}$ in all the cases the consequences are divisible by $y_1z_1(y_1-z_1)^2$. This means that the consequences follow from $t^{(4)}_{(1)}=0$
by multiplication from both sizes and $M(n)$ participates in $P_n({\mathfrak V}_{\beta})$ with multiplicity 2.

The identity $w_{(2,1)}(y_1,y_2,z_1,z_2)=0$ gives that $y_1(y_1z_2-y_2z_1)=(y_1z_2-y_2z_1)z_1$ and this implies that
$w_{(n-1,1)}^{(k+1)}=w_{(n-1,1)}^{(k)}$ for $n\geq 3$ and $k=0,1,\ldots,n-3$.
To complete the proof we have to show that $w_{(n-1,1)}^{(n-2)}$ is not an identity. Direct computation gives that
\[
w_{(n-1,1)}^{(n-2)}(y_1,y_1z_1,z_1,y_1z_1)=y_1^{n-1}z_1(y_1-z_1)=w_{(n+1)}^{(n)}-w_{(n+1)}^{(n-1)}.
\]
If $w_{(n-1,1)}^{(n-2)}=0$ in $G_d({\mathfrak V}_{\beta})$ we would obtain that $w_{(n+1)}^{(n)}=w_{(n+1)}^{(n-1)}$ in $G_d({\mathfrak V}_{\beta})$
which contradicts with the linear independence of these two polynomials. Hence $w_{(n-1,1)}^{(n-2)}\not=0$ in $G_d({\mathfrak V}_{\beta})$.
\end{proof}

{\bf 5. Case $\bm{\beta_1-\beta_2=0}$.} In the language of $G_d({\mathfrak V}_{\beta})$ the variety ${\mathfrak V}_{\beta}$ satisfies the identity
\[
w_{(2,1)}(y_1,y_2,z_1,z_2)=(y_1+z_1)(y_1z_2-y_2z_1)=0.
\]

\begin{proposition}\label{chi for beta1=beta2}
When $\beta_1-\beta_2=0$ we have
\[
P_4({\mathfrak V}_{\beta})=2M(4),\;
P_n({\mathfrak V}_{\beta})=M(n),\; n\geq 5.
\]
The two corresponding irreducible $GL_d(K)$-submodules $W_d(4)$ and the $GL_d(K)$-submodules $W_d(n)$, $n\geq 5$, are generated, respectively, by
$w_{(4)}^{(1)}$ and $w_{(4)}^{(2)}$ and by $w_{(n)}^{(n-1)}$.
\end{proposition}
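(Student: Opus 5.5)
By Corollary \ref{bounds for cocharacters} only $\lambda=(n)$ and $\lambda=(n-1,1)$ can occur in $\chi_n({\mathfrak V}_{\beta})$. So I would treat these two shapes separately, taking $\beta_1=\beta_2=1$, i.e. the identity $(y_1+z_1)(y_1z_2-y_2z_1)=0$.

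\textbf{The shape $(n-1,1)$.} Proposition \ref{chi4 for (2,1)} with $\beta_1-\beta_2=0$ gives $\chi_4({\mathfrak V}_{\beta})=2\chi_{(4)}$. Hence $w_{(3,1)}^{(i)}=0$ for $i=0,1,2$. One can also read this off the system (\ref{identities from (2,1) to (3,1)}) for $\beta_1=\beta_2=1$: $t^{(4)}_{(3,1)}$ gives $w_{(3,1)}^{(1)}=0$, then $t^{(1)}_{(3,1)}$ gives $w_{(3,1)}^{(0)}=0$, and $t^{(2)}_{(3,1)}$ gives $w_{(3,1)}^{(2)}=0$. Multiplying from the left by $y_1$ and from the right by $z_1$ yields every $w_{(n-1,1)}^{(j)}$ with $n\ge 4$, so $\chi_{(n-1,1)}$ does not occur for $n\ge4$.

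\textbf{The shape $(4)$.} Lemma \ref{consequences of M(2,1) in G} says the only relation among $w_{(4)}^{(1)},w_{(4)}^{(2)},w_{(4)}^{(3)}$ is $t^{(4)}_{(1)}=w_{(4)}^{(3)}-w_{(4)}^{(1)}=0$. So $W_d(4)$ occurs twice, with highest weight vectors $w_{(4)}^{(1)}$ and $w_{(4)}^{(2)}$.

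\textbf{The shape $(n)$, $n\ge5$.} Following the proof of Proposition \ref{chi for general beta}, I would collect the consequences in $W_d(5)$:
\begin{itemize}
\item $y_1t^{(4)}_{(1)}=w_{(5)}^{(4)}-w_{(5)}^{(2)}$ and $t^{(4)}_{(1)}z_1=w_{(5)}^{(3)}-w_{(5)}^{(1)}$;
\item the substitution $s_{(4)}(y_1,y_1z_1,z_1,y_1z_1)$, where $s_{(4)}=\Delta_{12}(t^{(4)}_{(1)})$.
\end{itemize}
For the substitution I would first compute $t^{(4)}_{(1)}=(y_1-z_1)(y_1+z_1)y_1z_1$. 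Substituting $y_1z_1$ for one copy of $y_1$ and one copy of $z_1$ should give an identity of the form $c(y_1-z_1)y_1^2z_1^2$. Whether $c$ is nonzero is the crucial point. If $c\ne0$, we get $w_{(5)}^{(3)}=w_{(5)}^{(2)}$. Combined with the two relations above, this gives $w_{(5)}^{(1)}=w_{(5)}^{(3)}=w_{(5)}^{(2)}=w_{(5)}^{(4)}$, so all $w_{(5)}^{(i)}$ coincide. Multiplying on both sides then gives $w_{(n)}^{(i)}=w_{(n)}^{(n-1)}$ for all $n\ge5$.

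\textbf{Nonvanishing.} The one-dimensional algebra $A$ with $a^2=a$ lies in ${\mathfrak V}_{\beta}$, because every $w_\lambda$ with $\lambda_2>0$ vanishes on $A$. Since $w_{(n)}^{(n-1)}$ maps to $a\ne0$, it is nonzero in $G_d({\mathfrak V}_{\beta})$ and the multiplicity of $\chi_{(n)}$ is exactly $1$.

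\textbf{Main obstacle.} The hard step is establishing $c\ne0$. Concretely, this means showing that the consequences of $w_{(2,1)}=0$ produced by substitutions with elements of $W_d(2)$ and $W_d(1^2)$ are not all divisible by $(y_1-z_1)(y_1+z_1)$. If the computation of $s_{(4)}(y_1,y_1z_1,z_1,y_1z_1)$ gave a multiple of $(y_1+z_1)$, I would instead use $p_{(1^2)}(y_1,y_2,y_1z_1,z_1,z_2,y_1z_1)$ evaluated at $y_2=y_1$, or substitutions of $y_1z_2-y_2z_1$ into the linearization $p$. I would look for a factor $y_1z_1(y_1-z_1)$ times a monomial of degree $2$, which gives the needed extra relation.
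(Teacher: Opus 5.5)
\textbf{The gap is in the case $\lambda=(n)$, $n\ge 5$.} Your treatment of the shapes $(n-1,1)$ and $(4)$, and the nonvanishing argument via the algebra $A$, are fine and agree with the paper. The gap is in the upper bound $m(n)\le 1$ for $n\ge 5$, which is the heart of the statement. You reduce it to the nonvanishing of a coefficient $c$ and then leave that unverified, calling it the ``main obstacle''. Also, the substitution does not literally have the form you predict. One computes
\[
s_{(4)}(y_1,y_1z_1,z_1,y_1z_1)=y_1z_1(y_1-z_1)(y_1^2+4y_1z_1+z_1^2)=w_{(5)}^{(4)}+3w_{(5)}^{(3)}-3w_{(5)}^{(2)}-w_{(5)}^{(1)}.
\]
Only after subtracting $y_1t^{(4)}_{(1)}+t^{(4)}_{(1)}z_1=y_1z_1(y_1-z_1)(y_1+z_1)^2$ does one get $2y_1^2z_1^2(y_1-z_1)=2(w_{(5)}^{(3)}-w_{(5)}^{(2)})$. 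So $c=2$ and your route does close, but that computation is exactly what the proposal omits.

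Your fallbacks would not help. For instance, $p_{(1^2)}(y_1,y_2,y_1z_1,z_1,z_2,y_1z_1)=2y_1z_1(y_1z_2-y_2z_1)$. It vanishes under the only legitimate substitution $x_2\mapsto x_1$, which sets $y_2=y_1$ and $z_2=z_1$ simultaneously.

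\textbf{The missing idea.} You already had the needed relation in hand. Having shown $w_{(3,1)}^{(k)}=0$ for $k=0,1,2$, substitute $x_2\mapsto x_1x_1$, i.e.\ $y_2,z_2\mapsto y_1z_1$:
\[
w_{(3,1)}^{(k)}(y_1,y_1z_1,z_1,y_1z_1)=y_1^{k+1}z_1^{3-k}(y_1-z_1)=w_{(5)}^{(k+2)}-w_{(5)}^{(k+1)},\quad k=0,1,2.
\]
This is precisely the ``factor $y_1z_1(y_1-z_1)$ times a monomial of degree $2$'' you were looking for. It gives $w_{(5)}^{(1)}=w_{(5)}^{(2)}=w_{(5)}^{(3)}=w_{(5)}^{(4)}$ at once, with no computation involving $t^{(4)}_{(1)}$; this is how the paper argues. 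Your two-sided multiplication argument then extends the equalities to all $n\ge 5$, and the algebra $A$ shows $w_{(n)}^{(n-1)}\neq 0$, completing the proof.
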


\begin{proof}
The case $n=4$ was handles in Proposition \ref{chi4 for (2,1)}. The explicit form of the highest weight vectors for $\lambda=(4)$ follows immediately
from Lemma \ref{consequences of M(2,1) in G}.
Since $\beta_1=\beta_2$ the only relation between $w_{(4)}^{(i)}$, $i=1,2,3$, becomes
\[
t^{(4)}_{(1)}=w_{(4)}^{(3)}-w_{(4)}^{(1)}=0.
\]
Hence the highest weight vectors $w_{(4)}^{(3)}$ and $w_{(4)}^{(2)}$ are linearly independent and this closes the case $\lambda=(4)$.

Since $w_{(3,1)}^{(k)}=0$, $k=0,1,2$, we obtain
\[
w_{(3,1)}^{(k)}(y_1,y_1z_1,z_1,y_1z_1)=w_{(5)}^{(k+2)}-w_{(5)}^{(k+1)}
\]
i.e. $w_{(5)}^{(4)}=w_{(5)}^{(3)}=w_{(5)}^{(2)}=w_{(5)}^{(1)}$. But $w_{(5)}^{(4)}\not=0$ in $G_d({\mathfrak V}_{\beta})$ because
it does not vanish in the one-dimensional algebra $A$ which we already considered.

Since $w_{(3,1)}^{(k)}=0$, $k=0,1,2$, by multiplication with $y_1$ and $z_1$ we derive that $w_{(n-1,1)}^{(i)}=0$ for $n\geq 5$ and all $i$.
\end{proof}

\subsection{The lattice of subvarieties}
We follow the same way of considerations as in the case of ${\mathfrak U}_{\alpha}$.

{\bf 1. Case $\bm{\beta_1\beta_2(\beta_1+\beta_2)(\beta_1-\beta_2)\not=0}$.}

\begin{theorem}\label{lattice (2,1) case 1}
When $\beta_1\beta_2(\beta_1+\beta_2)(\beta_1-\beta_2)\not=0$ the consequences in $G_d({\mathfrak V}_{\beta})$ of degree $n+1$ of the polynomial $w_{\lambda}^{(n)}$, $\lambda\vdash n$,
are equivalent to the following polynomials:

{\rm (i)} For $w_{(1)}=x_1$: $w_{(2)}^{(1)}=y_1z_1$, $w_{(1^2)}^{(0)}=y_1z_2-y_2z_1$;

{\rm (ii)} For $w_{(2)}^{(1)}=y_1z_1$: $w_{(3)}^{(2)}=y_1^2z_1$, $w_{(3)}^{(1)}=y_1z_1^2$, $w_{(2,1)}^{(1)}$;

{\rm (iii)} For $w_{(1^2)}^{(0)}=y_1z_2-y_2z_1$: $w_{(3)}^{(2)}-w_{(3)}^{(1)}=y_1z_1(y_1-z_1)$, $w_{(2,1)}^{(1)}$.

{\rm (iv)} For $w_{(3)}=\gamma_1w_{(3)}^{(2)}+\gamma_2w_{(3)}^{(1)}$, $\gamma_1+\gamma_2\not=0$: $w_{(4)}^{(3)},w_{(4)}^{(2)}$, $w_{(3,1)}^{(2)}$.

{\rm (v)} For $w_{(3)}=w_{(3)}^{(2)}-w_{(3)}^{(1)}$: $w_{(4)}^{(3)}-w_{(4)}^{(2)}$.

{\rm (vi)} For $w_{(2,1)}^{(1)}$: $w_{(4)}^{(3)}-w_{(4)}^{(2)}$.

{\rm (vii)} For $w_{(4)}=\gamma_1w_{(4)}^{(2)}+\gamma_2w_{(4)}^{(1)}$, $\gamma_1+\gamma_2\not=0$: $w_{(5)}^{(4)}$.

{\rm (viii)} For $w_{(4)}=w_{(4)}^{(2)}-w_{(4)}^{(1)}$: There are no consequences of degree $5$.

{\rm (ix)} For $w_{(n)}^{(n-1)}$, $n\geq 5$: $w_{(n+1)}^{(n)}$.
\end{theorem}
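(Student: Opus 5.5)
The plan is to work entirely inside $G_d({\mathfrak V}_{\beta})$, using the description of its $GL_d(K)$-structure from Proposition \ref{chi for general beta}. In degree $3$ the highest weight vectors are $w_{(3)}^{(2)},w_{(3)}^{(1)}$ and $w_{(2,1)}^{(1)}$, since $w_{(2,1)}^{(0)}$ is proportional to $w_{(2,1)}^{(1)}$. In degree $4$ they are $w_{(4)}^{(3)},w_{(4)}^{(2)}$, with $w_{(4)}^{(1)}$ expressed through them by $t^{(4)}_{(1)}=0$. For $n\geq5$ there is only $w_{(n)}^{(n-1)}$, and all $w_{(n)}^{(i)}$ are equal to it.

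For each highest weight vector $w_\lambda$ of degree $n$ we produce its degree $n+1$ consequences with Proposition \ref{consequences of degree n+1}:
\begin{itemize}
\item left multiplication by $y_1$ and right multiplication by $z_1$;
\item the combinations $\Delta_{12}(w)z_1-c\,wz_2$ and their left analogues, which land in $(n,1)$;
\item the substitutions $\Delta_i(y_1z_1,\cdot)$, $\Delta_i(y_1z_2\pm y_2z_1,\cdot)$ from $W_d(2)$ and $W_d(1^2)$.
\end{itemize}
We then reduce each result modulo the relations of $G_d({\mathfrak V}_{\beta})$ already known. For the lower bounds (that a claimed consequence really does not follow) we use two tools. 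One is the one-dimensional algebra $A$ with $a^2=a$, which lies in ${\mathfrak V}_\beta$. In $A$ every $w_{(n)}^{(i)}$ maps to $a$ and every $w_\lambda$ with $\lambda_2>0$ vanishes. So an identity that vanishes on $A$ (for example $w_{(3)}^{(2)}-w_{(3)}^{(1)}$, $w_{(2,1)}^{(1)}$, or $w_{(4)}^{(2)}-w_{(4)}^{(1)}$) cannot imply $w_{(m)}^{(m-1)}$. The other tool is linear independence counting in the relatively free algebra.

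Items (i) and (ii) are computed as in Theorem \ref{lattice case 1}. Left and right multiplication of $y_1z_1$ give $w_{(3)}^{(2)}$ and $w_{(3)}^{(1)}$, and $y_1\Delta_{12}(y_1z_1)-2y_2y_1z_1$ gives $w_{(2,1)}^{(1)}$. For (iii), the substitution $w_{(1^2)}^{(0)}(y_1,y_1z_1,z_1,y_1z_1)=w_{(3)}^{(2)}-w_{(3)}^{(1)}$ and the product $y_1w_{(1^2)}^{(0)}=w_{(2,1)}^{(1)}$ give the listed elements. The vanishing on $A$ shows that $w_{(3)}^{(2)}$ itself is not obtained.

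For (iv) and (v) we multiply $\gamma_1w_{(3)}^{(2)}+\gamma_2w_{(3)}^{(1)}$ by $y_1$ and by $z_1$ and apply $\Delta_1(y_1z_1,\cdot)$. Together with $t^{(4)}_{(1)}$ this gives a linear system in $w_{(4)}^{(3)},w_{(4)}^{(2)},w_{(4)}^{(1)}$. We expect its rank to be full when $\gamma_1+\gamma_2\neq0$ and to have a one-dimensional solution space spanned by $w_{(4)}^{(3)}-w_{(4)}^{(2)}$ when $\gamma_1=-\gamma_2$. The $(3,1)$ part comes from $\Delta_{12}(w)z_1-3wz_2$ and its left analogue. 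Since only $w_{(3,1)}^{(2)}$ survives in the quotient, it suffices to check that this component is nonzero when $\gamma_1+\gamma_2\neq0$ and zero when $\gamma_1=-\gamma_2$. For (vi) we substitute $y_1z_1$ into $w_{(2,1)}^{(1)}$ and get $w_{(4)}^{(3)}-w_{(4)}^{(2)}$; the products with $y_1,z_1$ lie in $(3,1)$, which is already zero.

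Items (vii)–(ix) follow from the relations $w_{(5)}^{(i)}=w_{(5)}^{(1)}$. Multiplying $\gamma_1w_{(4)}^{(2)}+\gamma_2w_{(4)}^{(1)}$ by $z_1$ gives $(\gamma_1+\gamma_2)w_{(5)}^{(4)}$, which proves (vii). For (viii), when $\gamma_1=-\gamma_2$ every consequence of degree $5$ is a difference of two $w_{(5)}^{(i)}$, hence zero. For (ix), multiplying $w_{(n)}^{(n-1)}$ by $y_1$ gives $w_{(n+1)}^{(n)}$.

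The main obstacle is (iv) and (viii), together with the exact form of the degree-$4$ relation. We need the precise rank of the systems obtained after reducing modulo $t^{(4)}_{(1)}$, with care about which substitutions are genuinely independent. I would first write everything in the basis $w_{(4)}^{(3)},w_{(4)}^{(2)}$ and check the determinant, which should be proportional to $(\gamma_1+\gamma_2)\beta_1\beta_2(\beta_1+\beta_2)$ and therefore nonzero in Case 1.
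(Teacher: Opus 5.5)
Your plan matches the paper's proof for (i)--(iii), (vi)--(ix) and for the $(4)$-components of (iv)--(v). You use the same system built from $y_1w_{(3)}$, $w_{(3)}z_1$, $\Delta_1(y_1z_1,w_{(3)})$ and $t^{(4)}_{(1)}$, the same substitution $x_2\mapsto x_1x_1$ in (vi), and the same algebra $A$ and one-sided multiplications in (vii)--(ix).

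The step that would fail is your treatment of the entry $w^{(2)}_{(3,1)}$ in (iv). You assert that only $w^{(2)}_{(3,1)}$ survives in $G_d({\mathfrak V}_{\beta})$. You then propose to check that the $(3,1)$-component of the consequences is nonzero exactly when $\gamma_1+\gamma_2\neq0$. But in Case~1 nothing of type $(3,1)$ survives:
\begin{itemize}
\item by Lemma~\ref{consequences of M(2,1) in G}, $t^{(4)}_{(3,1)}=(\beta_1+\beta_2)w^{(1)}_{(3,1)}$ gives $w^{(1)}_{(3,1)}=0$;
\item then $t^{(1)}_{(3,1)}$ and $t^{(2)}_{(3,1)}$, with $\beta_2\neq0$ and $\beta_1\neq0$, give $w^{(0)}_{(3,1)}=w^{(2)}_{(3,1)}=0$.
\end{itemize}
This is Proposition~\ref{chi4 for (2,1)}, and your own first paragraph and your argument for (vi) already rely on it. So your check would return $0$ for every $\gamma$, contrary to what you expect. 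The entry $w^{(2)}_{(3,1)}$ in (iv) is a consequence only in the vacuous sense that it is already zero. That is also why no $(3,1)$ term appears in (v). The paper's proof has the same slip: it opens with $P_4({\mathfrak V}_{\beta})=2M(4)+M(3,1)$, which contradicts Proposition~\ref{chi4 for (2,1)}, and it never discusses this entry.

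A smaller point concerns your predicted determinant. After eliminating $w^{(1)}_{(4)}$ you have three vectors in a $2$-dimensional space. Each $2\times2$ minor is $(\gamma_1+\gamma_2)$ times a linear form in $\gamma$. For the two multiplications this form is proportional to $\gamma_1\beta_2-\gamma_2\beta_1$, which vanishes when $(\gamma_1,\gamma_2)$ is proportional to $(\beta_1,\beta_2)$. So no single determinant of the shape $(\gamma_1+\gamma_2)\beta_1\beta_2(\beta_1+\beta_2)$ does the job.

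A cleaner route starts from the identity $\Delta_1(y_1z_1,w_{(3)})-y_1w_{(3)}-w_{(3)}z_1=(\gamma_1+\gamma_2)w^{(2)}_{(4)}$, which holds already in $G_d$. Then $y_1w_{(3)}$, $w_{(3)}z_1$ and $t^{(4)}_{(1)}$, together with $\beta_1\beta_2\neq0$, kill $w^{(3)}_{(4)}$ and $w^{(1)}_{(4)}$. For $w_{(3)}=w^{(2)}_{(3)}-w^{(1)}_{(3)}$ one has $t^{(4)}_{(1)}=\beta_1y_1w_{(3)}+\beta_2w_{(3)}z_1$ and $\Delta_1(y_1z_1,w_{(3)})=y_1w_{(3)}+w_{(3)}z_1$. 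Hence only multiples of $w^{(3)}_{(4)}-w^{(2)}_{(4)}$ remain, which gives (v).
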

\vskip-1.5truecm
\[
\begin{picture}(80,160)
\put(0,0){\circle*{3}} \put(0,20){\circle*{3}}
\put(0,40){\circle*{3}} \put(0,60){\circle*{3}}
\put(0,80){\circle*{3}} \put(0,100){\circle*{3}}
\put(0,105){\circle*{1}} \put(0,110){\circle*{1}}
\put(0,115){\circle*{1}}

\put(-20,40){\circle*{3}} \put(-20,60){\circle*{3}}
\put(20,40){\circle*{3}} \put(20,60){\circle*{3}}
\put(40,40){\circle*{3}} \put(40,60){\circle*{3}}

\put(0,0){\vector(0,2){20}}\put(-12,-3){\tiny{$f_1$}}
\put(0,20){\vector(0,2){20}}\put(-12,17){\tiny{$f_2$}}
\put(0,40){\vector(0,2){20}}\put(-10,34){\tiny{$f_3$}}
\put(0,60){\vector(0,2){20}}\put(-10,62){\tiny{$f_4$}}
\put(0,80){\vector(0,2){20}}\put(-12,77){\tiny{$f_5$}}
\put(-12,97){\tiny{$f_6$}}

\put(60,20){\circle*{3}} \put(60,40){\circle*{3}}

\put(0,0){\vector(3,1){60}}
\put(0,20){\vector(-1,1){20}} \put(0,20){\vector(1,1){20}} \put(0,20){\vector(2,1){40}} \put(0,20){\vector(3,1){60}}

\put(60,20){\vector(-2,1){40}} \put(60,20){\vector(0,1){20}}

\put(60,40){\vector(-2,1){40}}

\put(-20,40){\line(60,0){60}} \put(-20,60){\line(60,0){60}}

\put(-20,40){\vector(0,1){20}} \put(20,40){\vector(0,1){20}} \put(40,40){\vector(0,1){20}}

\put(-20,40){\vector(3,2){30}} \put(0,40){\vector(-1,1){20}} \put(0,40){\vector(1,2){10}}
\put(0,40){\vector(2,1){40}} \put(40,40){\vector(-3,2){30}}

\put(-20,60){\vector(1,1){20}} \put(40,60){\vector(-2,1){40}}

\put(-30,34){\tiny{$j_3$}} \put(-30,62){\tiny{$j_4$}}

\put(65,18){\tiny{$h_2$}}

\put(20,34){\tiny{$i_3$}} \put(20,62){\tiny{$i_4$}}

\put(42,34){\tiny{$g_3$}} \put(42,62){\tiny{$g_4$}}
\put(65,34){\tiny{$h_3$}}

\put(-100,-20){\tiny{$f_n=\omega_{(n)}^{(n-1)}$, $g_n=\omega_{(n)}^{(n-2)}$}, $h_n=\omega_{(n-1,1)}^{(n-2)}$, $i_n=\omega_{(n)}^{(n-1)}-\omega_{(n)}^{(n-2)}$,}

\put(-50,-35){\tiny{$j_n=\gamma_1 \omega_{(n)}^{(n-1)}+\gamma_2 \omega_{(n)}^{(n-2)}$, $\gamma_1+\gamma_2 \neq 0$}}

\put(-10,-50){\tiny{$\lambda=(2,1)$, Case 1}}
\end{picture} \ \ \ \ \ \ \
\]

\vskip2truecm

\begin{proof}
By Proposition \ref{chi4 for (2,1)}
\[
P_4({\mathfrak V}_{\beta})=2M(4)+M(3,1),\;P_n({\mathfrak V}_{\beta})=M(n),\,n>4.
\]
The considerations for the cases (i), (ii), (iii) are similar those for ${\mathfrak U}{\alpha}$.
Since $w_{(n+1)}^{(n)}(y_1,z_1)=y_1w_{(n)}^{(n-1)}(y_1,z_1)$ we obtain that $w_{(n+1)}^{(n)}(y_1,z_1)$ is a consequence of $w_{(n)}^{(n-1)}(y_1,z_1)$ in $G_d({\mathfrak V}_{\beta})$ and this is the case (ix).

Cases (iv) and (v). By Lemma \ref{consequences of M(2,1) in G} we have the relation
\[
t^{(4)}_{(1)}=\beta_1w_{(4)}^{(3)}-(\beta_1-\beta_2)w_{(4)}^{(2)}-\beta_2w_{(4)}^{(1)}=0.
\]
The consequences of the form $w_{(4)}$ of $w_{(3)}=\gamma_1w_{(3)}^{(2)}+\gamma_2w_{(3)}^{(1)}=0$ by multiplications from both sides and substitutions are
\[
\gamma_1w_{(4)}^{(3)}+\gamma_2w_{(4)}^{(2)}=0,\;\gamma_1w_{(4)}^{(2)}+\gamma_2w_{(4)}^{(1)}=0,\;
\gamma_1w_{(4)}^{(3)}+2(\gamma_1+\gamma_2)w_{(4)}^{(2)}\gamma_2w_{(4)}^{(1)}=0.
\]
The matrix of the linear system of 4 equations is
\[
\left(\begin{matrix}
\beta_1&-(\beta_1-\beta_2)&-\beta_2\\
\gamma_1&\gamma_2&0\\
0&\gamma_1&\gamma_2\\
\gamma_1&2(\gamma_1+\gamma_2)&\gamma_2
\end{matrix}\right).
\]
The rank of the matrix is equal to 3 if $\gamma_1+\gamma_2\not=0$ and to 2 if $\gamma_1+\gamma_2=0$.
In the former case both $w_{(4)}^{(3)}=0$ and $w_{(4)}^{(2)}=0$ are consequences of $w_{(3)}=0$.
In the latter case the only consequence is $w_{(4)}^{(3)}-w_{(4)}^{(2)}=0$.

The cases (vii) and (viii) are similar to the cases (iv) and (v).

The case (vi) follows immediately because the only consequence $w_{(4)}=0$ of $w_{(2,1)}^{(1)}=0$ is
\[
w_{(2,1)}^{(1)}(y_1,y_1z_1,z_1)=y_1^2z_1(y_1-z_1)=w_{(4)}^{(3)}-w_{(4)}^{(2)}=0.
\]
\end{proof}

{\bf 2. Case $\bm{\beta_2=0}$.} Then $w_{(2,1)}^{(1)}=y_1(y_1z_2-y_2z_1)=0$ in $G_d({\mathfrak V}_{\beta})$.

\begin{theorem}\label{lattice (2,1) case 2}
When $\beta_2=0$ the consequences in $G_d({\mathfrak V}_{\beta})$ of degree $n+1$ of the polynomial $w_{\lambda}^{(n)}$, $\lambda\vdash n$,
are equivalent to the following polynomials:

{\rm (i)} For $w_{(1)}=x_1$: $w_{(2)}^{(1)}=y_1z_1$, $w_{(1^2)}^{(0)}=y_1z_2-y_2z_1$;

{\rm (ii)} For $w_{(2)}^{(1)}=y_1z_1$: $w_{(3)}^{(2)}=y_1^2z_1$, $w_{(3)}^{(1)}=y_1z_1^2$, $w_{(2,1)}^{(0)}$;

{\rm (iii)} For $w_{(1^2)}^{(0)}=y_1z_2-y_2z_1$: $w_{(3)}^{(2)}-w_{(3)}^{(1)}=y_1z_1(y_1-z_1)$, $w_{(2,1)}^{(0)}$.

{\rm (iv)} For $w_{(n)}=\gamma_1w_{(n)}^{(2)}+\gamma_2w_{(n)}^{(1)}$, $n\geq 3$, $(\gamma_1+\gamma_2)\gamma_2\not=0$: $w_{(n+1)}^{(2)},w_{(n+1)}^{(1)}$, $w_{(n,1)}^{(0)}$.

{\rm (v)} For $w_{(n)}=w_{(n)}^{(2)}-w_{(n)}^{(1)}$, $n\geq 3$: $w_{(n+1)}^{(2)}-w_{(n+1)}^{(1)}$, $w_{(n,1)}^{(0)}$.

{\rm (vi)} For $w_{(n)}=w_{(n)}^{(2)}$, $n\geq 3$: $w_{(n+1)}^{(2)}$.

{\rm (vii)} For $w_{(n-1,1)}^{(0)}$, $n\geq 3$: $w_{(n+1)}^{(2)}-w_{(n+1)}^{(1)}$, $w_{(n,1)}^{(0)}$.
\end{theorem}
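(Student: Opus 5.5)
The plan is to work entirely inside $G_d({\mathfrak V}_{\beta})$ with $\beta_2=0$. By Proposition \ref{chi for beta2=0}, for $n\geq 3$ the only surviving highest weight vectors are $w_{(n)}^{(2)}$, $w_{(n)}^{(1)}$ and $w_{(n-1,1)}^{(0)}$. Moreover, $w_{(n)}^{(k)}=w_{(n)}^{(2)}$ for $k\geq 2$ and $w_{(n-1,1)}^{(k)}=0$ for $k\geq 1$. So every highest weight vector of degree $n$ is a combination $\gamma_1w_{(n)}^{(2)}+\gamma_2w_{(n)}^{(1)}$, or is $w_{(n-1,1)}^{(0)}$. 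For each such generator I will list its consequences of degree $n+1$ of type $(n+1)$ and $(n,1)$. These come from three sources, following Proposition \ref{consequences of degree n+1}: left multiplication by $y_1$, right multiplication by $z_1$ (together with the $\Delta_{12}$-corrected versions producing $(n,1)$ vectors), and the substitutions $\Delta_1(y_1z_1,\cdot)$ and $\Delta_1(y_1z_2\pm y_2z_1,\cdot)$. Each result is then reduced using the relations above.

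Cases (i)--(iii) are treated exactly as in Theorem \ref{lattice (2,1) case 1}. The only change is that $w_{(2,1)}^{(1)}=0$, so the surviving $(2,1)$ consequence is $w_{(2,1)}^{(0)}=w_{(1^2)}^{(0)}z_1$.

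For $f=\gamma_1w_{(n)}^{(2)}+\gamma_2w_{(n)}^{(1)}$ I will compute the three consequences of type $(n+1)$.
\begin{itemize}
\item $y_1f=(\gamma_1+\gamma_2)w_{(n+1)}^{(2)}$, using $w^{(3)}=w^{(2)}$.
\item $fz_1=\gamma_1w_{(n+1)}^{(2)}+\gamma_2w_{(n+1)}^{(1)}$.
\item $\Delta_1(y_1z_1,f)$, which is computed similarly.
\end{itemize}
The rank of the coefficient matrix then separates three cases. If $(\gamma_1+\gamma_2)\gamma_2\neq 0$, both $w_{(n+1)}^{(2)}$ and $w_{(n+1)}^{(1)}$ follow. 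If $\gamma_1+\gamma_2=0$, only $w_{(n+1)}^{(2)}-w_{(n+1)}^{(1)}$ follows. If $\gamma_2=0$, only $w_{(n+1)}^{(2)}$ follows. Case (vi) must be checked separately, because the $y$-degree argument is not enough there. I will instead use a witness algebra: a model of ${\mathfrak V}_\beta$ in which $w^{(1)}_{(m)}\ne0$ and $w^{(2)}_{(m)}=0$, for instance the quotient of $G_d({\mathfrak V}_\beta)$ by the ideal spanned by monomials of $y$-degree $\geq2$. Since consequences never lower the $y$-degree, this ideal is a T-ideal. The same argument shows $w^{(0)}_{(n,1)}$ does not follow from $w^{(2)}_{(n)}$.

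For the $(n,1)$ part I will compute $\Delta_{12}(f)z_1-(n)fz_2$ and reduce it modulo $w^{(k)}_{(n,1)}=0$ for $k\geq1$. I expect it to equal a nonzero multiple of $\gamma_2w_{(n,1)}^{(0)}$, because only the $y$-linear part $y_1z_1^{n-1}$ contributes. This gives $w^{(0)}_{(n,1)}$ in cases (iv) and (v) and nothing in case (vi).

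For (vii), right multiplication gives $w^{(0)}_{(n-1,1)}z_1=w^{(0)}_{(n,1)}$. The substitution $w^{(0)}_{(n-1,1)}(y_1,y_1z_1,z_1,y_1z_1)$, mirroring the computation in Proposition \ref{chi for beta1=beta2}, gives $y_1z_1^{n-1}(y_1-z_1)$ up to sign; modulo $w^{(k)}=w^{(2)}$ for $k\geq 2$ this equals $w_{(n+1)}^{(2)}-w_{(n+1)}^{(1)}$. To show nothing more follows, I will use the algebra $A$ with $a^2=a$. It satisfies $w^{(0)}_{(m,1)}=0$ and $w^{(2)}_{(m)}-w^{(1)}_{(m)}=0$, while $w^{(1)}_{(m)}\ne0$; hence neither $w^{(2)}_{(n+1)}$ nor $w^{(1)}_{(n+1)}$ separately is a consequence.

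I expect the main obstacle to be the bookkeeping of all substitution consequences for (iv)--(vi). This requires checking that the partial linearizations evaluated at products add nothing beyond the span of the multiplication consequences, using the fact from Proposition \ref{chi for beta2=0} that all consequences are divisible by $y_1^2(y_1-z_1)z_1$. It also requires finding suitable witness algebras to prove the non-implications.
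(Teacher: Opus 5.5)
Your proposal is correct and follows essentially the same route as the paper. It uses the same three $W_d(n+1)$-consequences $y_1f$, $fz_1$, $\Delta_1(y_1z_1,f)$ reduced by $w^{(3)}_{(n+1)}=w^{(2)}_{(n+1)}$, and the same $\Delta_{12}(f)z_1-nfz_2$ computation, which indeed gives $-2\gamma_1w^{(1)}_{(n,1)}-\gamma_2w^{(0)}_{(n,1)}$ and hence $-\gamma_2w^{(0)}_{(n,1)}$. For (vi), your quotient by the $y$-degree $\geq 2$ T-ideal is just the paper's $y$-degree argument in another guise, not a replacement for it. Using the algebra $A$ with $a^2=a$ in (vii) usefully makes explicit what the paper calls clear. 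Your closing bookkeeping worry is already settled by Proposition \ref{consequences of degree n+1}, which yields exactly one $(n+1)$-type consequence from substitution.
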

\vskip-2truecm
\[
\begin{picture}(80,160)
\put(0,0){\circle*{3}} \put(0,20){\circle*{3}}
\put(0,40){\circle*{3}} \put(0,60){\circle*{3}}
\put(0,80){\circle*{3}}

\put(-20,95){\circle*{1}} \put(-20,100){\circle*{1}}
\put(-20,105){\circle*{1}}

\put(0,95){\circle*{1}} \put(0,100){\circle*{1}}
\put(0,105){\circle*{1}}

\put(20,95){\circle*{1}} \put(20,100){\circle*{1}}
\put(20,105){\circle*{1}}

\put(40,95){\circle*{1}} \put(40,100){\circle*{1}}
\put(40,105){\circle*{1}}

\put(60,95){\circle*{1}} \put(60,100){\circle*{1}}
\put(60,105){\circle*{1}}

\put(-20,40){\circle*{3}} \put(-20,60){\circle*{3}} \put(-20,80){\circle*{3}}
\put(20,40){\circle*{3}} \put(20,60){\circle*{3}} \put(20,80){\circle*{3}}
\put(40,40){\circle*{3}} \put(40,60){\circle*{3}} \put(40,80){\circle*{3}}

\put(0,0){\vector(0,2){20}}\put(-12,-3){\tiny{$f_1$}}
\put(0,20){\vector(0,2){20}}\put(-12,17){\tiny{$f_2$}}
\put(0,40){\vector(0,2){20}}\put(-10,34){\tiny{$f_3$}}
\put(0,60){\vector(0,2){20}}\put(-10,62){\tiny{$f_4$}}
\put(0,80){\line(0,1){10}}\put(-10,82){\tiny{$f_5$}}
\put(-20,80){\line(0,1){10}} \put(20,80){\line(0,1){10}} \put(40,80){\line(0,1){10}} \put(60,80){\line(0,1){10}}

\put(60,20){\circle*{3}} \put(60,40){\circle*{3}}
\put(60,60){\circle*{3}} \put(60,80){\circle*{3}}

\put(0,0){\vector(3,1){60}}
\put(0,20){\vector(-1,1){20}} \put(0,20){\vector(1,1){20}} \put(0,20){\vector(2,1){40}} \put(0,20){\vector(3,1){60}}

\put(60,20){\vector(-2,1){40}} \put(60,20){\vector(0,1){20}} \put(60,40){\vector(0,1){20}} \put(60,60){\vector(0,1){20}} \put(60,60){\vector(-2,1){40}}

\put(60,40){\vector(-2,1){40}}

\put(-20,40){\line(60,0){60}} \put(-20,60){\line(60,0){60}} \put(-20,80){\line(60,0){60}}

\put(-20,40){\vector(0,1){20}} \put(-20,60){\vector(0,1){20}} \put(20,40){\vector(0,1){20}} \put(20,60){\vector(0,1){20}} \put(40,40){\vector(0,1){20}} \put(40,60){\vector(0,1){20}}

\put(-20,40){\vector(3,1){60}} \put(-20,40){\vector(4,1){80}}

\put(20,40){\vector(2,1){40}}
\put(40,40){\vector(1,1){20}} \put(40,40){\vector(-3,1){60}} \put(40,40){\vector(-2,1){40}}

\put(20,60){\vector(2,1){40}}
\put(40,60){\vector(1,1){20}} \put(40,60){\vector(-3,1){60}} \put(40,60){\vector(-2,1){40}}

\put(-20,60){\vector(3,1){60}} \put(-20,60){\vector(4,1){80}}

\put(-30,34){\tiny{$j_3$}} \put(-30,62){\tiny{$j_4$}} \put(-30,82){\tiny{$j_5$}}

\put(65,18){\tiny{$h_2$}} \put(65,34){\tiny{$h_3$}} \put(65,62){\tiny{$h_4$}} \put(65,82){\tiny{$h_5$}}

\put(10,34){\tiny{$i_3$}} \put(10,62){\tiny{$i_4$}} \put(10,82){\tiny{$i_5$}}

\put(42,34){\tiny{$g_3$}} \put(42,62){\tiny{$g_4$}} \put(42,82){\tiny{$g_5$}}

\put(-110,-20){\tiny{$f_2=\omega_{(2)}^{(1)}$, $f_n=\omega_{(n)}^{(2)}$, $n\geq 3$, $g_n=\omega_{(n)}^{(1)}$}, $h_n=\omega_{(n-1,1)}^{(0)}$, $i_n=\omega_{(n)}^{(2)}-\omega_{(n)}^{(1)}$,}

\put(-50,-35){\tiny{$j_n=\gamma_1 \omega_{(n)}^{(2)}+\gamma_2 \omega_{(n)}^{(1)}$, $(\gamma_1+\gamma_2)\gamma_2 \neq 0$}}

\put(-10,-50){\tiny{$\lambda=(2,1)$, Case 2}}
\end{picture}
\]

\vskip2truecm

\begin{proof}
The cases (i), (ii) and (iii) are as in the previous considerations, (vii) is also clear because
\[
w_{(n-1,1)}^{(0)}(y_1,y_1z_1,z_1,y_1z_1)=w_{(n+1)}^{(2)}-w_{(n+1)}^{(1)},\;w_{(n,1)}^{(0)}=w_{(n-1,1)}^{(0)}z_1.
\]

(iv) The consequences $w_{(n+1)}=0$ of $w_{(n)}=\gamma_1w_{(n)}^{(2)}+\gamma_2w_{(n)}^{(1)}=0$ are
\[
y_1w_{(n)}=y_1(\gamma_1w_{(n)}^{(2)}+\gamma_2w_{(n)}^{(1)})=\gamma_1w_{(n+1)}^{(3)}+\gamma_2w_{(n+1)}^{(2)}=0,
\]
\[
w_{(n)}z_1=(\gamma_1w_{(n)}^{(2)}+\gamma_2w_{(n)}^{(1)})z_1=\gamma_1w_{(n+1)}^{(2)}+\gamma_2w_{(n+1)}^{(1)}=0,
\]
\[
\Delta_1(y_1z_1,w_{(n)})=(n-2)\gamma_1w_{(n+1)}^{(3)}+(2\gamma_1+(n-1)\gamma_2)w_{(n+1)}^{(2)}+\gamma_2w_{(n+1)}^{(1)}=0.
\]
Together with the equality
\[
\beta_1w_{(n+1)}^{(3)}-(\beta_1-\beta_2)w_{(n+1)}^{(2)}-\beta_2w_{(n+1)}^{(1)}=0
\]
which due to $\beta_2=0$ becomes $w_{(n+1)}^{(3)}=w_{(n+1)}^{(2)}$, we obtain
a linear homogeneous system with a matrix
\begin{equation}\label{matrix for (2,1)}
\left(\begin{matrix}\gamma_1&\gamma_2&0\\
0&\gamma_1&\gamma_2\\
(n-2)\gamma_1&2\gamma_1+(n-1)\gamma_2&\gamma_2\\
1&-1&0
\end{matrix}\right).
\end{equation}
The system has the trivial solution only, i.e.
$w_{(n+1)}^{(2)}=w_{(n+1)}^{(1)}=0$,
and both $w_{(n+1)}^{(2)}$ and $w_{(n+1)}^{(1)}$ are consequences of $w_{(n)}$.

Now $0=nw_{(n)}(y_1,z_1)z_2-\Delta_{12}(y_1,z_1)z_1=2(\gamma_1y_1+\gamma_2z_1)z_1^{n-2}(y_1z_2-y_2z_1)$
\[
=2(\gamma_1w_{(n,1)}^{(1)}+\gamma_2w_{(n,1)}^{(0)}=\gamma_2w_{(n,1)}^{(0)}
\]
and $w_{(n,1)}^{(0)}=0$ because $\gamma_2\not=0$.

(v) Since $\gamma_2=-\gamma_1$ the only nonzero solution of the system with matrix (\ref{matrix for (2,1)}) is
$w_{(n+1)}^{(2)}=w_{(n+1)}^{(1)}$. As in the case (iv), $\gamma_2\not=0$ and we have that $w_{(n,1)}^{(0)}=0$.

(vi) Since $\gamma_2=0$ the system with matrix (\ref{matrix for (2,1)}) has the solution $w_{(n+1)}^{(2)}=0$.
The degree in $y_1,y_2$ in both equalities
\[
w_{(2,1)}^{(1)}=y_1(y_1z_2-y_2z_1)=0,\;w_{(n)}=w_{(n)}^{(2)}=y_1^2z_1^{n-2}=0
\]
is equal to 2 and hence $\deg_y(w)\geq 2$ for all their consequences $w=0$. Since $\deg_y(w_{(n,1)}^{(0)})=1$, it does not follow from
$w_{(2,1)}^{(1)}=w_{(n)}^{(2)}=0$.
\end{proof}

{\bf 3. Case $\bm{\beta_1=0}$.} This case is similar to the previous case $\beta_2=0$.

{\bf 4. Case $\bm{\beta_1+\beta_2=0}$.}
The variety of bicommutative algebras ${\mathfrak V}_{\beta}$ is defined by the identity
$x_1[x_1,x_2]=[x_1,x_2]x_1$ which in $G_d({\mathfrak V}_{\beta})$ becomes
\[
w_{(2,1)}(y_1,y_2,z_1,z_2)=w_{(2,1)}^{(1)}-w_{(2,1)}^{(0)}=(y_1-z_1)(y_1z_2-y_2z_1)=0.
\]
\begin{theorem}\label{lattice (2,1) case 4}
When $\beta_1+\beta_2=0$ the consequences in $G_d({\mathfrak V}_{\beta})$ of degree $n+1$ of the polynomial $w_{\lambda}^{(n)}$, $\lambda\vdash n$,
are equivalent to the following polynomials:

{\rm (i)} For $w_{(1)}=x_1$: $w_{(2)}^{(1)}=y_1z_1$, $w_{(1^2)}^{(0)}=y_1z_2-y_2z_1$;

{\rm (ii)} For $w_{(2)}^{(1)}=y_1z_1$: $w_{(3)}^{(2)}=y_1^2z_1$, $w_{(3)}^{(1)}=y_1z_1^2$, $w_{(2,1)}^{(1)}$;

{\rm (iii)} For $w_{(n)}=\gamma_1w_{(n)}^{(n-1)}+\gamma_2w_{(n)}^{(n-2)}$, $n\geq 3$, $(\gamma_1+\gamma_2)\not=0$: $w_{(n+1)}^{(n)},w_{(n+1)}^{(n-1)}$, $w_{(n,1)}^{(n-1)}$.

{\rm (iv)} For $w_{(n)}=w_{(n)}^{(n-1)}-w_{(n)}^{(n-2)}$, $n\geq 3$: $w_{(n+1)}^{(n)}-w_{(n+1)}^{(n-1)}$, $w_{(n,1)}^{(n-1)}$.

{\rm (v)} For $w_{(n-1,1)}^{(n-2)}$, $n\geq 3$: $w_{(n+1)}^{(n-1)}-w_{(n+1)}^{(n-2)}$, $w_{(n,1)}^{(n-1)}$.
\end{theorem}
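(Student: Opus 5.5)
The plan follows the template of Theorems \ref{lattice (2,1) case 1} and \ref{lattice (2,1) case 2}, now using the structure of $G_d({\mathfrak V}_{\beta})$ from Proposition \ref{chi for beta1=beta2} for $\beta_1+\beta_2=0$. There $P_n({\mathfrak V}_{\beta})=2M(n)+M(n-1,1)$, the two copies of $W_d(n)$ are generated by $w_{(n)}^{(n-1)},w_{(n)}^{(n-2)}$, and $W_d(n-1,1)$ is generated by $w_{(n-1,1)}^{(n-2)}$. Throughout we may use the relations
\[
y_1(y_1z_2-y_2z_1)=(y_1z_2-y_2z_1)z_1,\qquad (y_1-z_1)^2y_1z_1=0 .
\]
The second relation says $w_{(n)}^{(k+3)}-2w_{(n)}^{(k+2)}+w_{(n)}^{(k+1)}=0$, so every $w_{(n)}^{(k)}$ is a combination of the two top ones.

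Cases (i) and (ii) are verbatim as before. They come from multiplications $y_1w_{(2)}^{(1)}$ and $w_{(2)}^{(1)}z_1$, the combination $y_1\Delta_{12}(w_{(2)}^{(1)})-2y_2w_{(2)}^{(1)}$ giving $w_{(2,1)}^{(1)}$, and the fact that by Proposition \ref{chi for beta1=beta2} $w_{(2,1)}^{(1)}=w_{(2,1)}^{(0)}$ is the unique hwv of degree $(2,1)$.

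For (iii) and (iv), let $w_{(n)}=\gamma_1w_{(n)}^{(n-1)}+\gamma_2w_{(n)}^{(n-2)}$. I will list its degree $n+1$ consequences of type $(n+1)$: the left product $y_1w_{(n)}$, the right product $w_{(n)}z_1$, and the substitution $\Delta_1(y_1z_1,w_{(n)})$. Each is rewritten, using the relation $(y_1-z_1)^2y_1z_1=0$, in the basis $w_{(n+1)}^{(n)},w_{(n+1)}^{(n-1)}$. Equivalently, I compute modulo the ideal generated by $(y_1-z_1)^2$ in the quotient, i.e.\ set $z_1=y_1+\varepsilon$ with $\varepsilon^2=0$. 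Then $w_{(n)}$ becomes $y_1^{n-1}(\gamma_1+\gamma_2)+y_1^{n-2}\varepsilon(\cdots)$, and the three consequences span a subspace of the two-dimensional space. I expect the rank to be $2$ if $\gamma_1+\gamma_2\neq0$ and $1$ if $\gamma_1+\gamma_2=0$, with $w_{(n)}^{(n-1)}-w_{(n)}^{(n-2)}=y_1^{n-2}z_1(y_1-z_1)$ generating only $w_{(n+1)}^{(n)}-w_{(n+1)}^{(n-1)}$. That smaller span is obtained by left multiplication, and the other consequences are divisible by $y_1z_1(y_1-z_1)$, hence proportional to it modulo $(y_1-z_1)^2y_1z_1$.

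For the $(n,1)$ component I use $nw_{(n)}z_2-\Delta_{12}(w_{(n)})z_1$, as in Theorem \ref{lattice (2,1) case 2}(iv). This gives a multiple of $(\gamma_1y_1^{n-1}+\dots)(y_1z_2-y_2z_1)$, which via the first relation equals a nonzero scalar times $w_{(n,1)}^{(n-1)}$. I have to check that the scalar is nonzero in both cases, which I expect holds since every $w_{(n,1)}^{(k)}$ equals $w_{(n,1)}^{(n-1)}$. Finally, no further consequences exist: the remaining $\lambda$'s vanish by Corollary \ref{bounds for cocharacters}, and substitutions in a linearization reduce to these by Proposition \ref{consequences of degree n+1}.

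For (v), the hwv $w_{(n-1,1)}^{(n-2)}$ gives $w_{(n,1)}^{(n-1)}$ by left multiplication by $y_1$. It gives the $(n+1)$ part by substituting $y_2,z_2\mapsto y_1z_1$, which yields $y_1^{n-1}z_1(y_1-z_1)$ up to the index convention; this should be matched with the stated $w_{(n+1)}^{(n-1)}-w_{(n+1)}^{(n-2)}$, again modulo the relations. To see nothing more follows, I will use the one-dimensional algebra $A$ with $a^2=a$. It lies in ${\mathfrak V}_{\beta}$, kills all $w_{(\lambda_1,\lambda_2)}$ with $\lambda_2>0$, and does not kill $w_{(n+1)}^{(n)}$. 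Hence the span of consequences of $w_{(n-1,1)}^{(n-2)}$ in $W_d(n+1)$ is proper, and therefore one-dimensional. The main obstacle is the rank computation in (iii)/(iv), specifically verifying that $\Delta_1(y_1z_1,w_{(n)})$ contributes independently exactly when $\gamma_1+\gamma_2\neq0$. The $\varepsilon$-expansion is the tool I would try first.
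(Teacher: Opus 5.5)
The step that fails is the $W_d(n,1)$-part of (iii). You expect the scalar to be nonzero in both cases, but it is not. The consequence you picked is
\[
n\,w_{(n)}z_2-\Delta_{12}(w_{(n)})z_1=(n-1)\gamma_1w_{(n,1)}^{(n-2)}+(n-2)\gamma_2w_{(n,1)}^{(n-3)}.
\]
In $G_d({\mathfrak V}_{\beta})$ this collapses to $\bigl((n-1)\gamma_1+(n-2)\gamma_2\bigr)w_{(n,1)}^{(n-1)}$. The scalar is zero for $(\gamma_1,\gamma_2)=(n-2,-(n-1))$, even though $\gamma_1+\gamma_2=-1\neq0$. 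For instance, for $n=3$ and $w_{(3)}=y_1^2z_1-2y_1z_1^2$ you get $2w_{(3,1)}^{(1)}-2w_{(3,1)}^{(0)}=0$.

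The equality of all $w_{(n,1)}^{(k)}$ does not rescue this. It only turns your consequence into a linear form in $(\gamma_1,\gamma_2)$, and that form has a kernel inside the region $\gamma_1+\gamma_2\neq0$. In (iv) the scalar is $(n-1)-(n-2)=1$, and there your argument is exactly the paper's.

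For (iii) the paper instead substitutes the skew element:
\[
\Delta_1(y_1z_2-y_2z_1,w_{(n)})=\gamma_1w_{(n,1)}^{(n-1)}+((n-1)\gamma_1+2\gamma_2)w_{(n,1)}^{(n-2)}+(n-2)\gamma_2w_{(n,1)}^{(n-3)}.
\]
This collapses to $n(\gamma_1+\gamma_2)w_{(n,1)}^{(n-1)}$, which is nonzero exactly under the hypothesis of (iii). It vanishes in (iv), which is why the paper switches to right multiplication there.

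Alternatively, keep your route but also use the left-multiplication component $y_1\Delta_{12}(w_{(n)})-ny_2w_{(n)}=\gamma_1w_{(n,1)}^{(n-1)}+2\gamma_2w_{(n,1)}^{(n-2)}$, which collapses to $(\gamma_1+2\gamma_2)w_{(n,1)}^{(n-1)}$. Since $(n-2)-2(n-1)=-n\neq0$, the left and right scalars never vanish together for $(\gamma_1,\gamma_2)\neq(0,0)$. This handles (iii) and (iv) uniformly.

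The rest of your plan is fine and matches the paper: (i), (ii), (v), and the $W_d(n+1)$-parts. One remark on where you located the difficulty. After cancelling $y_1z_1$ and setting $z_1=y_1+\varepsilon$, one has $w_{(n+1)}^{(k)}\equiv y_1^{n-1}+(n-k)y_1^{n-2}\varepsilon$. So $y_1w_{(n)}\mapsto(\gamma_1+\gamma_2,\gamma_2)$ and $w_{(n)}z_1\mapsto(\gamma_1+\gamma_2,\gamma_1+2\gamma_2)$ already have determinant $(\gamma_1+\gamma_2)^2$. The substitution $\Delta_1(y_1z_1,w_{(n)})$ is never decisive, so it is not the real obstacle.
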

\vskip-2truecm
\[
\begin{picture}(80,160)
\put(0,0){\circle*{3}} \put(0,20){\circle*{3}}
\put(0,40){\circle*{3}} \put(0,60){\circle*{3}}
\put(0,80){\circle*{3}}

\put(-20,95){\circle*{1}} \put(-20,100){\circle*{1}}
\put(-20,105){\circle*{1}}

\put(0,95){\circle*{1}} \put(0,100){\circle*{1}}
\put(0,105){\circle*{1}}

\put(20,95){\circle*{1}} \put(20,100){\circle*{1}}
\put(20,105){\circle*{1}}

\put(40,95){\circle*{1}} \put(40,100){\circle*{1}}
\put(40,105){\circle*{1}}

\put(60,95){\circle*{1}} \put(60,100){\circle*{1}}
\put(60,105){\circle*{1}}

\put(-20,40){\circle*{3}} \put(-20,60){\circle*{3}} \put(-20,80){\circle*{3}}
\put(20,40){\circle*{3}} \put(20,60){\circle*{3}} \put(20,80){\circle*{3}}
\put(40,40){\circle*{3}} \put(40,60){\circle*{3}} \put(40,80){\circle*{3}}

\put(0,0){\vector(0,2){20}}\put(-12,-3){\tiny{$f_1$}}
\put(0,20){\vector(0,2){20}}\put(-12,17){\tiny{$f_2$}}
\put(0,40){\vector(0,2){20}}\put(-10,34){\tiny{$f_3$}}
\put(0,60){\vector(0,2){20}}\put(-10,62){\tiny{$f_4$}}
\put(0,80){\line(0,1){10}}\put(-10,82){\tiny{$f_5$}}
\put(-20,80){\line(0,1){10}} \put(20,80){\line(0,1){10}} \put(40,80){\line(0,1){10}} \put(60,80){\line(0,1){10}}

\put(60,20){\circle*{3}} \put(60,40){\circle*{3}}
\put(60,60){\circle*{3}} \put(60,80){\circle*{3}}

\put(0,0){\vector(3,1){60}}
\put(0,20){\vector(-1,1){20}} \put(0,20){\vector(1,1){20}} \put(0,20){\vector(2,1){40}} \put(0,20){\vector(3,1){60}}

\put(0,40){\vector(-1,1){20}} \put(0,40){\vector(1,1){20}} \put(0,40){\vector(2,1){40}}

\put(0,60){\vector(-1,1){20}} \put(0,60){\vector(1,1){20}} \put(0,60){\vector(2,1){40}}

\put(60,20){\vector(-2,1){40}} \put(60,20){\vector(0,1){20}} \put(60,40){\vector(0,1){20}} \put(60,60){\vector(0,1){20}}

\put(60,40){\vector(-2,1){40}}

\put(60,60){\vector(-2,1){40}}

\put(-20,40){\line(60,0){60}} \put(-20,60){\line(60,0){60}} \put(-20,80){\line(60,0){60}}

\put(-20,40){\vector(0,1){20}} \put(-20,60){\vector(0,1){20}} \put(20,40){\vector(0,1){20}} \put(20,60){\vector(0,1){20}} \put(40,40){\vector(0,1){20}} \put(40,60){\vector(0,1){20}}

\put(-20,40){\vector(3,1){60}} \put(-20,40){\vector(4,1){80}}

\put(20,40){\vector(2,1){40}}
\put(40,40){\vector(1,1){20}} \put(40,40){\vector(-3,1){60}} \put(40,40){\vector(-2,1){40}}

\put(20,60){\vector(2,1){40}}
\put(40,60){\vector(1,1){20}} \put(40,60){\vector(-3,1){60}} \put(40,60){\vector(-2,1){40}}

\put(-20,60){\vector(3,1){60}} \put(-20,60){\vector(4,1){80}}

\put(-30,34){\tiny{$j_3$}} \put(-30,62){\tiny{$j_4$}} \put(-30,82){\tiny{$j_5$}}

\put(65,18){\tiny{$h_2$}} \put(65,34){\tiny{$h_3$}} \put(65,62){\tiny{$h_4$}} \put(65,82){\tiny{$h_5$}}

\put(10,34){\tiny{$i_3$}} \put(10,62){\tiny{$i_4$}} \put(10,82){\tiny{$i_5$}}

\put(42,34){\tiny{$g_3$}} \put(42,62){\tiny{$g_4$}} \put(42,82){\tiny{$g_5$}}

\put(-110,-20){\tiny{$f_2=\omega_{(2)}^{(1)}$, $f_n=\omega_{(n)}^{(n-1)}$, $n\geq 3$, $g_n=\omega_{(n)}^{(n-2)}$}, $h_n=\omega_{(n-1,1)}^{(n-2)}$, $i_n=\omega_{(n)}^{(n-1)}-\omega_{(n)}^{(n-2)}$,}

\put(-50,-35){\tiny{$j_n=\gamma_1 \omega_{(n)}^{(n-1)}+\gamma_2 \omega_{(n)}^{(n-2)}$, $(\gamma_1+\gamma_2)\gamma_2 \neq 0$}}

\put(-10,-50){\tiny{$\lambda=(2,1)$, Case 4}}
\end{picture}
\]
\vskip2truecm

\begin{proof}
The cases (i) and (ii) are the same as in the cases $\beta_1\beta_2(\beta_1+\beta_2)(\beta_1-\beta_2)\not=0$ and $\beta_2=0$.
The identity $t^{(4)}_{(1)}=0$ from Lemma \ref{consequences of M(2,1) in G} becomes
$w_{(4)}^{(3)}-2w_{(4)}^{(2)}+w_{(4)}^{(1)}=0$.
Now the case (v) is clear:
\[
y_1w_{(n-1,1)}^{(n-2)}=w_{(n,1)}^{(n-1)}=0,\;w_{(n-1,1)}^{(n-2)}(y_1,y_1z_1,z_1,y_1z_1)=w_{(n+1)}^{(n-1)}-w_{(n+1)}^{(n-2)}=0
\]
and the latter together with the consequence $w_{(n+1)}^{(n)}-2w_{(n+1)}^{(n-1)}+w_{(n+1)}^{(n-2)}=0$ of $w_{(n+1)}^{(n)}-2w_{(n+1)}^{(n-1)}+w_{(n+1)}^{(n-2)}=0$
implies $w_{(n+1)}^{(n)}-w_{(n+1)}^{(n-1)}=0$.
The one dimensional algebra $A$ which we already used belongs to ${\mathfrak V}_{\beta}$, satisfies the identity $w_{(n-1,1)}^{(n-2)}=0$
and $\rho_1w_{(n+1)}^{(n-1)}+\rho_2w_{(n+1)}^{(n-2)}\not=0$ in $A$ if $\rho_1+\rho_2\not=0$.

(iii) Let $w_{(n)}=\gamma_1w_{(n)}^{(n-1)}+\gamma_2w_{(n)}^{(n-2)}=0$, $n\geq 3$. As in the case $\beta_2=0$, the consequences $w_{(n+1)}$ are linear combinations of
\[
\gamma_1w_{(n+1)}^{(n)}+\gamma_2w_{(n+1)}^{(n-1)}=0,
\]
\[
\gamma_1w_{(n+1)}^{(n-1)}+\gamma_2w_{(n+1)}^{(n-2)}=0,
\]
\[
\gamma_1((n-1)w_{(n+1)}^{(n-1)}+w_{(n+1)}^{(n)})+\gamma_2((n-2)w_{(n+1)}^{(n-2)}+2w_{(n+1)}^{(n-1)})=0
\]
\[
w_{(n+1)}^{(n)}-2w_{(n+1)}^{(n-1)}+w_{(n+1)}^{(n-2)}=0.
\]
When $\gamma_1+\gamma_2\not=0$ the system has only the trivial solution $w_{(n+1)}^{(n)}=w_{(n+1)}^{(n-1)}=0$.

Direct calculations show that for
\[
w_{(n)}=\gamma_1w_{(n)}^{(n-1)}+\gamma_2w_{(n)}^{(n-2)}
\]
\[
\Delta_1(y_1z_2-y_2z_1,w_{(n)}(y_1,z_1))=\gamma_1w_{(n,1)}^{(n-1)}+((n-1)\gamma_1+2\gamma_2)w_{(n,1)}^{(n-2)}+(n-2)\gamma_2w_{(n,1)}^{(n-3)}=0
\]
and since $w_{(n,1)}^{(i)}=w_{(n,1)}^{(n-1)}$, $i=0,1,\ldots,n-2$, we obtain $n(\gamma_1+\gamma_2)w_{(n,1)}^{(n-1)}=0$ and hence $w_{(n,1)}^{(n-1)}=0$.

(iv) We obtain immediately $y_1(w_{(n)}^{(n-1)}-w_{(n)}^{(n-2)})=w_{(n+1)}^{(2)}-w_{(n+1)}^{(1)}=0$ and
$\rho_1w_{(n+1)}^{(n-1)}+\rho_2w_{(n+1)}^{(n-2)}\not=0$ does not follow $w_{(n)}=0$ because does not vanish on the algebra $A$ if $\rho_1+\rho_2\not=0$.

We have
\[
0=n(w_{(n)}^{(n-1)}(y_1,z_1)-w_{(n)}^{(n-2)}(y_1,z_1))z_1-\Delta_{12}(w_{(n)}^{(n-1)}(y_1,z_1)-w_{(n)}^{(n-2)(y_1,z_1)})z_1
\]
\[
=(n-1)w_{(n,1)}^{(n-2)}-(n-2)w_{(n,1)}^{(n-3)}.
\]
Since $w_{(n,1)}^{(n-3)}=w_{(n,1)}^{(n-2)}=w_{(n,1)}^{(n-1)}$ in $G_d({\mathfrak V}_{\beta})$, we obtain that $w_{(n,1)}^{(n-1)}=0$.
\end{proof}

{\bf 5. Case $\bm{\beta_1-\beta_2=0}$.} The algebra $G_d({\mathfrak V}_{\beta})$ satisfies the identity
\[
w_{(2,1)}=w_{(2,1)}^{(1)}+w_{(2,1)}^{(0)}=(y_1+z_1)(y_1z_2-y_2z_1)=0
\]
and it follows from Proposition \ref{chi for beta1=beta2} that
\[
\chi_n({\mathfrak V}_{\beta})=\begin{cases}
\chi_{(1)},\text{ if }n=1,\\
\chi_{(2)}+\chi_{(1^2)},\text{ if }n=2,\\
2\chi_{(3)}+\chi_{(2,1)},\text{ if }n=3,\\
2\chi_{(4)},\text{ if }n=4,\\
\chi_{(n)},\text{ if }n\geq 5.\\
\end{cases}
\]
Also, in $G_d({\mathfrak V}_{\beta})$ we have
\[
w_{(2,1)}(y_1,y_1z_1,z_1,y_1z_1)=(y_1^2-z_1^2)y_1z_1=w_{(4)}^{(3)}-w_{(4)}^{(1)}=0.
\]
From the proof of Proposition \ref{chi for beta1=beta2} for degrees $n\geq 5$ we have $w_{(n)}^{(i)}-w_{(n)}^{(i-1)}=0$, $i=2,\ldots,n-1$.

\begin{theorem}\label{lattice (2,1) case 5}
When $\beta_1-\beta_2=0$ the consequences in $G_d({\mathfrak V}_{\beta})$ of degree $n+1$ of the polynomial $w_{\lambda}^{(n)}$, $\lambda\vdash n$,
are equivalent to the following polynomials:

{\rm (i)} For $w_{(1)}=x_1$: $w_{(2)}^{(1)}=y_1z_1$, $w_{(1^2)}^{(0)}=y_1z_2-y_2z_1$;

{\rm (ii)} For $w_{(2)}^{(1)}=y_1z_1$: $w_{(3)}^{(2)}=y_1^2z_1$, $w_{(3)}^{(1)}=y_1z_1^2$, $w_{(2,1)}^{(1)}$;

{\rm (iii)} For $w_{(1^2)}^{(0)}=y_1z_2-y_2z_1$: $w_{(3)}^{(2)}-w_{(3)}^{(1)}=y_1z_1(y_1-z_1)$, $w_{(2,1)}^{(1)}$.

{\rm (iv)} For $w_{(3)}=\gamma_1w_{(3)}^{(2)}+\gamma_2w_{(3)}^{(1)}$, $(\gamma_1+\gamma_2)\not=0$: $w_{(4)}^{(3)},w_{(4)}^{(2)}$.

{\rm (v)} For $w_{(3)}=w_{(3)}^{(2)}-w_{(3)}^{(1)}$: $w_{(4)}^{(3)}-w_{(4)}^{(2)}$.

{\rm (vi)} For $w_{(2,1)}=w_{(2,1)}^{(1)}$: $w_{(4)}^{(3)}-w_{(4)}^{(2)}$.

{\rm (vii)} For $w_{(4)}=\gamma_1w_{(4)}^{(3)}+\gamma_2w_{(4)}^{(2)}$, $(\gamma_1+\gamma_2)\not=0$: $w_{(5)}^{(4)}$.

{\rm (viii)} For $w_{(4)}=w_{(4)}^{(3)}-w_{(4)}^{(2)}$: There are no consequences of degree $5$.

{\rm (ix)} For $w_{(n)}^{(n-1)}$, $n\geq 5$: $w_{(n+1)}^{(n)}$.
\end{theorem}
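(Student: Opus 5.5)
The plan is to follow the proof of Theorem \ref{lattice (2,1) case 1} almost verbatim, using the structure of $G_d({\mathfrak V}_{\beta})$ for $\beta_1=\beta_2$ recorded just before the statement. In degree $3$ we have $\chi_3=2\chi_{(3)}+\chi_{(2,1)}$, with the $(2,1)$-component generated by $w_{(2,1)}^{(1)}$, since $w_{(2,1)}^{(0)}=-w_{(2,1)}^{(1)}$. In degree $4$ the only relation is $w_{(4)}^{(3)}=w_{(4)}^{(1)}$, so $w_{(4)}^{(3)},w_{(4)}^{(2)}$ form a basis of the highest weight vectors of $W_d(4)$, and $W_d(3,1)$ vanishes. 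In degree $n\geq 5$ all $w_{(n)}^{(i)}$ coincide, and they are nonzero because they do not vanish on the one-dimensional algebra $A$ with $a^2=a$.

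Cases (i)--(iii) are handled as before: multiply by $y_1$ on the left and by $z_1$ on the right, apply $\Delta_{12}$, and substitute, e.g. $w_{(1^2)}^{(0)}(y_1,y_1z_1,z_1,y_1z_1)=w_{(3)}^{(2)}-w_{(3)}^{(1)}$ and $y_1w_{(1^2)}^{(0)}=w_{(2,1)}^{(1)}$. To see that in (iii) no further degree-$3$ consequence arises, I would use the algebra $A$. It satisfies $w_{(1^2)}^{(0)}=0$, and $\rho_1w_{(3)}^{(2)}+\rho_2w_{(3)}^{(1)}$ is nonzero on $A$ whenever $\rho_1+\rho_2\not=0$.

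For (iv) and (v), I would write the three degree-$4$ consequences of $w_{(3)}=\gamma_1w_{(3)}^{(2)}+\gamma_2w_{(3)}^{(1)}$:
\[
\gamma_1w_{(4)}^{(3)}+\gamma_2w_{(4)}^{(2)},\quad \gamma_1w_{(4)}^{(2)}+\gamma_2w_{(4)}^{(1)},\quad \gamma_1w_{(4)}^{(3)}+2(\gamma_1+\gamma_2)w_{(4)}^{(2)}+\gamma_2w_{(4)}^{(1)}.
\]
I then add the relation $w_{(4)}^{(3)}-w_{(4)}^{(1)}=0$ and compute the rank of the resulting $4\times 3$ matrix. For $\gamma_1+\gamma_2\not=0$ the rank should be $3$, so $w_{(4)}^{(3)}=w_{(4)}^{(2)}=0$. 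For $\gamma_2=-\gamma_1$ the rank should be $2$, leaving only $w_{(4)}^{(3)}-w_{(4)}^{(2)}$. No $(3,1)$-consequences need to be listed, since $W_d(3,1)$ is already zero.

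Case (vi) comes from the substitution $w_{(2,1)}^{(1)}(y_1,y_1z_1,z_1,y_1z_1)=y_1^2z_1(y_1-z_1)=w_{(4)}^{(3)}-w_{(4)}^{(2)}$, and $A$ shows that nothing stronger follows. Cases (vii) and (viii) are the same argument one degree higher, using $w_{(5)}^{(i)}=w_{(5)}^{(4)}$. Under that identification, $\gamma_1w_{(4)}^{(3)}+\gamma_2w_{(4)}^{(2)}$ multiplied by $y_1$ gives $(\gamma_1+\gamma_2)w_{(5)}^{(4)}$. On the other hand, $w_{(4)}^{(3)}-w_{(4)}^{(2)}$ vanishes on $A$, so it has no nonzero degree-$5$ consequence, because $P_5$ is spanned by an element that is nonzero on $A$. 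Case (ix) is simply $y_1w_{(n)}^{(n-1)}=w_{(n+1)}^{(n)}$.

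The main obstacle is the rank computation in (iv)/(v) and (vii), together with checking that every degree-$(n+1)$ consequence really is a linear combination of the one-sided products and the substitutions listed in Proposition \ref{consequences of degree n+1}. For $\lambda=(4)$ I would also verify the substitution consequence $\Delta_1(y_1z_1,\cdot)$ explicitly, to make sure it introduces nothing new.
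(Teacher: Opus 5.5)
Your proposal is correct and follows the paper's proof essentially step by step. For (iv) and (v) you use the same three degree-$4$ consequences of $w_{(3)}$ together with the relation $w_{(4)}^{(3)}-w_{(4)}^{(1)}=0$, giving a $4\times 3$ matrix of rank $3$ or $2$ according as $\gamma_1+\gamma_2\neq 0$ (the $\Delta_1(y_1z_1,\cdot)$ row minus the two multiplication rows is $(0,\gamma_1+\gamma_2,0)$); for (vii) and (viii) you use left multiplication by $y_1$ with the identification $w_{(5)}^{(i)}=w_{(5)}^{(4)}$, and the algebra $A$ with $a^2=a$ to exclude further consequences. Your extra appeals to $A$ in (iii) and (vi), and your remark that $(3,1)$-consequences can be ignored because $W_d(3,1)=0$ in $G_d({\mathfrak V}_{\beta})$, only make explicit what the paper leaves implicit.
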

\vfill\eject
\phantom{xxx}
\vskip1truecm
\[
\begin{picture}(80,80)
\put(0,0){\circle*{3}} \put(0,20){\circle*{3}}
\put(0,40){\circle*{3}} \put(0,60){\circle*{3}}
\put(0,80){\circle*{3}} \put(0,100){\circle*{3}}
\put(0,105){\circle*{1}} \put(0,110){\circle*{1}}
\put(0,115){\circle*{1}}

\put(-20,40){\circle*{3}} \put(-20,60){\circle*{3}}
\put(20,40){\circle*{3}} \put(20,60){\circle*{3}}
\put(40,40){\circle*{3}} \put(40,60){\circle*{3}}

\put(0,0){\vector(0,2){20}}\put(-12,-3){\tiny{$f_1$}}
\put(0,20){\vector(0,2){20}}\put(-12,17){\tiny{$f_2$}}
\put(0,40){\vector(0,2){20}}\put(-10,34){\tiny{$f_3$}}
\put(0,60){\vector(0,2){20}}\put(-10,62){\tiny{$f_4$}}
\put(0,80){\vector(0,2){20}}\put(-12,77){\tiny{$f_5$}}
\put(-12,97){\tiny{$f_6$}}

\put(60,20){\circle*{3}} \put(60,40){\circle*{3}}

\put(0,0){\vector(3,1){60}}
\put(0,20){\vector(-1,1){20}} \put(0,20){\vector(1,1){20}} \put(0,20){\vector(2,1){40}} \put(0,20){\vector(3,1){60}}

\put(60,20){\vector(-2,1){40}} \put(60,20){\vector(0,1){20}}

\put(-20,40){\line(60,0){60}} \put(-20,60){\line(60,0){60}}

\put(-20,40){\vector(0,1){20}} \put(20,40){\vector(0,1){20}} \put(40,40){\vector(0,1){20}} \put(40,40){\vector(-2,1){40}}
\put(40,40){\vector(-3,1){60}}

\put(-20,40){\vector(1,1){20}} \put(-20,40){\vector(2,1){40}} \put(-20,40){\vector(3,1){60}}

\put(0,40){\vector(-1,1){20}} \put(0,40){\vector(1,1){20}}
\put(0,40){\vector(2,1){40}}
\put(40,40){\vector(-1,1){20}}

\put(-20,60){\vector(1,1){20}} \put(40,60){\vector(-2,1){40}}

\put(-30,34){\tiny{$j_3$}} \put(-30,62){\tiny{$j_4$}}

\put(65,18){\tiny{$h_2$}}

\put(20,34){\tiny{$i_3$}} \put(20,62){\tiny{$i_4$}}

\put(42,34){\tiny{$g_3$}} \put(42,62){\tiny{$g_4$}}
\put(65,34){\tiny{$h_3$}}

\put(-100,-20){\tiny{$f_n=\omega_{(n)}^{(n-1)}$, $g_n=\omega_{(n)}^{(n-2)}$}, $h_n=\omega_{(n-1,1)}^{(n-2)}$, $i_n=\omega_{(n)}^{(n-1)}-\omega_{(n)}^{(n-2)}$,}

\put(-50,-35){\tiny{$j_n=\gamma_1 \omega_{(n)}^{(n-1)}+\gamma_2 \omega_{(n-2)}^{(1)}$, $\gamma_1+\gamma_2 \neq 0$}}

\put(-10,-50){\tiny{$\lambda=(2,1)$, Case 5}}
\end{picture}
\]
\vskip2truecm

\begin{proof}
The cases (i), (ii) and (iii) are as in the case $\beta_1\beta_2(\beta_1+\beta_2)(\beta_1-\beta_2)\not=0$.
The cases (vi) and (ix) is also clear.

(iv) and (v) The consequences of $w_{(3)}=0$ are
\[
y_1w(3)=\gamma_1w_{(4)}^{(3)}+\gamma_2w_{(4)}^{(2)}=0,\;w(3)z_1=\gamma_1w_{(4)}^{(2)}+\gamma_2w_{(4)}^{(1)}=0,
\]
\[
\Delta_1(y_1z_1,w(3))=\gamma_1w_{(4)}^{(3)}+2(\gamma_1+\gamma_2)w_{(4)}^{(2)}+\gamma_2w_{(4)}^{(1)}=0.
\]
Together with the identity $w_{(4)}^{(3)}-w_{(4)}^{(1)}=0$ we obtain a system with matrix
\[
\left(\begin{matrix}1&0&-1\\
\gamma_1&\gamma_2&0\\
0&\gamma_1&\gamma_2\\
\gamma_1&2(\gamma_1+\gamma_2)&\gamma_2\end{matrix}\right).
\]
The rank of the matrix is equal to 3 if $\gamma_1+\gamma_2\not=0$ and hence $w_{(4)}^{(3)}=w_{(4)}^{(1)}=0$.
When $\gamma_1+\gamma_2=0$ the rank is equal to 2 and the system has the only solution $w_{(4)}^{(3)}-w_{(4)}^{(1)}=0$.

(vii) and (viii) We have $y_1w_{(4)}=\gamma_1w_{(5)}^{(3)}+\gamma_2w_{(5)}^{(2)}=0$. Since $w_{(5)}^{(4)}=w_{(5)}^{(3)}=w_{(5)}^{(2)}$
if $\gamma_1+\gamma_2\not=0$ we derive $w_{(5)}^{(4)}=w_{(5)}^{(3)}=w_{(5)}^{(2)}=0$. If $\gamma_1+\gamma_2=0$, the polynomial identity $w_{(5)}^{(4)}=0$
does not follow from $w_{(4)}^{(3)}-w_{(4)}^{(2)}=0$ because $w_{(5)}^{(4)}$ is not zero in the one-dimensional algebra $A$ and
$w_{(4)}^{(3)}-w_{(4)}^{(2)}=0$ is an identity there.
\end{proof}

\section{Varieties with distributive lattice of subvarieties}
As we mentioned in the introduction the lattice $L({\mathfrak W})$ of the subvarieties of the variety $\mathfrak W$ is distributive if and only if
all multiplicities $m(\lambda)$ in the cocharacter sequence $\chi_n({\mathfrak W})$ satisfy $m(\lambda)\leq 1$.

\begin{theorem}\label{distributive lattice}
A variety $\mathfrak W$ of bicommutative algebras has a distributive lattice of subvarieties if and only it it satisfies the identities
\[
\alpha_1x_1(x_1x_1)+\alpha_2(x_1x_1)x_1=0,\,\beta_1x_1[x_1,x_2]+\beta_2[x_1,x_2]x_1=0
\]
for some $(\alpha_1,\alpha_2),(\beta_1,\beta_2)\not=(0,0)$.
\end{theorem}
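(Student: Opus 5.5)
Both directions go through the criterion recalled above: $L({\mathfrak W})$ is distributive if and only if every multiplicity in $\chi_n({\mathfrak W})$ is at most $1$.

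\emph{Necessity.} Suppose $L({\mathfrak W})$ is distributive. Then $m(3)\leq 1$ and $m(2,1)\leq 1$ in $\chi_3({\mathfrak W})$. By (\ref{decomposition degree 3}) both multiplicities equal $2$ in $\chi_3({\mathfrak B})$. So the T-ideal of $\mathfrak W$ contains a nonzero highest weight vector in $W_d(3)$ and one in $W_d(2,1)$ inside $G_d$. By Lemma \ref{hwv of G} these are exactly the polynomials (\ref{PI (3) in G}) and (\ref{PI (2,1) in G}) for some nonzero $(\alpha_1,\alpha_2)$ and $(\beta_1,\beta_2)$. Hence $\mathfrak W$ satisfies both identities in the statement.

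\emph{Sufficiency.} Suppose $\mathfrak W$ satisfies both identities. Then ${\mathfrak W}\subseteq {\mathfrak U}_{\alpha}\cap{\mathfrak V}_{\beta}$. Since cocharacter multiplicities can only decrease when passing to a subvariety, it suffices to show that ${\mathfrak U}_{\alpha}\cap{\mathfrak V}_{\beta}$ has all multiplicities $\leq 1$. I would argue degree by degree.
\begin{itemize}
\item For $n\leq 2$ this is clear.
\item For $n=3$ the two identities remove one copy each of $M(3)$ and $M(2,1)$ from (\ref{decomposition degree 3}), leaving multiplicities at most $1$.
\item For $n\geq 4$, by Corollary \ref{bounds for cocharacters} only $\lambda=(n),(n-1,1),(n-2,2)$ can occur. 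The multiplicity of $\lambda=(n-2,2)$ is already bounded by the ${\mathfrak U}_{\alpha}$ computations. Propositions \ref{chi for alpha2=0} and \ref{chi for alpha1+alpha2=0}, together with cases 1 and 5 of the ${\mathfrak U}_{\alpha}$ analysis, show that $\chi_n({\mathfrak U}_{\alpha})$ is multiplicity free for every $n\geq 4$. The case $\alpha_1=0$ is symmetric to $\alpha_2=0$.
\end{itemize}
Hence $\chi_n({\mathfrak U}_{\alpha})$ already has all multiplicities $\leq 1$ for $n\geq 4$, and so does $\chi_n$ of the intersection.

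Thus the whole weight of sufficiency rests on ${\mathfrak U}_{\alpha}$: from degree $4$ on, the single identity of type $(3)$ already forces multiplicities $\leq 1$. The identity of type $(2,1)$ is needed only to cut $m(2,1)$ in degree $3$ from $2$ to $1$.

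The main point to check is that each listed case for ${\mathfrak U}_{\alpha}$ really gives multiplicity at most $1$ for every $n$. Proposition \ref{chi4 for (3)} does this in degree $4$. For $n\geq 5$, the propositions of the $S_n$-module subsection handle the cases $\alpha_2=0$ (and symmetrically $\alpha_1=0$), $\alpha_1+\alpha_2=0$, and $\alpha_1=\alpha_2$. In the generic case $\chi_n({\mathfrak U}_\alpha)=0$ for $n\geq 4$. These cases exhaust all $(\alpha_1,\alpha_2)\neq(0,0)$, so the argument is complete.
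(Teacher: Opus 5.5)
Your proposal is correct and follows essentially the same route as the paper. Both arguments rest on the Section 3 computations showing that $\chi_n({\mathfrak U}_{\alpha})$ is multiplicity free for $n\geq 4$, so the only multiplicity exceeding $1$ is $m(2,1)=2$ in degree $3$, which the $(2,1)$-identity removes. Your necessity argument, via the degree-$3$ decomposition and Lemma~\ref{hwv of G}, spells out what the paper leaves implicit.
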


\begin{proof}
By the results of Section 3 for the varieties ${\mathfrak U}_{\alpha}$ the only case with $m(\lambda)>1$ is $m(2,1)=2$
and this means that ${\mathfrak U}_{\alpha}$ does not satisfy an identity of the form $\beta_1x_1[x_1,x_2]+\beta_2[x_1,x_2]x_1=0$.
If we add this identity we obtain $m(\lambda)\leq 1$ for all $\lambda\vdash n$, $n=1,2,\ldots$.
\end{proof}

\begin{example}
The results in Section 3 allow to describe the lattices of all varieties of bicommutative algebras with distributive lattice of subvarieties.
We choose the highest weight vector $w_{(2,1)}$ in the form $\gamma_1w_{(2,1)}^{(1)}+\gamma_2w_{(2,1)}^{(0)}$ with the condition
$\beta_1\gamma_2-\beta_2\gamma_1\not=0$ which means that it is not an identity in the variety ${\mathfrak U}_{\alpha}\cap{\mathfrak V}_{\beta}$.
The results are given in the figures below.
\vskip-1truecm
\hskip3truecm
$
\begin{picture}(80,120)
\put(0,0){\circle*{3}} \put(0,20){\circle*{3}}
\put(0,40){\circle*{3}}

\put(20,20){\circle*{3}} \put(30,40){\circle*{3}}

\put(0,0){\vector(0,1){20}}\put(-12,-3){\tiny{$f_1$}}
\put(0,20){\vector(0,1){20}}\put(-12,17){\tiny{$f_2$}}
\put(-12,37){\tiny{$f_3$}}

\put(0,0){\vector(1,1){20}}\put(25,17){\tiny{$g_2$}}
\put(35,37){\tiny{$k_3$}}

\put(0,20){\vector(3,2){30}}

\put(20,20){\vector(1,2){10}} \put(20,20){\vector(-1,1){20}}

\put(-70,-20){\tiny{$f_n=\omega_{(n)}^{(n-1)}$, $g_2=\omega_{(1^2)}$, $k_3=\gamma_1\omega_{(2,1)}^{(1)}+\gamma_2\omega_{(2,1)}^{(0)}$}}

\put(-30,-35){\tiny{$\alpha_1\alpha_2(\alpha_1+\alpha_2)(\alpha_1-\alpha_2)\neq0$}}
\end{picture}
$
\hskip4truecm
$
\begin{picture}(80,120)
\put(0,0){\circle*{3}} \put(0,20){\circle*{3}}
\put(0,40){\circle*{3}}

\put(20,20){\circle*{3}} \put(30,40){\circle*{3}}

\put(0,0){\vector(0,1){20}}\put(-12,-3){\tiny{$f_1$}}
\put(0,20){\vector(0,1){20}}\put(-12,17){\tiny{$f_2$}}
\put(-12,37){\tiny{$f_3$}}

\put(0,0){\vector(1,1){20}}\put(25,17){\tiny{$g_2$}}
\put(35,37){\tiny{$k_3$}}

\put(0,20){\vector(3,2){30}}

\put(20,20){\vector(1,2){10}} \put(20,20){\vector(-1,1){20}}

\put(-70,-20){\tiny{$f_n=\omega_{(n)}^{(1)}$, $g_2=\omega_{(1^2)}$, $k_3=\gamma_1\omega_{(2,1)}^{(0)}+\gamma_2\omega_{(2,1)}^{(1)}$}}

\put(-10,-35){\tiny{$\alpha_2 = 0$, $\beta_2\neq 0$}}
\end{picture}
$

\hskip3truecm
$
\begin{picture}(80,120)
\put(0,0){\circle*{3}} \put(0,20){\circle*{3}}
\put(0,40){\circle*{3}} \put(0,60){\circle*{3}}
\put(0,65){\circle*{1}} \put(0,70){\circle*{1}}
\put(0,75){\circle*{1}}

\put(30,20){\circle*{3}} \put(30,40){\circle*{3}}
\put(30,60){\circle*{3}}
\put(30,65){\circle*{1}} \put(30,70){\circle*{1}}
\put(30,75){\circle*{1}}

\put(0,0){\vector(0,2){20}}\put(-12,-3){\tiny{$f_1$}}
\put(0,20){\vector(0,2){20}}\put(-12,17){\tiny{$f_2$}}
\put(0,40){\vector(0,2){20}}\put(-12,37){\tiny{$f_3$}}
\put(-12,57){\tiny{$f_4$}}

\put(0,0){\vector(3,2){30}}\put(35,19){\tiny{$g_2$}}
\put(30,20){\vector(0,2){20}}\put(35,39){\tiny{$g_3$}}
\put(30,40){\vector(0,2){20}}\put(35,59){\tiny{$g_4$}}

\put(0,20){\vector(3,2){30}} \put(0,40){\vector(3,2){30}}
\put(30,20){\vector(-3,2){30}} \put(30,40){\vector(-3,2){30}}

\put(-30,-20){\tiny{$f_n=\omega_{(n)}^{(1)}$, $g_n=\omega_{(n-1,1)}^{(0)}$}}

\put(-10,-35){\tiny{$\alpha_2=0$, $\beta_2=0$}}
\end{picture}
$
\hskip3truecm
$
\begin{picture}(80,100)
\put(0,0){\circle*{3}} \put(0,20){\circle*{3}}
\put(0,40){\circle*{3}} \put(0,60){\circle*{3}}
\put(0,65){\circle*{1}} \put(0,70){\circle*{1}}
\put(0,75){\circle*{1}}

\put(20,20){\circle*{3}} \put(30,40){\circle*{3}}

\put(0,0){\vector(0,1){20}}\put(-12,-3){\tiny{$f_1$}}
\put(0,20){\vector(0,1){20}}\put(-12,17){\tiny{$f_2$}}
\put(0,40){\vector(0,1){20}}
\put(-12,37){\tiny{$f_3$}}
\put(-12,57){\tiny{$f_4$}}

\put(0,0){\vector(1,1){20}}\put(25,19){\tiny{$g_2$}}
\put(20,20){\vector(1,2){10}}\put(35,39){\tiny{$k_3$}}

\put(0,20){\vector(3,2){30}}

\put(-90,-20){\tiny{$f_n=\omega_{(n)}^{(1)}$, $g_2=\omega_{(1^2)}$, $k_3=\gamma_1\omega_{(2,1)}^{(0)}+\gamma_2\omega_{(2,1)}^{(1)}$}}

\put(-10,-35){\tiny{$\alpha_1+\alpha_2=0$}}

\end{picture}
$
\[
\begin{picture}(80,120)
\put(0,0){\circle*{3}} \put(0,20){\circle*{3}}
\put(0,40){\circle*{3}}

\put(20,20){\circle*{3}} \put(30,40){\circle*{3}}

\put(0,0){\vector(0,1){20}}\put(-12,-3){\tiny{$f_1$}}
\put(0,20){\vector(0,1){20}}\put(-12,17){\tiny{$f_2$}}
\put(-12,37){\tiny{$f_3$}}

\put(0,0){\vector(1,1){20}}\put(25,17){\tiny{$g_2$}}
\put(35,37){\tiny{$k_3$}}

\put(0,20){\vector(3,2){30}}

\put(20,20){\vector(1,2){10}} \put(20,20){\vector(-1,1){20}}

\put(-70,-20){\tiny{$f_n=\omega_{(n)}^{(n-1)}$, $g_2=\omega_{(1^2)}$, $k_3=\gamma_1\omega_{(2,1)}^{(0)}+\gamma_2\omega_{(2,1)}^{(1)}$}}

\put(-10,-35){\tiny{$\alpha_1-\alpha_2 = 0$}}
\end{picture}
\]
\end{example}

\end{document}